\documentclass[11pt]{article}
\usepackage{amsthm}
\usepackage[utf8]{inputenc}
\usepackage[english]{babel}
\usepackage{amsmath}
\usepackage{amsfonts}
\usepackage{amssymb}
\usepackage{graphicx}
\usepackage{float} 
\usepackage{subcaption}
\usepackage{stackrel}
\usepackage[authoryear,round]{natbib}
\usepackage{vmargin}
\usepackage{enumerate}
\usepackage{textcomp}
\usepackage[dvipsnames]{xcolor}
\usepackage{hyperref}
\usepackage{lineno}
\usepackage{comment}

\newtheorem{prop}{Proposition}
\newtheorem{lemma}{Lemma}

\newtheorem{corollary}{Corollary}
\newtheorem{theorem}{Theorem}
\newtheorem{remark}{Remark}

\numberwithin{equation}{section}
\usepackage{xcolor}

\setmargins{2.5cm} 
{1.5cm} 
{16.5cm} 
{23.42cm} 
{0pt} 
{2.75cm} 
{0pt} 
{1cm} 

\title{}
\author{Alexander Abreu, Héctor Araya, Lisandro Fermín, Johanna Garzón and Soledad Torres }

\usepackage{authblk}
\title{Euler Scheme for Stochastic Functional Differential Equations Driven by Fractional Brownian Motion via Fractional Calculus Techniques}

\author[1]{Alexander Abreu \thanks{pedro.abreu@postgrado.uv.cl}}
\author[2]{Héctor Araya \thanks{hector.araya@uai.cl}}
\author[3]{Lisandro Fermin \thanks{lisandro.fermin@univ-amu.fr}}
\author[4]{Johanna Garzón \thanks{mjgarzonm@unal.edu.co}}
\author[1]{Soledad Torres \thanks{soledad.torres@uv.cl}}

\affil[1]{CIMFAV - Facultad de Ingeniería. Universidad de Valparaíso, Valparaíso, Chile.}
\affil[2]{Faculty of Engineering \& Sciences. Universidad Adolfo Ibáñez, Viña del Mar, Chile}
\affil[3]{Aix-Marseille School of Economics, CNRS, Aix-Marseille University, Marseille, France.}
\affil[4]{Departamento de Matemáticas. Universidad Nacional de Colombia, Bogotá, Colombia}

\usepackage[final]{microtype}

\date{}

\begin{document}
\maketitle
\begin{abstract}
We study a stochastic functional differential equation (SFDE) with memory driven by a fractional Brownian motion (fBm) with Hurst parameter $H>1/2$. An Euler-type numerical scheme is proposed and analyzed under suitable regularity conditions on the drift and diffusion coefficients using tools from fractional calculus. We prove the convergence of the scheme and derive the corresponding rate in terms of the discretization step. Numerical simulations illustrate the theoretical results and confirm the accuracy of the proposed method.
\end{abstract}

{\bf 2010 AMS Classification Numbers:} 
Primary 60H10, 60G22; 
Secondary 65C30, 65L20, 60H35, 34K50.
	\vskip0.3cm
	{\bf Key Words and Phrases}:  Euler-Maruyama method, fractional Brownian motion, stochastic functional differential equations, generalized Riemann–Stieltjes integral.


\section{Introduction}
Stochastic differential equations with delay or memory have been widely used to model dynamical systems whose future evolution depends not only on the present state but also on their past history. Such equations arise naturally in several areas, including mathematical biology, economics, engineering, and physics, where memory effects play a fundamental role (see, for example, \citep{Mohammed1984, Mao1997}). In particular, stochastic functional differential equations with memory allow us to describe systems with long-range historical dependence, providing a more realistic framework for modeling persistent phenomena.

From a mathematical perspective, the presence of memory introduces a non-Markovian structure that significantly complicates both the analysis of the solutions and the design of numerical approximation schemes, since the evolution depends on the entire past trajectory.

In the classical setting, randomness is typically modeled by standard Brownian motion, leading to Markovian stochastic differential equations and well-developed theory from both analytical and numerical perspectives, see \citet{KloedenPlaten1992}. However, empirical evidence in many applications reveals the presence of long-range dependence and temporal correlations that cannot be adequately captured by classical Brownian noise. In this context, fractional Brownian motion (fBm), introduced by \citet{MandelbrotVanNess1968}, has emerged as a fundamental tool for modeling processes with long-range and memory dependence.

The fractional Brownian motion is a centered Gaussian process characterized by a Hurst parameter $H \in (0,1)$, whose sample paths are Hölder continuous of any order strictly less than $H$. In the regime $H > 1/2$, the process exhibits positively correlated increments, making it particularly suitable for modeling persistent behavior, \citep{MandelbrotVanNess1968, Biagini2008}. Nevertheless, when $H \neq 1/2$, the fractional Brownian motion is not a semimartingale, which prevents the direct application of the calculus It\^o and introduces substantial analytical and numerical challenges.

For stochastic differential equations driven by fractional Brownian motion with $H > 1/2$, a widely used approach consists in defining the stochastic integral in the sense of generalized Lebesgue-Stieltjes or Young integrals, exploiting the Hölder regularity of the trajectories, \citep{Young1936, Zahle1998, HuNualart2009}. This framework has allowed the establishment of uniqueness and existence results for various classes of stochastic differential equations, including functional equations with memory and delay, under suitable regularity assumptions for the coefficients, \citep{FerranteRovira2006, BH2011}.

In this article, we study a class of stochastic functional differential equations with distributed memory driven by fractional Brownian motion. The evolution of the system depends not only on its present state but also on a functional of its past trajectory over a finite delay interval. This hereditary structure is modeled through a distributed delay term, allowing us to capture finite-memory effects within a non-Markovian stochastic framework.
The precise mathematical formulation of the model is presented in Section \ref{sec:SFDE}.

The combination of fractional noise and distributed delay creates a challenging setting for numerical approximation, since temporal roughness and non-local memory effects must be controlled simultaneously.

Equations of the form (\ref{eq1})--(\ref{eq2}), involving distributed memory terms, have been previously studied in the context of stochastic functional differential equations driven by standard Brownian motion. In particular, \citet{Buckwar2000}, \citet{Buckwar2004} investigated stochastic functional differential equations with distributed delay and proposed the $\theta$-Maruyama scheme for their numerical approximation. Strong convergence results were established under suitable Lipschitz-type conditions on the coefficients, highlighting the impact of the memory term on both the stability and convergence properties of the numerical method. 

From a numerical point of view, the approximation of solutions to stochastic differential equations driven by fractional Brownian motion has attracted increasing attention over the past decades. For equations without delay, several Euler- and Milstein-type schemes have been proposed and analyzed, producing strong convergence results and explicit convergence rates under different assumptions on coefficients and the Hurst parameter \citet{NeuenkirchNourdin2007, GyongyMillet2005}. In particular, in the case $H > 1/2$, the convergence of the Euler scheme was systematically studied in \citet{Mishura2008}, who proved strong convergence under appropriate regularity conditions using Riemann-Stieltjes integration and Hölder-type estimates. Later, Neuenkirch and Nourdin obtained further refinements and optimal convergence rates \citet{NeuenkirchNourdin2007}.

Despite these advances, most of the available numerical results concern stochastic differential equations without memory or, at most, with finite delay. The numerical analysis of stochastic functional differential equations with hereditary structures driven by fractional Brownian motion remains comparatively underdeveloped. The presence of memory significantly complicates both the analytical and numerical treatment of such systems, since the evolution of the solution depends on its past trajectory rather than on its current state alone. Existing work in this area has focused mainly on qualitative properties of solutions, such as existence, uniqueness, and stability, whereas rigorous convergence analyzes of numerical schemes and the derivation of explicit convergence rates have received less attention \citet{Caraballo2011}.

Motivated by these observations, the main objective of this paper is to propose and analyze a Euler type numerical scheme for equation (\ref{eq1})-(\ref{eq2}). In contrast to existing results for memoryless equations, such as those obtained by \citet{Mishura2008}, the proposed scheme explicitly accounts for the memory structure of the system. Under suitable regularity and growth conditions on the drift and diffusion coefficients, we establish strong convergence of the numerical method toward the exact solution and derive its corresponding rate of convergence. The analysis relies on tools from fractional calculus, Hölder-type estimates, and inequalities adapted to integrals with respect to fractional Brownian motion, \citep{Biagini2008, HuNualart2009}.

The proof relies on pathwise Hölder estimates together with careful control of the delay term along the numerical trajectory.

The results obtained in this article provide the main analytical basis for the proposed model and for the study of the numerical method considered throughout the paper.
In order to guide the reader through the argument, the presentation is organized into three parts.
We first establish the existence and uniqueness of the solution under the stated assumptions and then derive the corresponding estimates required for the subsequent analysis (Theorem \ref{texistenciax}, Proposition \ref{cotasapriori}, Appendix \ref{Tec-Est-Sol}).  These estimates play a fundamental role, since they allow us to control the behavior of the exact solution.
In the second part  (Section \ref{sec:num}), we prove several key lemmas providing a priori estimates for the numerical method, which ensure the boundedness of the approximating sequence and constitute the main tool for the convergence analysis. 
In the third part (Section \ref{convergence}), we use these bounds to prove the main theorem of the paper, which establishes the convergence of the scheme under the assumptions imposed on the coefficients. Each step is written in such a way that the role of the hypotheses becomes explicit, which will be important for possible generalizations of the model. In order to keep the exposition clear and readable, some technical proofs are postponed to the appendices. In particular, the proofs of the auxiliary lemmas used in the convergence arguments, as well as the a priori estimate for the exact solution of the equation, are presented in the supplementary section. This organization allows us to emphasize the main ideas of the proof while keeping the technical details available to the interested reader.

The remainder of the paper is organized as follows. Section~ \ref{SFDE} introduces the main preliminaries and key results on stochastic integration that will be used throughout the paper. In Section~ \ref{Seuler}, we present the Euler scheme for SFDE \eqref{eq1}.
Section~\ref{sec:num} is devoted to the numerical analysis, where we study the convergence rate of the Euler scheme for SFDEs.
Finally, in Section \ref{numerical} numerical simulations are provided to illustrate the theoretical results and assess the accuracy and efficiency of the proposed numerical scheme.

\section{Analytical Framework for the SFDE}
\label{SFDE}

In this section, we introduce the analytical framework used throughout the paper. 
We first fix the functional setting and the notion of stochastic integration with respect to fractional Brownian motion, and then present the stochastic functional differential equation under consideration together with the assumptions on its coefficients. 
These elements will provide the basis for both the well-posedness of the model and the subsequent numerical analysis.

Throughout the paper, we work on a complete probability space 
$(\Omega, \mathcal{F}, \{\mathcal{F}_t\}_{t\ge 0}, P)$ with filtration 
$\{\mathcal{F}_t\}_{t\ge 0}$ satisfying the usual conditions.

For $\lambda333\in (0,1)$, let $C^\lambda([a, b])$ be the space of continuous functions 
$\lambda$ - H\"older $f : [a, b] \to \mathbb{R}$. If $f\in C^\lambda$, we define
\[
\|f\|_\lambda:= \|f\| + \sup_{a\leq s< t \leq b} 
\frac{|f(t) - f (s)|}{(t - s)^{\lambda}},
\qquad 
\|f\|:= \sup_{a\leq  t \leq b} |f(t)|.
\]

These Hölder spaces naturally arise in the analysis of equations driven by fractional Brownian motion, since the trajectories of $B^H$ and of the solution itself are controlled in Hölder norms.

We denote by $C_r=C([-r, 0])$ the space of continuous functions 
$\xi : [-r, 0] \to \mathbb{R}$. 
For a functional coefficient $\sigma(t, x, \xi)$, we define 
$D_\xi \sigma(t, x, \xi_0)$ as the derivative Fr\'echet of 
$\sigma(t, x,\cdot)$ in the variable $\xi=\xi_0$.

The use of Fréchet derivatives reflects the functional nature of the model, where the coefficients depend on entire segments of the past trajectory rather than on pointwise values only.

\subsection{The Riemann--Stieltjes integral}

We briefly recall the definition of the pathwise stochastic integral with respect to fractional Brownian motion, which will be used throughout the paper. 
In the regime $H>1/2$, this integral can be defined deterministically using fractional calculus, which allows us to exploit the Hölder regularity of the trajectories.

The generalized Riemann--Stieltjes integral, introduced by 
\citet{Zahle1998}, with respect to fBm $B^H$ is defined by
\begin{equation}
\label{eqdefintegral}
\int_a^b f dB^H
=
\int_a^b
(D_{a+}^{\beta}f)(s)
(D_{b-}^{1-\beta}B^H_{b-})(s)
ds,
\end{equation}
where $D_{a+}^{\beta}$ and $D_{b-}^{1-\beta}$ are the left and right sided 
fractional derivatives of orders $\beta$ and $1 -\beta$, respectively, given by
\begin{equation*}
(D_{a+}^{\beta}f)(s)
=
\frac{1}{\Gamma(1-\beta)}
\left[
\frac{f(s)}{(s-a)^{\beta}}
+
\beta\int_a^s
\frac{f(s)-f(u)}{(s-u)^{1+\beta}}
du
\right]
I_{(a,b)}(s),
\end{equation*}
and  
\begin{equation*}
(D_{b-}^{1-\beta}B^H_{b-})(s)
=
\frac{\exp^{-i\pi\beta}}{\Gamma(\beta)}
\left[
\frac{B^H_{b-}(s)}{(b-s)^{1-\beta}}
+
(1-\beta)\int_s^b
\frac{B^H_{b-}(s)-B^H_{b-}(u)}{(u-s)^{2-\beta}}
du
\right]
I_{(a,b)}(s),
\end{equation*}
with the convention
\[
B^H_{b-}(s)
=
(B^H_s-B^H_b)I_{(a,b)}(s).
\]

If $f\in C^\nu([a,b])$ for $\nu+H>1$, it is shown e.g. in 
\citep{Nualart2003} that the pathwise fractional integral 
(\ref{eqdefintegral}) exists for any $\beta\in (1-H, \nu)$. 
Moreover, for $\beta<1/2$, the following estimate holds:
\begin{equation}
\label{eqcotaintegral}
\left|\int_a^b f dB^H\right|
\leq 
C(\omega)
\left[
\int_a^b\frac{|f(s)|}{(s-a)^{\beta}}ds
+
\int_a^b\int_a^s
\frac{|f(s)-f(u)}{(s-u)^{\beta+1}}
duds
\right],
\end{equation}
where
\begin{equation}
\label{eqdefc0}
C(\omega)
=
C\cdot 
\sup_{a<s<b}
|D_{b-}^{1-\beta}B^H_{b-}(s)|,
\end{equation}
\(
1-H < \beta < \frac{1}{2}.
\)

Estimate \eqref{eqcotaintegral} will play a central role in the sequel, as it allows us to control stochastic integrals in terms of Hölder-type norms of the integrand. This type of bound is the key analytical tool that replaces classical Itô isometry arguments in the fractional setting.

\subsection{The stochastic functional differential equation }\label{sec:SFDE}

We study a class of stochastic functional differential equation (SFDE) with memory driven by fractional Brownian motion. More precisely, we consider the equation
\begin{equation}\label{eq1}
X(t)=\left\{
\begin{array}{lc}
X(0) + \displaystyle\int_0^t b(s,X(s),Y(s))\,{d}s
+ \displaystyle\int_0^t \sigma(s,X(s),Y(s))\,{d}B^H(s),     &  t \in [0,T] \\
\phi(t),     & t \in [-r,0]
\end{array}
 \right.
\end{equation}
where $B^H=(B^H(t))_{t \ge 0}$ is a fractional Brownian motion with Hurst parameter $H> 1/2$. The stochastic integral with respect to $B^H$ is understood in the sense of the generalized Lebesgue-Stieltjes integral as introduced by \citet{Zahle1998}. The drift and diffusion coefficients $b,\sigma : [0,T] \times \mathbb{R} \times \mathbb{R} \to \mathbb{R}$ are assumed to be continuous functions, and the initial condition $\phi : [-r,0] \to \mathbb{R}$ is assumed to be Hölder continuous.
The memory effect is modeled through the functional $Y(t)$ defined by
\begin{equation}\label{eq2}
Y(t) = \int_{-r}^0 K(t,s,X(t+s))\,{d}s
     = \int_{t-r}^t K(t,s-t,X(s))\,{d}s,
\qquad t \ge 0,
\end{equation}
where the kernel $K : [0,T] \times [-r,0] \times \mathbb{R} \to \mathbb{R}$ is a continuous function. This formulation allows us to capture hereditary effects over a finite window of the past while maintaining a functional dependence on the entire trajectory segment.

We consider the following assumptions on the coefficients $b$, $\sigma$, the initial condition $\phi$ and the Kernel $K$ in (\ref{eq1}) and (\ref{eq2}).

The assumptions imposed throughout the paper are standard in the analysis of stochastic functional differential equations, but we briefly comment on their role and necessity. The Lipschitz-type conditions guarantee the well-posedness of the equation and allow us to obtain uniform bounds, while the growth conditions ensure that the solution does not explode in finite time. \\
The regularity assumptions on the coefficients are also required in order to apply the stochastic calculus tools used in the proofs, in particular when dealing with the memory term and the fractional noise. 

{\bf Hypothesis $\phi$} 
There exists a positive constant $L$ such that  
\begin{align}
    |\phi(t) - \phi(s)| &\leq L|t-s|^{H-\rho} \label{eqphi}, \quad 0 <\rho <H.
\end{align}

{\bf Hypothesis 1} 
There exist positive constants $L_1, L_2$ and $L_3$ such that the following properties hold.
\begin{align}
    |b(t,x,y)-b(s,x,y)|+|\sigma(t,x,y)-\sigma(s,x,y)| &\leq L_1|t-s|. \label{eq3}\\
    |b(t,x_1,y_1)-b(t,x_2,y_2)|+|\sigma(t,x_1,y_1)-\sigma(t,x_2,y_2)| &\leq L_2(|x_1-x_2|+|y_1-y_2|). \label{eq4}\\
    |b(t,x,y)|+|\sigma(t,x,y)| &\leq L_3(1+|x|+|y|). \label{eq5}
\end{align}

{\bf Hypothesis 2}
The function $\sigma(t, \xi)$ is Fréchet differentiable in the variable $\xi$. Also, there exist positive constants $M_1, M_2$ and $M_3$ such that for all $\xi, \eta \in C_r$ and $t\in [0, T]$, 
\begin{align}
    |D_\xi\sigma(t, \xi)|_{\mathcal{L}(C_r, \mathbb{R})}   &\leq M_1. \nonumber \\
 |D_\xi\sigma(t, \xi) - D_\xi\sigma(t, \eta)|_{\mathcal{L}(C_r, \mathbb{R})}   &\leq M_2\|\xi-\eta\|. \nonumber \\
    |D_\xi\sigma(t, \xi) - D_\xi\sigma(s, \xi)|_{\mathcal{L}(C_r, \mathbb{R})}   &\leq M_3|t-s|. \label{HM2}
\end{align}

{\bf Hypothesis 3} 
The function $K(t,s, \xi)$ is Fr\'echet differentiable in the variable $\xi$. Moreover, there exist positive constants $K_1, K_2$ and $K_3$ such that
\begin{align}
    |K(t,s,x)-K(u,v,x)| +  &\leq K_1(|t-u|+|s-v|). \nonumber \\
    |K(t,s,x)-K(t,s,y)| &\leq K_2|x-y|. \nonumber \\
    |K(t,s,x)| &\leq K_3(1+|x|). \label{HK1}
\end{align}

The following theorem establishes the existence and uniqueness of the solution to (\ref{eq1}) (the proof is analogous to those given in \citep{BH2011} and \citep{DonW2016}), and is therefore left to the reader. 

\begin{theorem}
\label{texistenciax}
Under hypotheses \ref{eqphi} to \ref{HK1}. If $\phi$ is a stochastic process whose trajectories belong to the space $C^{H-\rho}[0,T]$ a.s. for any $0<\rho<H$, then Equation (\ref{eq1}) has a unique solution $X\in C^{H-\rho}[0,T]$ a.s.,
\begin{equation}\label{deltaX}
     |X(u)-X(v)|\leq C(\omega)|u-v|^{H-\rho}.
\end{equation}
\end{theorem}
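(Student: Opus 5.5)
The plan is to follow the pathwise fixed-point strategy of Nualart--R\u{a}\c{s}canu, as adapted to delay equations in \citep{BH2011} and \citep{DonW2016}. Fix $\omega$ outside a null set, so that $B^H(\cdot,\omega)\in C^{H-\rho'}[0,T]$ for every small $\rho'>0$. Fix $\beta\in(1-H,1/2)$ with $\beta<H-\rho$, so that estimate \eqref{eqcotaintegral} holds with a finite constant $C(\omega)$ as in \eqref{eqdefc0}.

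Work in the Banach space $W^{\beta}_0$ of continuous functions on $[0,T]$ with norm
\[
\|f\|_{\beta,\infty}=\sup_{t\le T}\Big(|f(t)|+\int_0^t\frac{|f(t)-f(s)|}{(t-s)^{1+\beta}}ds\Big).
\]
Extend every $f$ by $\phi$ on $[-r,0]$. Define the map
\[
\mathcal{L}(X)(t)=\phi(0)+\int_0^t b(s,X,Y^X)ds+\int_0^t\sigma(s,X,Y^X)dB^H,
\]
where $Y^X$ is given by \eqref{eq2}.

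\textbf{Step 1: the memory functional.} Using Hypothesis 3, prove:
\begin{itemize}
\item $|Y^X(t)|\le rK_3\big(1+\sup_{[-r,t]}|X|\big)$;
\item $|Y^X(t)-Y^X(s)|\le C\big(1+\sup_{[-r,t]}|X|\big)|t-s|$, obtained by splitting off the two boundary pieces of length $|t-s|$ and using the Lipschitz property of $K$ in its first two variables;
\item $|Y^X(t)-Y^{Z}(t)|\le rK_2\sup_{[0,t]}|X-Z|$.
\end{itemize}
Hence $Y^X$ is Lipschitz in time, and the composite coefficients $s\mapsto\sigma(s,X(s),Y^X(s))$ inherit the regularity of $X$. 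Here Hypothesis $\phi$ controls the pieces of the window lying in $[-r,0]$.

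\textbf{Step 2: a priori bound and existence.} Apply \eqref{eqcotaintegral} to the stochastic integral, together with \eqref{eq3}--\eqref{eq5} and Step 1. This gives an estimate of the type
\[
\|\mathcal{L}(X)\|_{\beta,\infty,\lambda}\le C_1+\frac{C_2(\omega)}{\lambda^{1-2\beta}}\|X\|_{\beta,\infty,\lambda},
\]
where $\|\cdot\|_{\beta,\infty,\lambda}$ is the norm with weight $e^{-\lambda t}$. For $\lambda$ large, $\mathcal{L}$ maps a ball into itself. Moreover, the image lies in $C^{1-\beta}\subset C^{H-\rho}$, and this also yields \eqref{deltaX}. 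For existence one can use either of two routes:
\begin{itemize}
\item Schauder's theorem, using the compactness of the embedding of $C^{1-\beta}$ into $W_0^\beta$;
\item a direct contraction argument.
\end{itemize}

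\textbf{Step 3 (main obstacle): uniqueness via a contraction estimate.} This requires bounding the term
\[
\big|\sigma(t,X_t,Y^X_t)-\sigma(s,X_s,Y^X_s)-\sigma(t,Z_t,Y^Z_t)+\sigma(s,Z_s,Y^Z_s)\big|
\]
by a quantity of the form $C|X_t-Z_t-X_s+Z_s|$ plus $C|X_t-Z_t|\big(|t-s|+|X_t-X_s|+|Y_t-Y_s|\big)$. This is where the Fréchet differentiability of Hypothesis 2 is needed: use the mean-value formula with $D_\xi\sigma$, bounded by $M_1$ and Lipschitz with constants $M_2$, $M_3$. The difficulty is that the segment dependence produces terms involving $\sup_{[t-r,t]}|X-Z|$. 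These are handled by noting that $X-Z$ vanishes on $[-r,0]$, so they are dominated by $\sup_{[0,t]}|X-Z|\le\|X-Z\|_{\beta,\infty}$ on $[0,t]$.

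Combining this with the Gronwall-type lemma for the weighted norm, restricted to a ball where the a priori bound from Step 2 holds, gives
\[
\|\mathcal{L}(X)-\mathcal{L}(Z)\|_{\beta,\infty,\lambda}\le\tfrac12\|X-Z\|_{\beta,\infty,\lambda}
\]
for $\lambda$ large. The argument can first be done on short intervals and then iterated up to $T$. Uniqueness follows, and since $\omega$ was arbitrary outside a null set, the solution is unique a.s.
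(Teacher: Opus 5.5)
The paper does not prove this theorem. It only says the argument is analogous to \citep{BH2011} and \citep{DonW2016}, i.e.\ a Nualart--R\u{a}\c{s}canu fixed-point scheme, so your overall route is the intended one. There is, however, a gap at exactly the step you flag as the main obstacle.

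Let $f(\tau)=\sigma(\tau,X(\tau),Y^X(\tau))-\sigma(\tau,Z(\tau),Y^Z(\tau))$ and consider the increment $f(t)-f(s)$. Compared with the non-delay case, the new term in it is the double difference of the memory,
\[
Y^X(t)-Y^Z(t)-Y^X(s)+Y^Z(s)=\int_{-r}^0\big[K(t,a,X(t+a))-K(t,a,Z(t+a))-K(s,a,X(s+a))+K(s,a,Z(s+a))\big]\,da .
\]
Its leading part is the \emph{shifted increment} $\int_{-r}^0|g(t+a)-g(s+a)|\,da$ of $g:=X-Z$. In segment language, it is the first-order term $D_\xi\sigma(t,\eta^t_\lambda)[g_t-g_s]$ of your mean-value formula, where $g_t=g(t+\cdot)$ and $\eta^t_\lambda=Z_t+\lambda g_t$.

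Your displayed bound either omits this term (if $X_t$ means $X(t)$) or contains it only as $\|g_t-g_s\|_{C_r}$ (if $X_t$ is the segment). Your remark that segment terms are dominated by $\sup_{[0,t]}|X-Z|$ covers only the zeroth-order factor $\|g_s\|_{C_r}$ that multiplies $M_2,M_3$. If you estimate the first-order term through the operator norm, by $M_1\sup_{a\in[-r,0]}|g(t+a)-g(s+a)|$, you place the modulus of continuity of $g$ at scale $t-s$ inside $\int_0^t(\cdot)\,(t-s)^{-1-\beta}\,ds$. That Besov-type quantity is not bounded by $C\|g\|_{\beta,\infty}$; it can even be infinite for $g\in W_0^{\beta,\infty}$. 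So the contraction inequality you state does not follow.

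The missing idea is to keep the integral structure. The functional $D_\xi\sigma(t,\eta^t_\lambda)$ does not depend on $s$, and by Riesz it is a measure $\mu_{t,\lambda}$ on $[-r,0]$ of total variation at most $M_1$. For the present model it is $\zeta\mapsto\partial_x\sigma\,\zeta(0)+\partial_y\sigma\int_{-r}^0\partial_xK(t,a,\eta^t_\lambda(a))\,\zeta(a)\,da$. By Fubini the supremum over $a$ then moves outside the singular integral. For fixed $a$, the substitution $w=t+a$ together with $g\equiv 0$ on $[-r,0]$ gives
\[
\int_0^t\frac{|g(t+a)-g(s+a)|}{(t-s)^{1+\beta}}\,ds=\int_a^{w}\frac{|g(w)-g(v)|}{(w-v)^{1+\beta}}\,dv\le\int_{-r}^{w}\frac{|g(w)-g(v)|}{(w-v)^{1+\beta}}\,dv .
\]
So run the fixed point with the $W^{\beta,\infty}$ norm of $X-Z$ on $[-r,T]$; equivalently, add $\sup_w|g(w)|w^{-\beta}$ to your norm. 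The delay then contributes linearly and the contraction goes through. This is the same shifted-increment quantity the paper carries separately as $\kappa$ in Lemma \ref{lemkappa}.

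Alternatively, writing $Y^X(t)=\int_{t-r}^tK(t,u-t,X(u))\,du$ makes the double difference $O(|t-s|\sup_{[0,t]}|X-Z|)$. That route, however, needs $\partial_xK$ to be Lipschitz in its first two variables, which Hypothesis 3 does not give.

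A smaller point: $C^{1-\beta}\subset C^{H-\rho}$ requires $\beta\le 1-H+\rho$, and your constraint $\beta<H-\rho$ does not ensure this. Take $\beta$ close to $1-H$.
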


We obtain the next a priori estimate of the process $X$, the proof is given in Appendix \ref{Tec-Est-Sol}.  
 Such bounds will be crucial in controlling both the exact solution and the error terms arising in the discretization scheme.

\begin{prop}\label{cotasapriori}
 Under Hypotheses 1 and 3, for any $\varepsilon>0$ and $0<\rho<H$ there exists $h_0>0$ and $\Omega_{\varepsilon,h_0,\rho}\subset \Omega$ such that $\mathbb{P}(\Omega_{\varepsilon,h_0,\rho})>1-\varepsilon$ and for each $\omega\in\Omega_{\varepsilon,h_0,\rho}$, $h<h_0$
     $$|{X}(t)|< C(\omega)$$
     and for $0\leq s < t\leq T$,
     $$|{X}(t)-{X}(s)|\leq C(\omega)(t-s)^{H-\rho}.$$
\end{prop}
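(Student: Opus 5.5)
The statement has to be read for the step-$h$ Euler approximation introduced in Section~\ref{Seuler}, which I write here as $X^h$, with $Y^h$ its discretised memory term. The claim is that both bounds hold on one event $\Omega_{\varepsilon,h_0,\rho}$ with a random constant $C(\omega)$ that does not depend on $h<h_0$. I take the scheme to be of the usual form: on the grid $t_k=kh$,
\[
X^h(t)=X^h(t_k)+b\bigl(t_k,X^h(t_k),Y^h(t_k)\bigr)(t-t_k)+\sigma\bigl(t_k,X^h(t_k),Y^h(t_k)\bigr)\bigl(B^H(t)-B^H(t_k)\bigr),\qquad t\in[t_k,t_{k+1}],
\]
with $X^h=\phi$ on $[-r,0]$. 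Here $Y^h(t_k)$ is a Riemann sum of $K(t_k,t_j-t_k,X^h(t_j))$ over the delay window. Equivalently,
\[
X^h(t)=\phi(0)+\int_0^t b_{n(s)}\,ds+\int_0^t\sigma_{n(s)}\,dB^H(s),
\]
where $n(s)=\lfloor s/h\rfloor$ and $b_{n(s)},\sigma_{n(s)}$ are piecewise constant integrands.

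\textbf{Step 1: the good event.} Fix $\varepsilon>0$ and $\rho$, and pick $\beta\in(1-H,1/2)$ with $\beta<H-\rho$. Both $\|B^H\|_{H-\rho/2}$ on $[0,T]$ and the random constant $C(\omega)$ of (\ref{eqdefc0}) have moments of all orders (Garsia--Rodemich--Rumsey / Fernique). Hence there is a deterministic $M$ such that the event where both are $\le M$ has probability $>1-\varepsilon$. I also require $\sup_{[-r,0]}|\phi|$ and the Hölder constant of $\phi$ to be $\le M$. This event is $\Omega_{\varepsilon,h_0,\rho}$. Everything after this step is deterministic on that event, with all constants depending only on $M,T,r$ and the constants in the Hypotheses, never on $h$.

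\textbf{Step 2: local estimate on a window.} For $t\le T$ define
\[
Z^h(t)=\sup_{u\le t}|X^h(u)|+\sup_{0\le v<u\le t,\;u-v\le\Delta}\frac{|X^h(u)-X^h(v)|}{(u-v)^{H-\rho}}.
\]
By (\ref{HK1}), $|Y^h(t_k)|\le K_3r\bigl(1+\sup_{u\le t_k}|X^h(u)|\bigr)$, which also covers the part of the window lying in $[-r,0]$ via $\phi$. Together with (\ref{eq5}), both integrands are bounded by $C(1+Z^h)$. For the increment on $[v,u]$:
\begin{itemize}
\item The drift part contributes $C(1+Z^h)(u-v)$.
\item The noise part is handled by (\ref{eqcotaintegral}) applied to the step function $\sigma_{n(\cdot)}$. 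The first term gives $C M(1+Z^h)(u-v)^{1-\beta}$.
\item In the double integral, $|\sigma_{n(s)}-\sigma_{n(w)}|$ vanishes unless $s,w$ lie in different cells. By (\ref{eq3})--(\ref{eq4}) and the Lipschitz bound on $Y^h$, it is $\le C\bigl(|t_{n(s)}-t_{n(w)}|+Z^h\,|t_{n(s)}-t_{n(w)}|^{H-\rho}\bigr)$, and $|t_{n(s)}-t_{n(w)}|\le (s-w)+h$.
\end{itemize}
Splitting the double integral into $s-w\ge h$ and $s-w<h$ gives a bound $C(1+Z^h)(u-v)^{1-\beta}$ plus a correction of order $Z^h\,h^{H-\rho-\beta}(u-v)$. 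Since $\beta<H-\rho$, this correction is uniformly bounded once $h\le h_0\le1$.

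Dividing by $(u-v)^{H-\rho}$ and using $u-v\le\Delta$ and $1-\beta>H-\rho$ yields
\[
Z^h(t+\Delta)\le C_M\bigl(1+Z^h(t)\bigr)+C_M\,\Delta^{1-\beta-H+\rho}\,Z^h(t+\Delta).
\]
Now choose $\Delta=\Delta(M)$ so that $C_M\Delta^{1-\beta-H+\rho}\le 1/2$, and then $h_0\le\Delta$. The last term is absorbed, giving $Z^h(t+\Delta)\le 2C_M(1+Z^h(t))$.

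\textbf{Step 3: iteration.} Apply Step 2 on $[0,\Delta],[\Delta,2\Delta],\dots$, which is $\lceil T/\Delta\rceil$ windows, a number independent of $h$. This gives $Z^h(T)\le C(\omega)$, hence the uniform sup bound. For the Hölder bound with $t-s>\Delta$, insert intermediate points at spacing $\Delta$. This costs only a factor $\lceil T/\Delta\rceil$, so
\[
|X^h(t)-X^h(s)|\le C(\omega)(t-s)^{H-\rho}\quad\text{for all }h<h_0 .
\]

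\textbf{Main obstacle.} The delicate step is the double-integral term in Step 2. The integrand is a step function, so its fractional derivative carries the jump contributions across cells, and these must be shown to be of size $h^{H-\rho-\beta}$ times a quantity controlled by $Z^h$. That smallness is exactly what forces $\beta<H-\rho$ and what makes a threshold $h_0$ necessary. There is a second point to watch: the supremum in $Z^h$ involves $X^h$ between grid points, so absorbing it needs a priori finiteness of $Z^h$ for fixed $h$. This holds trivially, since for fixed $h$ the recursion is finite-step and $B^H$ is continuous.
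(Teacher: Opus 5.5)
\textbf{The proposal proves a different result from the one stated.} Proposition~\ref{cotasapriori} concerns the exact solution $X$ of \eqref{eq1}, not the Euler scheme. Three things in the paper show this:
\begin{itemize}
\item it sits in Section~\ref{SFDE}, before the scheme is introduced;
\item it is announced as an a priori estimate of the process $X$;
\item its proof in Appendix~\ref{Tec-Est-Sol} starts from \eqref{eq1}, bounds $\int_0^u\varphi^X_{u,v}\,dv$ and $\int_0^u\int_{-r}^0\Gamma_{u,v}(s)\,ds\,dv$ via Lemmas~\ref{DifX-Casos}--\ref{lema6-3}, and closes with the Gronwall argument of Theorem~\ref{teoCotasapriori}.
\end{itemize}
The clause $h<h_0$ appears to be carried over from Theorem~\ref{teoCotasapriori}, and it is vacuous for $X$. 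By deciding that the statement ``has to be read'' for the Euler approximation, you have in effect given a proof of Theorem~\ref{teoCotasapriori}. Your route differs from the paper's: you use local H\"older-window absorption and iteration instead of the functional $\varphi$ and fractional Gronwall. Nothing in your argument transfers the bounds to $X$. Passing to the limit would need pathwise convergence $\tilde X^h\to X$. The paper only obtains that in Theorem~\ref{teoconv}, and that proof already uses the regularity \eqref{deltaX} of $X$.

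\textbf{The gap is easy to close, in either of two ways.}

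First, the proposition is essentially immediate from Theorem~\ref{texistenciax}. That theorem gives $X\in C^{H-\rho}[0,T]$ a.s. with \eqref{deltaX}, and $X=\phi$ on $[-r,0]$. So $\sup_{[-r,T]}|X|$ and the H\"older constant of $X$ are a.s.\ finite random variables. You can take $\Omega_{\varepsilon,h_0,\rho}$ to be the event where both are at most a deterministic $M$, with $M$ chosen so that this event has probability $>1-\varepsilon$.

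Second, for a self-contained derivation, rerun your Step~2 with $X$ in place of $X^h$.
\begin{itemize}
\item The integrand $s\mapsto\sigma(s,X(s),Y(s))$ is now H\"older continuous.
\item Use \eqref{eq3}, \eqref{eq4} and $|Y(s)-Y(w)|\le rK_1|s-w|+K_2\int_{-r}^0|X(s+a)-X(w+a)|\,da$, with the H\"older constant of $\phi$ covering the part of the window in $[-r,0]$. Then the double integral in \eqref{eqcotaintegral} is at most $C(1+Z)(u-v)^{1+H-\rho-\beta}$.
\item There is no cell-jump correction and no threshold $h_0$.
\item A priori finiteness of $Z$ again comes from Theorem~\ref{texistenciax}.
\end{itemize}
Retargeted this way, your absorption-and-iteration scheme is a legitimate alternative to the paper's Gronwall route.

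One small point on parameters in Step~2. Your requirements $\beta<H-\rho$ and $1-\beta>H-\rho$ force $\beta\in(1-H,\min\{1/2,\,H-\rho,\,1-H+\rho\})$. This interval is nonempty only when $\rho<2H-1$. That is harmless: a H\"older bound with exponent $H-\rho'$ for $\rho'<\rho$ implies the one with exponent $H-\rho$, so you may first reduce to small $\rho$.
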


\subsection{Structural Estimates for the Delay Functional and Fractional Integral}

In this subsection, we establish several structural estimates that will be used repeatedly in the sequel. 
The first result concerns the regularity properties of the delay functional $Y(t)$ defined in (\ref{eq2}), 
while the second recalls the fundamental bound associated with the fractional integral with respect to $B^H$. 
Isolating these estimates allows us to streamline subsequent arguments and to avoid repeating technical computations. The proofs are deferred to the Appendix \ref{Aux-lemmas}.

The following lemma shows how the delay functional $Y(t)$ inherits regularity from the process $X$, while also capturing the contribution of the past trajectory over the memory window.

\begin{lemma}[Structural estimate for the delay functional]\label{lemmaY}
Let $Y$ be defined by (\ref{eq2}) and assume Hypothesis 3 holds. 
Then for all $0\le s<t\le T$,
\[
|Y(t)-Y(s)|
\le C_1 |t-s|
+ K_3 \int_{s-r}^{t-r} |X(u)|\,du.
\]
\end{lemma}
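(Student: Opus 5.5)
We work with the second representation in (\ref{eq2}), $Y(t)=\int_{t-r}^{t}K(t,u-t,X(u))\,du$, and split the difference $Y(t)-Y(s)$ according to the two integration windows $[t-r,t]$ and $[s-r,s]$. For $0\le s<t\le T$ we write
\begin{align*}
Y(t)-Y(s)
&=\int_{t-r}^{s}\bigl[K(t,u-t,X(u))-K(s,u-s,X(u))\bigr]\,du\\
&\quad+\int_{s}^{t}K(t,u-t,X(u))\,du-\int_{s-r}^{t-r}K(s,u-s,X(u))\,du .
\end{align*}
This decomposition assumes $t-s\le r$. If $t-s>r$, the windows are disjoint. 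In that case we bound $|Y(t)|+|Y(s)|$ directly by the growth condition in (\ref{HK1}). Since $r<t-s$, the result can be absorbed into the same form, with $C_1$ enlarged by a factor involving $\sup|X|$ over $[s,t]$ (which is finite by Proposition \ref{cotasapriori}) divided by $r$.

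\textbf{First term.} The time and shift arguments differ by $|t-s|$ and $|(u-t)-(u-s)|=|t-s|$. The first inequality of Hypothesis 3 therefore bounds the integrand by $2K_1|t-s|$. Since the window has length at most $r$, this term contributes $2K_1 r\,|t-s|$.

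\textbf{Second term.} Using the growth bound (\ref{HK1}), this term is at most $K_3\int_s^t(1+|X(u)|)\,du\le K_3(1+\sup_{[0,T]}|X|)\,|t-s|$. Here the supremum is controlled by Proposition \ref{cotasapriori}, so it goes into $C_1$. This is where $C_1$ is allowed to depend on $\omega$ through $\sup|X|$, and the statement should be read that way.

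\textbf{Third term.} This is the part that genuinely involves the past trajectory. By (\ref{HK1}) it is at most $K_3\,|t-s|+K_3\int_{s-r}^{t-r}|X(u)|\,du$. The first piece goes into $C_1$ and the second piece is exactly the displayed memory contribution. Note that on $[-r,0]$ we have $X=\phi$, and $\phi$ is bounded by Hypothesis $\phi$, so the integral is finite.

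Collecting the three bounds gives the claim with $C_1=2K_1r+2K_3+K_3\sup_{[0,T]}|X|$ (random). The main obstacle is bookkeeping rather than analysis. One must make sure the constant in front of $|t-s|$ may depend on $\sup|X|$, or else carry the integral $\int_s^t|X|$ along as well. The other point is handling the case $t-s>r$, where the windows do not overlap. If a deterministic $C_1$ is required, I would instead keep $K_3\int_s^t|X(u)|\,du$ explicitly and later bound it using Proposition \ref{cotasapriori}.
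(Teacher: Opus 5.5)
Your proof is correct, but it uses a different decomposition from the paper's, and yours is the sound one. The paper adds and subtracts $\int_{t-r}^{t}K(s,u-t,X(u))\,du$. It then bounds a time-variation term and a shift-variation term over the whole window $[t-r,t]$, and keeps only the exiting strip $\int_{s-r}^{t-r}K(s,u-s,X(u))\,du$, which gives the deterministic constant $C_1=r(K_1+K_2)+K_3$.

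That decomposition is not an identity. Since $\int_{t-r}^{t}-\int_{s-r}^{s}=\int_{s}^{t}-\int_{s-r}^{t-r}$, what actually remains after the first two terms is $\int_s^tK(s,u-s,X(u))\,du-\int_{s-r}^{t-r}K(s,u-s,X(u))\,du$. So the entering strip is lost. In addition, the middle term evaluates $K$ at shifts $u-s>0$, which lie outside $[-r,0]$.

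Your split into the overlap $[t-r,s]$ plus the two strips is the correct identity. It keeps every kernel argument inside $[-r,0]$ when $t-s\le r$, and it exposes the cost $K_3\int_s^t(1+|X(u)|)\,du$ of the entering strip.

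The price you identified is real, not just bookkeeping. Take $K(t,s,x)=x$, $b\equiv0$, $\sigma\equiv1$, $\phi\equiv0$ and $T=r$, so that $X=B^H$ on $[0,T]$. Then $Y(t)-Y(s)=\int_s^tB^H(u)\,du$ while $\int_{s-r}^{t-r}|X(u)|\,du=0$. Hence no deterministic $C_1$ can work, and the lemma holds only under your reading: either a pathwise $C_1$ involving $\sup|X|$, or the term $K_3\int_s^t|X(u)|\,du$ kept explicitly.

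One small simplification: finiteness of $\sup_{[-r,T]}|X|$ holds for almost every $\omega$ because $X$ is continuous (Theorem \ref{texistenciax}) and $\phi$ is continuous. You do not need Proposition \ref{cotasapriori}, which only controls $X$ on a set of probability $>1-\varepsilon$.
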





\section{Euler scheme for SFDE}\label{Seuler}

In this section, we introduce the Euler approximation of the SFDE and describe how the fractional noise and the memory term are discretized. Particular attention is paid to the delay functional, whose approximation involves the past trajectory of the numerical solution.

We assume, without loss of generality, that there exists $N_T \in \mathbb{N}$ such that $T = N_T r$. 
Let $h := T/N$ for some $N \in \mathbb{N}$ and consider the uniform partition of $[-r,T]$ given by
\[
t_n = nh, \qquad n = -N, \dots, -1, 0, 1, \dots, N\cdot N_T.
\]
For simplicity, we take $T=r$, so that $N_T=1$.

\medskip

The Euler scheme associated with \eqref{eq1} is defined recursively by
\begin{equation*}\label{euler}
\begin{cases}
\Tilde{X}^h(t_n) = \phi(t_n), & n = -N, \dots, 0, \\
\Tilde{X}^h(t_{n+1})
= \Tilde{X}^h(t_n)
+ h\, b(t_n,\Tilde{X}^h(t_n),\Tilde{Y}(t_n))
+ \sigma(t_n,\Tilde{X}^h(t_n),\Tilde{Y}(t_n)) \Delta B^H(t_n),
& n = 0,\dots,N-1,
\end{cases}
\end{equation*}
where
\[
\Delta B^H(t_n) = B^H(t_{n+1}) - B^H(t_n).
\]

In contrast to the classical Markovian setting, the scheme depends not only on the current value $\Tilde{X}^h(t_n)$ but also on the discrete approximation of the delay functional $\Tilde{Y}(t_n)$, which encodes the past trajectory of the process.
\medskip

The continuous interpolation is defined for $t \in [t_n,t_{n+1})$ by
\begin{align*}
\Tilde{X}^h(t)
&= \Tilde{X}^h(t_n)
+ b(t_n,\Tilde{X}^h(t_n),\Tilde{Y}(t_n))(t-t_n) + \sigma(t_n,\Tilde{X}^h(t_n),\Tilde{Y}(t_n))(B^H(t)-B^H(t_n)).
\end{align*}

This continuous-time interpolation allows us to rewrite the discrete scheme in an integral form, which is more suitable for comparison with the exact solution and for the derivation of error estimates.

Then for $t\ge0$,
\begin{equation}\label{EulerInterpolado}
\Tilde{X}^h(t)
\!=\!
X(0)
+
\int_0^t b(C_h(u),\Tilde{X}^h(C_h(u)),\Tilde{Y}(C_h(u)))\,du
+
\int_0^t \sigma(C_h(u),\Tilde{X}^h(C_h(u)),\Tilde{Y}(C_h(u)))\,dB^H(u),
\end{equation}
where
\begin{equation}\label{Chu}
C_h(u)=\left\lfloor \frac{u}{h} \right\rfloor h.
\end{equation}
Clearly, if $u \in [t_i,t_{i+1})$ then $C_h(u)=t_i$.

The operator $C_h(u)$ selects the last discretization point before $u$, ensuring that the coefficients are evaluated along the piecewise constant approximation of the trajectory.

\medskip

The discrete delay functional is given by
\begin{equation}\label{AproxY}
\Tilde{Y}(t)
= \sum_{i=-N}^{-1} h\, K(t,t_i,\Tilde{X}^h(t+t_i)).
\end{equation}

Equivalently,
\begin{align*}\label{AproxY2}
\Tilde{Y}(t)
&= \sum_{i=-N}^{-1}
\int_{t_i}^{t_{i+1}}
K(t,C_h(s),\Tilde{X}^h(t+C_h(s)))\,ds \nonumber\\
&= \int_{-r}^{0}
K(t,C_h(s),\Tilde{X}^h(t+C_h(s)))\,ds.
\end{align*}

This representation makes explicit the link between the exact delay term and its discrete counterpart, and will be instrumental in the convergence analysis, where the discrepancy induced by the discretization of both the kernel and the trajectory must be carefully controlled.


\section{Estimates for the numerical scheme}\label{sec:num}

In this section, we derive a priori estimates for the numerical scheme that will be used in the convergence analysis.

First, we introduce some technical lemmas needed to estimate $|\Tilde{X}^h(t)|$ for $0\leq t\leq T$. 

We define the function $\Tilde{F}: [-r,T]\rightarrow \mathbb{R}$ by
\begin{equation}
\label{Ftilde}
    \Tilde{F}(t)=\sup_{u\in[-r,t]}|\Tilde{X}^h(u)|.
\end{equation}

The following result is the discrete structural counterpart of 
Lemma \ref{lemmaY}. It shows that the Euler approximation 
preserves the incremental structure of the delay functional 
uniformly with respect to the mesh size.  For clarity, we defer the proofs of the lemmas to the Appendix \ref{Aux-lemmas}.

\begin{lemma}\label{cotadeYtilde-Xtilde}
Let $Y(t)$ given by \eqref{eq2}. If $\Tilde{Y}(t)$ is the approximation of the integral $Y(t)$, by Riemann sums \eqref{AproxY},
\begin{equation*}
    \Tilde{Y}(t)=\sum_{i=-N}^{-1} h\cdot K(t,t_i,\Tilde{X}^h(t+t_i)),
\end{equation*}
then
\begin{equation*}
    |\Tilde{Y}(C_h(u))| \leq rK_3(1+\Tilde{F}(u-h)).
\end{equation*}
\end{lemma}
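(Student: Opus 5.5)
The plan is to apply the linear growth bound \eqref{HK1} of Hypothesis 3 to each summand of the Riemann sum \eqref{AproxY}, evaluated at the grid point $t=C_h(u)$. Write $C_h(u)=t_k$, with $t_k\le u<t_{k+1}$. Then
\[
\Tilde{Y}(t_k)=\sum_{i=-N}^{-1} h\,K\bigl(t_k,t_i,\Tilde{X}^h(t_k+t_i)\bigr),
\]
so the triangle inequality together with $|K(t,s,x)|\le K_3(1+|x|)$ gives
\[
|\Tilde{Y}(t_k)|\le \sum_{i=-N}^{-1} h\,K_3\bigl(1+|\Tilde{X}^h(t_k+t_i)|\bigr).
\]

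Next we bound each $|\Tilde{X}^h(t_k+t_i)|$ by the running supremum $\Tilde{F}$ from \eqref{Ftilde}. For $i=-N,\dots,-1$ the evaluation points satisfy
\[
t_k-r=t_k+t_{-N}\le t_k+t_i\le t_k-h.
\]
Since $t_k\ge 0$ and $r=Nh$, all these points lie in $[-r,\,t_k-h]$. Because $t_k\le u$ and $\Tilde{F}$ is nondecreasing, every such point lies in $[-r,u-h]$, and hence $|\Tilde{X}^h(t_k+t_i)|\le \Tilde{F}(u-h)$.

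Summing the $N$ terms and using $Nh=r$ then yields $|\Tilde{Y}(C_h(u))|\le rK_3(1+\Tilde{F}(u-h))$.

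The only delicate point is the index range: the sum stops at $i=-1$, so the most recent value used is $\Tilde{X}^h(t_k-h)$ rather than $\Tilde{X}^h(t_k)$. This is what lets us use $\Tilde{F}(u-h)$ instead of $\Tilde{F}(u)$, and it is exactly what makes the later recursive (Gronwall-type) bound on $\Tilde{F}$ possible. There is also an edge case when $u<h$, since then $u-h<0$. Here $t_k=0$ and all evaluation points lie in $[-r,-h]\subset[-r,u-h]$, so the same argument applies, with $\Tilde{F}$ interpreted on $[-r,T]$ as in \eqref{Ftilde}. Otherwise the proof is a routine computation.
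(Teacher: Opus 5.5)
Your proof is correct and is essentially the paper's argument. Both apply the linear growth bound \eqref{HK1} termwise to the Riemann sum at $t=C_h(u)$, observe that the evaluation points $C_h(u)+t_i$ with $-N\le i\le -1$ lie in $[-r,u-h]$ so that each $|\Tilde{X}^h(C_h(u)+t_i)|\le \Tilde{F}(u-h)$, and then sum using $Nh=r$. Your explicit treatment of the case $u<h$ is a small clarification that the paper leaves implicit.
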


Observe that the constants in Lemma \ref{cotadeYtilde-Xtilde} are independent of $h$.
Hence the discrete delay functional satisfies the same structural
increment bound uniformly in the discretization parameter.

The following lemma gives a relation between $\Delta {\Tilde{Y}}$ and $\Delta {\Tilde{X}}$. This relation will be used to transfer regularity estimates from $\Tilde{X}^h$ to the delay term.

\begin{lemma}\label{cotadeDYtilde-DXtilde}
Let $\Tilde{Y}$ be the approximation defined by \eqref{AproxY}. Then
\begin{equation*}
    |\Tilde{Y}(u)-\Tilde{Y}(v)|  \leq rK_1|u-v| + K_2 \int_{-r}^{0} |\Tilde{X}^h(u+C_h(s))-\Tilde{X}^h(v+C_h(s))|{d}s,
\end{equation*}
for $0\leq v\leq u\leq T$.
\end{lemma}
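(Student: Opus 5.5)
The plan is to work directly from the Riemann-sum representation \eqref{AproxY}, or equivalently from its integral form $\Tilde{Y}(t)=\int_{-r}^{0}K(t,C_h(s),\Tilde{X}^h(t+C_h(s)))\,ds$. For $0\le v\le u\le T$ we write
\begin{equation*}
\Tilde{Y}(u)-\Tilde{Y}(v)=\int_{-r}^{0}\Big[K(u,C_h(s),\Tilde{X}^h(u+C_h(s)))-K(v,C_h(s),\Tilde{X}^h(v+C_h(s)))\Big]\,ds .
\end{equation*}
We then insert and subtract the mixed term $K(v,C_h(s),\Tilde{X}^h(u+C_h(s)))$ inside the integrand. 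This splits the difference into a part where only the time argument changes and a part where only the state argument changes.

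For the first part we use the first inequality of Hypothesis 3, with the second argument of $K$ fixed at $C_h(s)$ and the state fixed at $\Tilde{X}^h(u+C_h(s))$. This gives the pointwise bound $|K(u,C_h(s),x)-K(v,C_h(s),x)|\le K_1|u-v|$. Integrating over $s\in[-r,0]$, an interval of length $r$, yields the term $rK_1|u-v|$.

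For the second part we use the Lipschitz condition in the state variable from Hypothesis 3. This gives $|K(v,C_h(s),x)-K(v,C_h(s),y)|\le K_2|x-y|$ with $x=\Tilde{X}^h(u+C_h(s))$ and $y=\Tilde{X}^h(v+C_h(s))$. Integrating over $[-r,0]$ produces exactly the integral term in the statement. The triangle inequality then combines the two parts.

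The only point needing care is well-definedness. The arguments $u+C_h(s)$ and $v+C_h(s)$ lie in $[-r,T]$, where $\Tilde{X}^h$ is defined: it equals $\phi$ on the grid of $[-r,0]$ (interpolated if needed) and is given by the Euler interpolation on $[0,T]$. Equivalently, one may check that the sum form $\sum_{i=-N}^{-1} h\,K(t,t_i,\Tilde{X}^h(t+t_i))$ coincides with the integral form because $C_h(s)=t_i$ on $[t_i,t_{i+1})$. Either way the argument is a direct Lipschitz estimate, and I do not expect a genuine obstacle.
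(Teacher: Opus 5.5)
Your proof is correct and follows essentially the same route as the paper. The paper works termwise on the Riemann sum: it bounds each summand $h\,|K(u,t_i,\cdot)-K(v,t_i,\cdot)|$ by the Lipschitz conditions of Hypothesis 3, then rewrites $\sum_i h|\cdots|$ as $\int_{-r}^{0}|\cdots|\,ds$ using $C_h(s)=t_i$ on $[t_i,t_{i+1})$ — the same add-and-subtract estimate you perform directly on the integral form.
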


We next control the local discretization error between grid points.

\begin{lemma}\label{cota-Xtilde}
Let $\Tilde{X}^h(t)$ be the Euler scheme given by \eqref{EulerInterpolado}. Then, for $v \in [0,T]$
\begin{equation*}
     |\Tilde{X}^h(v)-\Tilde{X}^h(C_h(v))| \leq C(\omega) \left(1+|\Tilde{X}^h(C_h(v))|+rK_3(1+\Tilde{F}(v-h))\right)(v-C_h(v))^{H-\rho},
\end{equation*}
where $C(\omega)$ is  a constant independent of $h$.
\end{lemma}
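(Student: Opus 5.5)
The bound follows directly from the continuous interpolation, because on a single step the coefficients are frozen. For $v\in[t_n,t_{n+1})$ we have $C_h(v)=t_n$, and the interpolation formula gives
\[
\Tilde{X}^h(v)-\Tilde{X}^h(C_h(v)) = b(t_n,\Tilde{X}^h(t_n),\Tilde{Y}(t_n))(v-t_n) + \sigma(t_n,\Tilde{X}^h(t_n),\Tilde{Y}(t_n))\bigl(B^H(v)-B^H(t_n)\bigr).
\]
No integral estimate such as \eqref{eqcotaintegral} is needed. The stochastic term is a frozen coefficient times an increment of $B^H$.

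First, I bound the two coefficients by the linear growth condition \eqref{eq5}:
\[
|b(t_n,\cdot,\cdot)|+|\sigma(t_n,\cdot,\cdot)| \le L_3\bigl(1+|\Tilde{X}^h(t_n)|+|\Tilde{Y}(t_n)|\bigr).
\]
Then I invoke Lemma \ref{cotadeYtilde-Xtilde} with $u=v$, which gives $|\Tilde{Y}(C_h(v))|\le rK_3(1+\Tilde{F}(v-h))$. This produces exactly the bracket $1+|\Tilde{X}^h(C_h(v))|+rK_3(1+\Tilde{F}(v-h))$ that appears in the statement.

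Next I control the two increments.
\begin{itemize}
\item For the drift increment, $v-t_n\le h\le T$, so $(v-t_n)=(v-t_n)^{1-(H-\rho)}(v-t_n)^{H-\rho}\le T^{1-H+\rho}(v-t_n)^{H-\rho}$.
\item For the noise increment, the trajectories of $B^H$ are a.s. Hölder continuous of every order strictly less than $H$. This gives a random constant $G(\omega)=\sup_{0\le s<t\le T}|B^H(t)-B^H(s)|/(t-s)^{H-\rho}<\infty$ a.s., and hence $|B^H(v)-B^H(t_n)|\le G(\omega)(v-t_n)^{H-\rho}$.
\end{itemize}
Setting $C(\omega)=L_3\bigl(T^{1-H+\rho}+G(\omega)\bigr)$ then yields the claimed bound.

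The only point needing care is that $C(\omega)$ does not depend on $h$. This holds because $G(\omega)$ is a global Hölder seminorm of the path on $[0,T]$, independent of the partition, and $T^{1-H+\rho}$ is deterministic.
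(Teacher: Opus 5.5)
Your proposal is correct and takes essentially the same route as the paper. You freeze the coefficients on $[C_h(v),v]$, apply the linear growth bound \eqref{eq5} together with Lemma~\ref{cotadeYtilde-Xtilde}, and convert both increments into $(v-C_h(v))^{H-\rho}$ with the constant $L_3\bigl(T^{1-H+\rho}+C(\omega)\bigr)$. The only difference is minor: you take the random constant to be the global $(H-\rho)$-Hölder seminorm of $B^H$, which justifies the increment bound more accurately than the paper's appeal to \eqref{eqdefc0}.
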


\begin{remark}\label{u-cu-beta}
For all $0 \le i \le N-1$ and $C_h$ defined in \eqref{Chu}, we have
\[
\int_{t_i}^{t_{i+1}} (u-C_h(u))^{-\beta}du
=
\int_{t_i}^{t_{i+1}} (u-t_i)^{-\beta}du
=
\frac{1}{1-\beta}h^{1-\beta}.
\]
Consequently,
\begin{align}\label{u-chu}
\int_0^t (u-C_h(u))^{-\beta}du
&\le \sum_{i=0}^{N-1} \int_{t_i}^{t_{i+1}} (u-C_h(u))^{-\beta}du =\frac{N}{1-\beta}h^{1-\beta}
= \frac{T}{1-\beta}h^{-\beta}.
\end{align}
\end{remark}

\begin{lemma}\label{X-CHU-v}
Let $\Tilde{X}^h(t)$ be the Euler scheme given by \eqref{EulerInterpolado}. Then, for    $0\leq v \leq C_h(u)$, 
\begin{align*}
\label{eq14-0}
&\int_0^{C_h(u)} |\Tilde{X}^h(C_h(u))-\Tilde{X}^h(v)|(u-v)^{-\beta-1}{d}v \\
 \leq &C(\omega)\left(1 + \int_0^{C_h(u)} |\Tilde{X}^h(w)|(u-w)^{-2\beta}{d}w + \int_0^{C_h(u)} \Tilde{F}(w)(u-w)^{-2\beta}{d}w \right. \notag\\
 &+ \int_0^{C_h(u)} (u-w)^{-\beta}\int_0^{C_h(w)} \dfrac{|\Tilde{X}^h(C_h(w))-\Tilde{X}^h(z)|}{(w-z)^{\beta+1}}{d}z{d}w + \Tilde{F}(C_h(u))h^{H-\rho-\beta} \notag\\
 & \left. + \int_0^{C_h(u)} (u-w)^{-\beta}\int_0^{C_h(w)} \int_{-r}^0 \dfrac{ |\Tilde{X}^h(C_h(w)+C_h(s))-\Tilde{X}^h(C_h(z)+C_h(s))|{d}s}{(w-z)^{\beta+1}}{d}z{d}w\right).\notag
\end{align*}
\end{lemma}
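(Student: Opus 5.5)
We plan to start from the integral representation \eqref{EulerInterpolado}. For $0\le v\le C_h(u)$ we write
\[
\Tilde{X}^h(C_h(u))-\Tilde{X}^h(v)=\int_v^{C_h(u)} b(C_h(w),\Tilde{X}^h(C_h(w)),\Tilde{Y}(C_h(w)))\,dw+\int_v^{C_h(u)} \sigma(C_h(w),\Tilde{X}^h(C_h(w)),\Tilde{Y}(C_h(w)))\,dB^H(w).
\]
We multiply by $(u-v)^{-\beta-1}$ and integrate in $v$ over $[0,C_h(u)]$.

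\textbf{Drift part.} We use the growth bound \eqref{eq5} together with Lemma \ref{cotadeYtilde-Xtilde}. This bounds the integrand by $C(1+|\Tilde{X}^h(C_h(w))|+\Tilde{F}(w))$. Since $u-v\ge w-v$ and everything is nonnegative, Fubini gives
\[
\int_0^{C_h(u)}(u-v)^{-\beta-1}\int_v^{C_h(u)}g(w)\,dw\,dv\le C\int_0^{C_h(u)}g(w)\,(u-w)^{-\beta}\,dw,
\]
and $(u-w)^{-\beta}\le T^{\beta}(u-w)^{-2\beta}$. The value $|\Tilde{X}^h(C_h(w))|$ is then bounded by $\Tilde{F}(w)$. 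This produces the constant term and the two integrals involving $(u-w)^{-2\beta}$.

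\textbf{Stochastic part.} We apply \eqref{eqcotaintegral} on the interval $[v,C_h(u)]$ with $f(w)=\sigma(C_h(w),\dots)$. Two pieces appear.
\begin{itemize}
\item The piece $\int_v^{C_h(u)} |f(w)|(w-v)^{-\beta}\,dw$. Using the elementary estimate $\int_0^w (u-v)^{-\beta-1}(w-v)^{-\beta}\,dv\le C(u-w)^{-2\beta}$, Fubini turns it into $\int_0^{C_h(u)}|f(w)|(u-w)^{-2\beta}\,dw$. Growth again bounds this by the $|\Tilde{X}^h|$ and $\Tilde{F}$ terms.
\item The double integral $\int_v^{C_h(u)}\int_v^{w}|f(w)-f(z)|(w-z)^{-\beta-1}\,dz\,dw$. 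After Fubini in $v$, the factor $\int_0^{z}(u-v)^{-\beta-1}\,dv\le C(u-z)^{-\beta}\le C(u-w)^{-\beta}$ comes out. Here $(u-w)^{-\beta}\le(u-z)^{-\beta}$ because $z\le w$.
\end{itemize}

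\textbf{Increments of $f$.} By \eqref{eq3}--\eqref{eq4}, $|f(w)-f(z)|$ is bounded by
\[
L_1|C_h(w)-C_h(z)|+L_2|\Tilde{X}^h(C_h(w))-\Tilde{X}^h(C_h(z))|+L_2|\Tilde{Y}(C_h(w))-\Tilde{Y}(C_h(z))|.
\]
We treat the three terms separately.
\begin{itemize}
\item The time term is integrable against $(w-z)^{-\beta-1}$ and is absorbed into the constant.
\item For the $\Tilde{Y}$ term we invoke Lemma \ref{cotadeDYtilde-DXtilde}. This gives the last term of the statement, with the triple integral in $s$.
\item For the $\Tilde{X}^h$ term we split $\Tilde{X}^h(C_h(w))-\Tilde{X}^h(C_h(z))=[\Tilde{X}^h(C_h(w))-\Tilde{X}^h(z)]+[\Tilde{X}^h(z)-\Tilde{X}^h(C_h(z))]$. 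The first bracket gives exactly the recursive term $\int(u-w)^{-\beta}\int_0^{C_h(w)}\cdots$. This requires restricting $z\le C_h(w)$; the strip $z\in(C_h(w),w]$ has $C_h(z)=C_h(w)$, so the $\Tilde{X}^h$ difference vanishes there and the time difference is zero.
\end{itemize}

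\textbf{Main obstacle.} The step we expect to be hardest is the local term $|\Tilde{X}^h(z)-\Tilde{X}^h(C_h(z))|$. We plan to use Lemma \ref{cota-Xtilde}, which bounds it by $C(\omega)(1+\Tilde{F}(z))(z-C_h(z))^{H-\rho}\le C(\omega)(1+\Tilde{F}(C_h(u)))h^{H-\rho}$. It remains to integrate against $(w-z)^{-\beta-1}$ on $z<C_h(w)$. Because $w-z$ can be as small as $w-C_h(w)$, this integral is of order $(w-C_h(w))^{-\beta}$. Integrating then against $(u-w)^{-\beta}$ and using Remark \ref{u-cu-beta}, a factor $h^{-\beta}$ emerges, which yields the $\Tilde{F}(C_h(u))h^{H-\rho-\beta}$ term. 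If the uniform singularity at $w-C_h(w)$ is worse than needed, we would first try a Hölder split of the $(u-w)^{-\beta}$ weight against $(w-C_h(w))^{-\beta}$ on each subinterval. Finally, all constants depending on $\beta$, $T$, $r$ and $C(\omega)$ from \eqref{eqdefc0} are collected into a single $C(\omega)$.
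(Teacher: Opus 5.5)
Your plan is the paper's proof essentially term by term: apply \eqref{eqcotaintegral} on $[v,C_h(u)]$, control the growth terms through $\int_0^w(u-v)^{-\beta-1}(w-v)^{-\beta}\,dv\le C(u-w)^{-2\beta}$, restrict to $z\le C_h(w)$ and split the $\Tilde{X}^h$ increment, then use Lemma~\ref{cota-Xtilde} for the local piece and Lemma~\ref{cotadeDYtilde-DXtilde} for the $\Tilde{Y}$ piece; this is the paper's decomposition into $Q^1$--$Q^7$. Two small fixes: your justification of $(u-z)^{-\beta}\le(u-w)^{-\beta}$ is written backwards (it holds because $u-z\ge u-w$), and since you exchange the $v$-integral first you need the weighted bound $\int_0^{C_h(u)}(u-w)^{-\beta}(w-C_h(w))^{-\beta}\,dw\le Ch^{-\beta}$ rather than Remark~\ref{u-cu-beta} alone --- it follows from Cauchy--Schwarz since $\beta<1/2$, and it also turns the $h(w-C_h(w))^{-\beta}$ part of your time terms into $Ch^{1-\beta}$ (the paper instead keeps the $v$-integral and applies \eqref{u-chu} on $[v,C_h(u)]$).
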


\begin{lemma}\label{Cota- I(w)}
Let $\Tilde{X}^h(t)$ be the Euler scheme given by \eqref{EulerInterpolado} and  for $w \in [0,T]$ define
\[
I(w):=\int_{0}^{C_h(w)}(w-z)^{-\beta-1}\int_{-r}^{0}
\big|\widetilde X^h(C_h(w)+C_h(s))-\widetilde X^h(C_h(z)+C_h(s))\big|dsdz.
\]  
Then, for $w \in [0,T]$
\begin{align*}
 I(w) \leq & C_1(\omega) \left( 1+ \widetilde F(C_h(w))  +  h ^{H - \rho}(w- C_h(w))^{-\beta}  +  h  (w-C_h(w))^{-\beta}  \widetilde F(C_h(w)) \right. \nonumber \\
& + \left.
\int_0^{C_h(w)} (w-z)^{-\beta-1} \int_{-C_h(z)}^{0}
\big|\widetilde X^h(C_h(w)+C_h(s))-\widetilde X^h(C_h(z)+C_h(s))\big| dsdz
\right).
\end{align*}
\end{lemma}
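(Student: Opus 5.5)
The plan is to split the inner $s$-integral in $I(w)$ according to whether the shifted grid times $C_h(z)+C_h(s)$ and $C_h(w)+C_h(s)$ land in the initial segment $[-r,0]$, where $\widetilde X^h=\phi$ on grid points, or in $[0,T]$, where $\widetilde X^h$ is the Euler approximation. Since $C_h(z)\le C_h(w)$ and both are grid points, for $s\in[-r,-C_h(z))$ we have $C_h(z)+C_h(s)<0$. This gives the decomposition
\[
\int_{-r}^0 = \int_{-r}^{-C_h(w)} + \int_{-C_h(w)}^{-C_h(z)} + \int_{-C_h(z)}^0 .
\]
The last piece is left untouched and is exactly the remaining integral term in the statement. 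The other two pieces are the ones to estimate.

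\textbf{First piece, $s\in[-r,-C_h(w))$.} Both arguments are grid points in $[-r,0)$, so the difference equals $|\phi(C_h(w)+C_h(s))-\phi(C_h(z)+C_h(s))|$. By Hypothesis $\phi$ this is at most $L|C_h(w)-C_h(z)|^{H-\rho}$. I then use $|C_h(w)-C_h(z)|\le (w-z)+h$ and split once more.
\begin{itemize}
\item For $z\le C_h(w)-h$ we have $w-z\ge h$, so $C_h(w)-C_h(z)\le 2(w-z)$. The contribution is bounded by $\int_0^{C_h(w)}(w-z)^{H-\rho-\beta-1}dz\le C$, because $\beta<H-\rho$ once $\rho$ is chosen small relative to $H+\beta-1$ (recall $\beta\in(1-H,1/2)$).
\item For $z\in[C_h(w)-h,C_h(w)]$ we have $C_h(z)\ge C_h(w)-h$, so the difference is at most $Lh^{H-\rho}$. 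Integrating $(w-z)^{-\beta-1}$ over $z\le C_h(w)$ gives $\beta^{-1}(w-C_h(w))^{-\beta}$.
\end{itemize}
Together with the factor $r$ from the $s$-integration, these account for the constant term and the term $h^{H-\rho}(w-C_h(w))^{-\beta}$.

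\textbf{Middle piece, $s\in[-C_h(w),-C_h(z))$.} Here $C_h(z)+C_h(s)\le 0$ and the point is a grid point, so the corresponding value is $\phi$, bounded by $\|\phi\|$. The other value is $\widetilde X^h$ at a nonnegative grid time $\le C_h(w)$, bounded by $\widetilde F(C_h(w))$. The length of this $s$-interval is $C_h(w)-C_h(z)$, so the contribution is at most
\[
(\|\phi\|+\widetilde F(C_h(w)))\int_0^{C_h(w)}(w-z)^{-\beta-1}(C_h(w)-C_h(z))\,dz .
\]
I split this $z$-integral in the same way as before.
\begin{itemize}
\item On the far range $z\le C_h(w)-h$, $C_h(w)-C_h(z)\le 2(w-z)$, and $\int (w-z)^{-\beta}\,dz\le C$. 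This gives $C(1+\widetilde F(C_h(w)))$.
\item On the near range, $C_h(w)-C_h(z)\le h$, which gives $h\,\beta^{-1}(w-C_h(w))^{-\beta}(\|\phi\|+\widetilde F(C_h(w)))$.
\end{itemize}
The resulting terms are absorbed into $1+\widetilde F(C_h(w))+h^{H-\rho}(w-C_h(w))^{-\beta}+h(w-C_h(w))^{-\beta}\widetilde F(C_h(w))$, using $h\le h^{H-\rho}$ for $h<1$.

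\textbf{Main obstacle.} The delicate point is the careful bookkeeping of grid rounding, $C_h(w)-C_h(z)$ versus $w-z$, near the diagonal $z\approx w$. There the singularity $(w-z)^{-\beta-1}$ is not integrable against a bound of order $h$ rather than $(w-z)$, and this is precisely what produces the $(w-C_h(w))^{-\beta}$ factors. I would handle it uniformly with the near/far split at $z=C_h(w)-h$ described above. All constants, including $\|\phi\|$, $L$, $r$, $\beta$, $\rho$ and $T$, are collected into $C_1(\omega)$, which is independent of $h$.
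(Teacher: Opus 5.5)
Your proof is correct and follows the paper's argument. You split the $s$-integral according to whether the shifted grid times land in the initial segment, use the Hölder continuity of $\phi$ on $[-r,-C_h(w))$, bound the mixed region via $\widetilde F(C_h(w))$, and leave $[-C_h(z),0]$ untouched. The only difference is bookkeeping: the paper splits your middle piece at $s=-z$ into a part of length $C_h(w)-z\le w-z$ and a part of length at most $h$, and this does the same job as your near/far split in $z$ at $C_h(w)-h$.
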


\begin{lemma}\label{dif-dif}
Let $\Tilde{X}^h(t)$ be the Euler scheme given by \eqref{EulerInterpolado}, $z, w$ such that $0<z<C_h(w)$, and  $s \in [-r,0]$, then

\begin{align*}\label{DIFF-2}
&    \int_0^{C_h(w)}\int_{-C_h(z)}^0 \dfrac{ |\tilde{X}^h(C_h(w)+C_h(s))-\tilde{X}^h(C_h(z)+C_h(s))|}{(w-z)^{\beta+1}} {d}s{d}z  \nonumber \\
\leq &C(\omega)\left( 1 + h^{1 - \beta} + h^{H-\rho -\beta} + h^{H-\rho -\beta}\tilde{F}(w)  +  h^{1-\beta}  \widetilde F(C_h(w))  +  \widetilde F(C_h(w)) w^{1-\beta} 
 \right. \nonumber \\
& + \int_{0}^{C_h(w)} \frac{|\Tilde{F}^h(C_h(z))|}{(w-z)^{2\beta}}{d}z + \int_{0}^{C_h(w)} \int_{0}^{C_h(u)}
(w-u)^{-\beta}\,
\frac{\big|\tilde{X}^h(C_h(u))-\tilde{X}^h(v)\big|}{(u-v)^{\beta+1}}
dvdu \nonumber \\ 
&  + \int_{0}^{C_h(w)} (w-u)^{-\beta}\, 
\int_0^{C_h(u)} (u-v)^{-\beta-1} \nonumber \\ 
& \hspace{3.3cm}\left. \int_{-C_h(v)}^{0}
\big|\widetilde X^h(C_h(u)+C_h(s))-\widetilde X^h(C_h(v)+C_h(s))\big| dsdv du\right).
\end{align*}

\end{lemma}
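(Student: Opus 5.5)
The idea is to estimate the integrand pointwise by writing the increment of $\tilde X^h$ through the integral representation \eqref{EulerInterpolado}, and then exchange the order of integration. Fix $0<z<C_h(w)$ and $s\in[-C_h(z),0]$. Set $a:=C_h(z)+C_h(s)$ and $b:=C_h(w)+C_h(s)$. Then $0\le a\le b$ up to one grid step, so both points lie in $[0,T]$, where the scheme is given by \eqref{EulerInterpolado}. This is the reason the inner integral starts at $-C_h(z)$: it avoids the initial segment $\phi$, whose contribution was already separated in Lemma \ref{Cota- I(w)}. Since $a$ and $b$ are grid points,
\begin{equation*}
\tilde X^h(b)-\tilde X^h(a)=\int_a^b b(C_h(u),\tilde X^h(C_h(u)),\tilde Y(C_h(u)))\,du+\int_a^b \sigma(C_h(u),\tilde X^h(C_h(u)),\tilde Y(C_h(u)))\,dB^H(u).
\end{equation*}

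\textbf{Drift part.} I would use the growth bound \eqref{eq5} together with Lemma \ref{cotadeYtilde-Xtilde}. This gives a bound $C(1+\tilde F(C_h(w)))(b-a)$. Since $b-a\le w-z+2h$, integrating against $(w-z)^{-\beta-1}$ over $z$ and over $s\in[-r,0]$ yields terms of the form $C(1+\tilde F(C_h(w)))\bigl(w^{1-\beta}+h^{1-\beta}\bigr)$. The $h^{1-\beta}$ contribution comes from the region $w-z<h$, handled as in Remark \ref{u-cu-beta}.

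\textbf{Stochastic part.} I would apply \eqref{eqcotaintegral} on $[a,b]$ to $f(u)=\sigma(C_h(u),\tilde X^h(C_h(u)),\tilde Y(C_h(u)))$. The first term of \eqref{eqcotaintegral} is $\int_a^b|f(u)|(u-a)^{-\beta}du$. It is bounded by $C(1+\tilde F(C_h(w)))(b-a)^{1-\beta}$, which after integration in $z$ contributes again $\tilde F(C_h(w))w^{1-\beta}$ and $\int_0^{C_h(w)}\tilde F(C_h(z))(w-z)^{-2\beta}dz$-type terms. The second term of \eqref{eqcotaintegral} is the double integral of $|f(u)-f(v)|(u-v)^{-\beta-1}$. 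Here I would use the Lipschitz bounds \eqref{eq3}–\eqref{eq4} to split $|f(u)-f(v)|$ into three pieces:
\begin{itemize}
\item a time part $L_1|C_h(u)-C_h(v)|$;
\item an $x$-part $|\tilde X^h(C_h(u))-\tilde X^h(v)|+|\tilde X^h(v)-\tilde X^h(C_h(v))|$;
\item a $y$-part $|\tilde Y(C_h(u))-\tilde Y(C_h(v))|$.
\end{itemize}
The middle piece of the $x$-part is controlled by Lemma \ref{cota-Xtilde}, which produces the terms $h^{H-\rho-\beta}(1+\tilde F(w))$. The $y$-part is controlled by Lemma \ref{cotadeDYtilde-DXtilde}, which yields the triple integral of increments of $\tilde X^h$ at shifted grid points. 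Restricting that triple integral to $s\ge -C_h(v)$ and absorbing the remainder into $\tilde F$ gives the last term of the statement.

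\textbf{Change of variables (main obstacle).} The key step is converting the integrals over $[a,b]$, taken for each $(z,s)$, into integrals of the form $\int_0^{C_h(w)}(w-u)^{-\beta}\cdots du$. I would exchange the order of integration between $(z,s)$ and $u$. For fixed $u$, the weight $\int (w-z)^{-\beta-1}\mathbf 1_{\{a\le u\le b\}}\,dz\,ds$ should be bounded by $C(w-u)^{-\beta}$. This bound uses that the lengths of the $s$-slices are at most $r$, together with the fact that $u\le b$ forces $w-z\gtrsim$ distance-type relations via the shift. I expect verifying this Fubini kernel estimate, including the one-step discretization offsets $C_h$, to be the most delicate point. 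Any boundary mismatch of order $h$ would be absorbed into the $h^{1-\beta}$ and $h^{H-\rho-\beta}$ terms. Collecting all the pieces then gives the stated bound.
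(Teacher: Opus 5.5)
Your outline follows the paper's route. Both arguments bound the increment pointwise through \eqref{EulerInterpolado} and \eqref{eqcotaintegral}, split $|f(u)-f(v)|$ into time, $x$ and $y$ parts, invoke Lemmas \ref{cota-Xtilde}, \ref{cotadeDYtilde-DXtilde} and \ref{Cota- I(w)}, and exchange the order of integration. The one structural difference is that the paper first inserts the point $z+C_h(s)$. It applies \eqref{eqcotaintegral} only on $[z+C_h(s),C_h(w)+C_h(s)]$, whose length $C_h(w)-z\le w-z$ lets the drift and the first fractional term integrate cleanly against $(w-z)^{-\beta-1}$. The whole discretization offset is then isolated in the single piece $|\tilde X^h(z+C_h(s))-\tilde X^h(C_h(z)+C_h(s))|$, handled by Lemma \ref{cota-Xtilde}. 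Working directly on $[C_h(z)+C_h(s),C_h(w)+C_h(s)]$, as you do, spreads that offset over every term.

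\textbf{The gap.} The step you call delicate is where the argument fails, and it cannot be repaired.

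For $z\in[C_h(w)-h,C_h(w))$ one has $C_h(z)=C_h(w)-h$, so $b-a=h$ however close $z$ is to $w$. The resulting terms are:
\begin{itemize}
\item your drift term contributes $C(1+\tilde F(C_h(w)))\,h\int_{C_h(w)-h}^{C_h(w)}(w-z)^{-\beta-1}dz$, which is of order $h\,(w-C_h(w))^{-\beta}$;
\item the first term of \eqref{eqcotaintegral} gives $h^{1-\beta}(w-C_h(w))^{-\beta}$ in the same way;
\item the Fubini weight you hope to bound by $C(w-u)^{-\beta}$ in fact contains, for each fixed $u$, a piece of size $h\,(w-C_h(w))^{-\beta}$ coming from this last cell.
\end{itemize}
Remark \ref{u-cu-beta} turns $(w-C_h(w))^{-\beta}$ into $h^{-\beta}$ only after integrating in $w$. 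Pointwise, these terms blow up as $w\downarrow C_h(w)$.

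This is intrinsic to the statement, not an artifact of your estimates. Suppose $C_h(w)\ge 2h$. For $s\in[-h,0)$ and $z$ in that last cell, the integrand equals the one-step increment $|\tilde X^h(C_h(w)-h)-\tilde X^h(C_h(w)-2h)|$, which is in general nonzero. Hence the left-hand side is at least a positive multiple of $(w-C_h(w))^{-\beta}-(w-C_h(w)+h)^{-\beta}$. Every term on the right-hand side, by contrast, stays bounded as $w\downarrow C_h(w)$. The last one only involves $(u-C_h(u))^{-\beta}$ under $\int_0^{C_h(w)}(w-u)^{-\beta}\,du$, which is bounded as in \eqref{eqcotaint1}. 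So, with $C(\omega)$ independent of $w$, the inequality is false as stated.

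The paper's own proof has the same hole. It derives $C(\omega)(1+\tilde F(C_h(w)))h^{H-\rho}(w-C_h(w))^{-\beta}$ from Lemma \ref{cota-Xtilde}, then replaces it by $h^{H-\rho-\beta}(1+\tilde F(w))$ without justification.

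What your argument, or the paper's, can actually prove is the stated bound plus an extra term $C(\omega)(1+\tilde F(C_h(w)))(h+h^{H-\rho})(w-C_h(w))^{-\beta}$. That term is harmless wherever the lemma is afterwards integrated in $w$. It does, however, break the pointwise use in \eqref{I2-bound} and the claimed boundedness of $\varphi$ in \eqref{phi}.
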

We now establish uniform bounds for the Euler scheme. These estimates are the key ingredient for the convergence analysis and rely on the previous lemmas.

\begin{theorem}
\label{teoCotasapriori}
According to Hypotheses 1 and 3, for any $\varepsilon>0$ and $0<\rho<H$ there exist $h_0>0$ and $\Omega_{\varepsilon,h_0,\rho}\subset \Omega$ such that $\mathbb{P}(\Omega_{\varepsilon,h_0,\rho})>1-\varepsilon$, and for each $\omega\in\Omega_{\varepsilon,h_0,\rho}$, $h<h_0$ and $C(\omega)>0$, such that
     $$|\Tilde{X}^h(t)|< C(\omega)$$
     and for $0\leq s < t\leq T$,
     $$|\Tilde{X}^h(t)-\Tilde{X}^h(s)|\leq C(\omega)(t-s)^{H-\rho}.$$
     
\end{theorem}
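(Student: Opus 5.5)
The plan is to follow the strategy of \citet{Mishura2008} for the memoryless Euler scheme, and to use Lemmas \ref{cotadeYtilde-Xtilde}--\ref{dif-dif} to absorb the delay term into the same Gronwall-type argument.

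\textbf{Localization.} Fix $\varepsilon>0$ and $\beta\in(1-H,1/2)$ with $\beta<H-\rho$. Choose $R_\varepsilon$ so that
\[
\Omega_{\varepsilon,h_0,\rho}=\Big\{\sup_{s}|D^{1-\beta}_{t-}B^H_{t-}(s)|\le R_\varepsilon,\ \|B^H\|_{H-\rho}\le R_\varepsilon\Big\}
\]
has probability larger than $1-\varepsilon$. On this set the random constant $C(\omega)$ in \eqref{eqcotaintegral} and in Lemmas \ref{cota-Xtilde}--\ref{dif-dif} is bounded by a deterministic constant.

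\textbf{Functional to control.} Starting from \eqref{EulerInterpolado}, I estimate $|\Tilde X^h(t)|$ using \eqref{eqcotaintegral}. The drift part is handled by \eqref{eq5} together with Lemma \ref{cotadeYtilde-Xtilde}, which gives $\int_0^t(1+\Tilde F(u))\,du$. For the stochastic integral, the first term of \eqref{eqcotaintegral} contributes $\int_0^t(1+\Tilde F(u))u^{-\beta}\,du$. The second term requires the increments of the integrand $\sigma(C_h(\cdot),\Tilde X^h(C_h(\cdot)),\Tilde Y(C_h(\cdot)))$. By \eqref{eq3}--\eqref{eq4} and Lemma \ref{cotadeDYtilde-DXtilde}, these increments reduce to:
\begin{itemize}
\item the Hölder-type integral $\int_0^{C_h(u)}|\Tilde X^h(C_h(u))-\Tilde X^h(v)|(u-v)^{-\beta-1}\,dv$, where the piece with $v$ inside the current cell vanishes, so only $v\le C_h(u)$ matters;
\item the delayed quantity $I(u)$ of Lemma \ref{Cota- I(w)}.
\end{itemize}
I therefore define
\[
G(t)=\Tilde F(t)+\sup_{u\le t}\Big(\int_0^{C_h(u)}\frac{|\Tilde X^h(C_h(u))-\Tilde X^h(v)|}{(u-v)^{\beta+1}}dv+I(u)\Big),
\]
and aim for an inequality of the form
\[
G(t)\le C\Big(1+h^{H-\rho-\beta}\Tilde F(t)+\int_0^t (t-w)^{-2\beta}G(w)\,dw\Big).
\]

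\textbf{Deriving the inequality.} This comes from Lemma \ref{X-CHU-v} combined with Lemma \ref{Cota- I(w)} and Lemma \ref{dif-dif}. The part of the delay integral with $s\in[-r,-C_h(z)]$ involves only values of $\phi$, so it is controlled by Hypothesis $\phi$. The part with $s\in[-C_h(z),0]$ reproduces the same double-integral structure, i.e. it is $G$ evaluated at earlier times. Factors such as $h^{H-\rho}(w-C_h(w))^{-\beta}$ are integrable, and by Remark \ref{u-cu-beta} they yield powers $h^{H-\rho-\beta}$ and $h^{1-\beta}$.

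\textbf{Closing the estimate.} Choose $h_0$ so small that $C h_0^{H-\rho-\beta}\le 1/2$; this absorbs the term $h^{H-\rho-\beta}\Tilde F$ into the left-hand side. Because $2\beta<1$, a fractional Gronwall lemma (Lemma 7.6 in Nualart--R\u{a}\c{s}canu) then gives $G(t)\le C(\omega)$ uniformly in $h<h_0$ and $t\le T$. In particular $|\Tilde X^h(t)|<C(\omega)$.

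\textbf{Hölder bound.} For $s<t$, write $\Tilde X^h(t)-\Tilde X^h(s)$ as the integrals over $[s,t]$ in \eqref{EulerInterpolado}.
\begin{itemize}
\item The drift part is at most $C(t-s)$.
\item For the stochastic part, the standard bound $|\int_s^t f\,dB^H|\le C(\omega)\int_s^t\big(|f(u)|(u-s)^{-\beta}+\int_s^u|f(u)-f(v)|(u-v)^{-\beta-1}dv\big)(t-u)^{\alpha-1+\beta}du$, with $\alpha=H-\rho$ from the Hölder norm of $B^H$, combined with the uniform bound on $G$ (which controls both the sup and the fractional-increment term), gives $C(\omega)(t-s)^{H-\rho}$.
\end{itemize}
Alternatively, one can split at grid points: apply Lemma \ref{cota-Xtilde} inside a single cell, and between grid points sum the Euler increments via the same integral bound.

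The main obstacle is the second step. One must verify that the delay term, through Lemmas \ref{Cota- I(w)} and \ref{dif-dif}, really closes in terms of $G$ with a singular kernel of order at most $2\beta<1$ and only $h$-small coefficients in front of $\Tilde F$. If this fails directly, I would first try to weight the sup, using $\sup_u e^{-\lambda u}G(u)$ with large $\lambda$, so that the problematic terms become absorbable.
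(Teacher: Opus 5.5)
Your overall strategy follows the paper's proof: localize, Gronwall a combined functional built from $\tilde F$, the fractional increment of $\tilde X^h$ and the delay increment via Lemmas \ref{X-CHU-v}, \ref{Cota- I(w)} and \ref{dif-dif}, then absorb the $h^{H-\rho-\beta}\tilde F$ term for $h<h_0$. The gap is that the functional $G$ you chose is infinite, so the closing step cannot be carried out.

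The map $z\mapsto\tilde X^h(C_h(z)+C_h(s))$ is piecewise constant. For a grid point $t_i\ge h$ and $z\in[t_{i-1},t_i)$, the inner integral in $I(t_i)$ is the fixed number $D_i=h\sum_{j=-N}^{-1}|\tilde X^h(t_{i+j})-\tilde X^h(t_{i+j-1})|$, which vanishes only in degenerate cases. On the other hand, $\int_{t_{i-1}}^{t_i}(t_i-z)^{-\beta-1}dz=+\infty$. So $I(t_i)=+\infty$, and for $u\in(t_i,t_{i+1})$ you have $I(u)\ge\beta^{-1}D_i\big[(u-t_i)^{-\beta}-(u-t_{i-1})^{-\beta}\big]$. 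This is exactly the term $h^{H-\rho}(w-C_h(w))^{-\beta}$ in Lemma \ref{Cota- I(w)}. Remark \ref{u-cu-beta} turns it into $h^{H-\rho-\beta}$ only under a time integral, never inside $\sup_{u\le t}$. Hence $G(t)=+\infty$ for every $t\ge t_i$ with $D_i>0$, in general for all $t\ge h$. The absorption and the Gronwall lemma are then vacuous, and an exponential weight changes nothing. Switching to the paper's $\varphi$ would not help either. Its third term is (generically) infinite at every grid point $t_i\ge 2h$ for the same reason: for $v\in[t_{i-1},t_i)$ and $s\in[-h,0)$ its integrand is $|\tilde X^h(t_{i-1})-\tilde X^h(t_{i-2})|$.

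To close the argument, run Gronwall on a quantity that is finite for fixed $h$, and let the discretized delay increments enter only under time integrals.
\begin{itemize}
\item Take for instance $\Phi(t)=\tilde F(t)+\sup_{w\le t}A(w)$ with $A(w)=\int_0^{C_h(w)}|\tilde X^h(C_h(w))-\tilde X^h(v)|(w-v)^{-\beta-1}dv$. This is finite because the interpolation is continuous and H\"older on each cell.
\item In the estimate of $|\tilde X^h(t)|$ and in Lemma \ref{X-CHU-v}, $I$ only occurs as $\int_0^tI(u)\,du$ and $\int_0^{C_h(t)}(t-u)^{-\beta}I(u)\,du$.
\item To bound $I(u)$, insert $\tilde X^h(v+C_h(s))$. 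Since $C_h(s)$ is a grid point, for fixed $s$ the $v$-integral of $|\tilde X^h(C_h(u)+C_h(s))-\tilde X^h(v+C_h(s))|(u-v)^{-\beta-1}$ over the range where both arguments are nonnegative equals $A(u+C_h(s))\le\sup_{w\le u}A(w)$.
\item The remaining range contributes at most $C(1+\tilde F(u))$. Contrary to your remark, for $s\in[-C_h(w),-C_h(z)]$ one argument is nonnegative, so that part is not controlled by $\phi$ alone.
\item The one-step errors $|\tilde X^h(v+C_h(s))-\tilde X^h(C_h(v)+C_h(s))|\le C(1+\tilde F(u))h^{H-\rho}$ (Lemma \ref{cota-Xtilde}) contribute $C(1+\tilde F(u))h^{H-\rho}(u-C_h(u))^{-\beta}$. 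This integrates to $Ch^{H-\rho-\beta}(1+\tilde F(t))$ by Remark \ref{u-cu-beta} and \eqref{eqcotaint1}.
\end{itemize}
This gives $\Phi(t)\le C\big(1+h^{H-\rho-\beta}\Phi(t)+\int_0^t(u^{-\beta}+(t-u)^{-2\beta})\Phi(u)\,du\big)$ with $\Phi<\infty$. Your localization, absorption and Gronwall step then go through.

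The same singularity affects your H\"older step. The fractional increment of the piecewise-constant integrand is not uniformly bounded, since it contains $h^{H-\rho}(u-C_h(u))^{-\beta}$. So the bound $C(t-s)^{H-\rho}$ must come from integrating it against $(t-u)^{H-\rho+\beta-1}$ cell by cell, separately for $t-s\ge h$ and $t-s<h$, not from $\sup G$. Your weighted estimate is the right tool for this, and it is more careful than the paper's appeal to \eqref{eqcotaintegral} with a merely bounded integrand.
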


\begin{proof}
From the Euler scheme \eqref{EulerInterpolado} we have the following,
\[
\Tilde{X}^h(t)
=
X(0)
+
\int_0^t b(C_h(u),\Tilde{X}^h(C_h(u)),\Tilde{Y}(C_h(u)))\,du
+
\int_0^t \sigma(C_h(u),\Tilde{X}^h(C_h(u)),\Tilde{Y}(C_h(u)))\,dB^H_u.
\]
Applying inequality \ref{eqcotaintegral} to the function
\(
f(u)
=
\sigma(C_h(u),\Tilde{X}^h(C_h(u)),\Tilde{Y}(C_h(u))),
\)
we obtain 
\begin{align*}
|\Tilde{X}^h(t)|\leq &  |X(0)|\! + \!\int_0^t \!|b(C_h(u),\Tilde{X}^h(C_h(u)),\Tilde{Y}(C_h(u)))| {d}u \\
&\!+\! C_{\beta}C(\omega)\!\!\int_0^t\! \dfrac{|\sigma(C_h(u),\Tilde{X}^h(C_h(u)),\Tilde{Y}(C_h(u)))|}{u^{\beta}}{d}u \\
&\!+\! C_{\beta}C(\omega)\!\!\int_0^t \! \int_0^u \!\dfrac{|\sigma(C_h(u),\Tilde{X}^h(C_h(u)),\Tilde{Y}(C_h(u)))-\sigma(C_h(v),\Tilde{X}^h(C_h(v)),\Tilde{Y}(C_h(v)))|}{(u-v)^{\beta+1}}{d}v{d}u.
\end{align*}

Due to the linear growth property of $b$ and $\sigma$, the Lipschitz property of $\sigma$ (see Hypothesis 1 \eqref{eqdefc0}, \eqref{eq3}. and \eqref{eq4})  we derive 
\begin{align*}
|\Tilde{X}^h(t)|  \leq & |X(0)| + \int_0^t L_3(1+|\Tilde{X}^h(C_h(u))|+|\Tilde{Y}(C_h(u))|){d}u \\
&+ C_{\beta}C(\omega)\int_0^t \dfrac{L_3(1+|\Tilde{X}^h(C_h(u))|+|\Tilde{Y}(C_h(u))|)}{u^{\beta}}{d}u \\
&+ C_{\beta}C(\omega)\int_0^t\int_0^u \dfrac{L_1|C_h(u)-C_h(v)|}{(u-v)^{\beta+1}}{d}v{d}u \\
&+ C_{\beta}C(\omega)\int_0^t\int_0^u \dfrac{L_2|\Tilde{X}^h(C_h(u))-\Tilde{X}^h(C_h(v))|+L_2|\Tilde{Y}(C_h(u))-\Tilde{Y}(C_h(v))|}{(u-v)^{\beta+1}}{d}v{d}u.
\end{align*}

Using Lemma \ref{cotadeYtilde-Xtilde} to control the delay term,
and splitting the inner integral over $[0,u]$ as
\(
[0,u] = [0,C_h(u)] \cup [C_h(u),u],
\)
we obtain
\begin{align*}
|\Tilde{X}^h(t)|
\leq & |X(0)|\!
+\! L_3 t
+ L_3\int_0^t |\Tilde{X}^h(C_h(u))|\,du
+ L_3 rK_3 t
+ L_3 rK_3 \int_0^t \Tilde{F}(u-h)\,du \\
&\!+ \!C_\beta C(\omega)L_3 \!\int_0^t \! \frac{du}{u^\beta}
+ C_\beta C(\omega)L_3 \!\int_0^t \!
\frac{|\Tilde{X}^h(C_h(u))|}{u^\beta}\,du + C_\beta C(\omega)L_3 rK_3\int_0^t 
\frac{1+\Tilde{F}(u-h)}{u^\beta}\,du \\
&\!+ \! C_\beta C(\omega)\! \int_0^t \! \int_0^u \! 
\frac{L_1|C_h(u)-C_h(v)|}{(u-v)^{\beta+1}}\,dv\,du \\
&\!+\! C_\beta C(\omega)\! \int_0^t \! \int_0^u \!
\frac{L_2|\Tilde{X}^h(C_h(u))-\Tilde{X}^h(C_h(v))|
+L_2|\Tilde{Y}(C_h(u))-\Tilde{Y}(C_h(v))|}
{(u-v)^{\beta+1}}\,dv\,du.
\end{align*}

For $v\in[C_h(u),u]$ we have $C_h(v)=C_h(u)$, so the integral over $[C_h(u),u]$ vanishes. Collecting the previous bounds, and using \[
\int_0^t u^{-\beta}du
=
\frac{t^{1-\beta}}{1-\beta},
\]
we have that
\begin{footnotesize}
\begin{align*}
&|\Tilde{X}^h(t)|
\leq  |X(0)|
+ L_3(1+rK_3)\left(T
+ C_\beta C(\omega)\frac{T^{1-\beta}}{1-\beta}\right)
+ L_3\int_0^t |\Tilde{X}^h(C_h(u))|\,du
+ L_3 rK_3 \int_0^t \Tilde{F}(u-h)\,du
\nonumber\\
&+ C_\beta C(\omega)L_3\int_0^t 
\frac{|\Tilde{X}^h(C_h(u))|}{u^\beta}\,du
+ C_\beta C(\omega)L_3 rK_3\int_0^t 
\frac{\Tilde{F}(u-h)}{u^\beta}\,du
\nonumber\\
&+ C_\beta C(\omega)
\int_0^t\int_0^{C_h(u)}
\frac{
L_1|C_h(u)-C_h(v)|
+L_2|\Tilde{X}^h(C_h(u))-\Tilde{X}^h(C_h(v))|
+L_2|\Tilde{Y}(C_h(u))-\Tilde{Y}(C_h(v))|
}{(u-v)^{\beta+1}}\,dv\,du.
\end{align*}
\end{footnotesize}

Therefore,
\begin{align}
\label{cotaXht}
&|\Tilde{X}^h(t)|\leq |X(0)| + L_3(1+rK_3)\left(T + C_{\beta}C(\omega )\dfrac{T^{1-\beta}}{1-\beta}\right) + L_3(T^{\beta} + C_{\beta}C(\omega ))\int_0^t \dfrac{|\Tilde{X}^h(C_h(u))|}{u^{\beta}}{d}u \nonumber \\
&\,+ L_3rK_3(T^{\beta} + C_{\beta}C(\omega ))\int_0^t \dfrac{\Tilde{F}(u-h)}{u^{\beta}}{d}u + C_{\beta}C(\omega )\int_0^t\int_0^{C_h(u)} \dfrac{L_1|C_h(u)-C_h(v)|}{(u-v)^{\beta+1}}{d}v{d}u \nonumber \\
&\,+ \!C_{\beta}C(\omega ) \!\! \int_0^t \! \int_0^{C_h(u)} \!\!\dfrac{L_2|\Tilde{X}^h(C_h(u))-\Tilde{X}^h(v)|}{(u-v)^{\beta+1}}{d}v{d}u \!+\!\! C_{\beta}C(\omega )\!\int_0^t \! \int_0^{C_h(u)} \!\!\dfrac{L_2|\Tilde{X}^h(v)-\Tilde{X}^h(C_h(v))|}{(u-v)^{\beta+1}}{d}v{d}u\nonumber \\
&\,+ C_{\beta}C(\omega )\int_0^t\int_0^{C_h(u)} \dfrac{L_2|\Tilde{Y}(C_h(u))-\Tilde{Y}(C_h(v))|}{(u-v)^{\beta+1}}{d}v{d}u \nonumber \\
&= C'(\beta,\omega)( 1+I_1 +  I_2 + I_3 + I_4 + I_5 +  I_6),
\end{align}
where $C'(\beta,\omega)=(L_1+L_2+L_3(1+rK_3))(1+T^{\beta})(1+C_{\beta} C(\omega))$, and
\begin{align*}\label{Is}
    I_1&=\int_0^t \dfrac{|\Tilde{X}^h(C_h(u))|}{u^{\beta}}{d}u,\quad 
    I_2=\int_0^t \dfrac{\Tilde{F}(u-h)}{u^{\beta}}{d}u,\quad
    I_3=\int_0^t\int_0^{C_h(u)} \dfrac{|C_h(u)-C_h(v)|}{(u-v)^{\beta+1}}{d}v{d}u, \nonumber \\
    I_4&=\int_0^t\int_0^{C_h(u)} \dfrac{|\Tilde{X}^h(C_h(u))-\Tilde{X}^h(v)|}{(u-v)^{\beta+1}}{d}v{d}u,\quad
    I_5=\int_0^t\int_0^{C_h(u)} \dfrac{|\Tilde{X}^h(v)-\Tilde{X}^h(C_h(v))|}{(u-v)^{\beta+1}}{d}v{d}u,\nonumber \\
    I_6&=\int_0^t\int_0^{C_h(u)} \dfrac{|\Tilde{Y}(C_h(u))-\Tilde{Y}(C_h(v))|}{(u-v)^{\beta+1}}{d}v{d}u.
\end{align*}
\\
To estimate $I_3$, we use $C_h(u)-C_h(v)\leq u-v+h$ if $u >v$, from \eqref{R6} and Remark  [\eqref{u-cu-beta} eq-\eqref{u-chu}], then is easy to see that there exists a constant $C_3>0$ depending only on $T$ and $\beta$
such that
    \begin{equation}
    \label{eqI3}
        I_3 \leq  \dfrac{1}{(1-\beta)(2-\beta)} t^{2-\beta} + \dfrac{h}{\beta} \int_0^t (u-C_h(u))^{-\beta}{d}u \leq  C_3
    \end{equation}

From Lemma \eqref{X-CHU-v}
\begin{align*}
I_{4}=&\int_0^t\int_0^{C_h(u)} |\Tilde{X}^h(C_h(u))-\Tilde{X}^h(v)|(u-v)^{-\beta-1}{d}v du \\
 \leq &C_4(\omega)\left(1  + \int_0^t \Tilde{F}(u)u^{1-2\beta}du  + \Tilde{F}(C_h(t))h^{H-\rho-\beta}  \right. \notag\\
 &+ \int_0^t\int_0^{C_h(u)} (u-w)^{-\beta}\int_0^{C_h(w)} \dfrac{|\Tilde{X}^h(C_h(w))-\Tilde{X}^h(z)|}{(w-z)^{\beta+1}}{d}z{d}w du \notag\\
 & \left. + \int_0^t\int_0^{C_h(u)} (u-w)^{-\beta}\int_0^{C_h(w)} \int_{-r}^0 \dfrac{ |\Tilde{X}^h(C_h(w)+C_h(s))-\Tilde{X}^h(C_h(z)+C_h(s))|{d}s}{(w-z)^{\beta+1}}{d}z{d}wdu\right).\notag
\end{align*}
From Lemma \eqref{Cota- I(w)}, \eqref{R6} and   Remark  [\eqref{u-cu-beta} eq-\eqref{u-chu}]
\begin{footnotesize}
\begin{align}
I_4&\leq C_4(\omega)\left(1  + \int_0^t \Tilde{F}(u)u^{1-2\beta}du  + \Tilde{F}(C_h(t))h^{H-\rho-\beta}  \right. \notag\\
 &+ \int_0^t\int_0^{C_h(u)} (u-w)^{-\beta}\int_0^{C_h(w)} \dfrac{|\Tilde{X}^h(C_h(w))-\Tilde{X}^h(z)|}{(w-z)^{\beta+1}}{d}z{d}w du \notag\\
 & \left. + \int_0^t \int_0^{C_h(u)} (u-w)^{-\beta} \left[1+ \widetilde F(C_h(w))  +  h ^{H - \rho}(w- C_h(w))^{-\beta}  +  h  (w-C_h(w))^{-\beta}  \widetilde F(C_h(w)) \right] dw du\right.\notag \\
 & + \left. \int_0^t \int_0^{C_h(u)} (u-w)^{-\beta}  \int_0^{C_h(w)} (w-z)^{-\beta-1} \int_{-C_h(z)}^{0}
\big|\widetilde X^h(C_h(w)+C_h(s))-\widetilde X^h(C_h(z)+C_h(s))\big| dsdz dwdu 
 \right) \notag\\
\label{eqI4-1}
&\leq C_4(\omega)\left(1  + \int_0^t \Tilde{F}(u)u^{1-2\beta}du  + \Tilde{F}(C_h(t))h^{H-\rho-\beta}  + \int_0^t u^{1-\beta} \widetilde F(u) du  + h ^{H - \rho - \beta} + h^{1-\beta}    \widetilde F(t) \right. \notag\\
 &+ \int_0^t\int_0^{C_h(u)} (u-w)^{-\beta}\int_0^{C_h(w)} \dfrac{|\Tilde{X}^h(C_h(w))-\Tilde{X}^h(z)|}{(w-z)^{\beta+1}}{d}z{d}w du \notag\\
 & + \left. \int_0^t \int_0^{C_h(u)} (u-w)^{-\beta}  \int_0^{C_h(w)} (w-z)^{-\beta-1} \int_{-C_h(z)}^{0}
\big|\widetilde X^h(C_h(w)+C_h(s))-\widetilde X^h(C_h(z)+C_h(s))\big| dsdz dwdu 
 \right)
\end{align}
\end{footnotesize}
  For $I_5$, we use Lemma \ref{cota-Xtilde} and \eqref{Ftilde} to deduce
    \begin{align}\label{eqI5}
        I_5&\leq C_5(\omega)\int_0^t\int_0^{C_h(u)} \dfrac{ \left(1+|\Tilde{X}^h(C_h(v))|+rK_3(1+\Tilde{F}(v-h))\right)(v-C_h(v))^{H-\rho}}{(u-v)^{\beta+1}}{d}v{d}u \nonumber \\
        &\leq C_5(\omega) h^{H - \rho}  \int_0^t\int_0^{C_h(u)} \dfrac{1+\Tilde{F}(C_h(v))+rK_3(1+\Tilde{F}(v-h))}{(u-v)^{\beta+1}}{d}v{d}u \nonumber \\
         &\leq C_5(\omega)h^{H-\rho-\beta}   + C(\omega)  h^{H-\rho}\int_0^t\int_0^{C_h(u)} \dfrac{\Tilde{F}(C_h(v))}{(u-v)^{\beta+1}}{d}v{d}u \nonumber \\
         & \leq C_5(\omega)h^{H-\rho-\beta}   + C(\omega)  h^{H-\rho - \beta} \Tilde{F}(t). 
        \end{align}
We now analyze $I_6$, using Lemma \ref{cotadeDYtilde-DXtilde} and the upper bound of $I_3$ given in \eqref{eqI3}, we can get 

    \begin{align*}
        I_6 \leq & \int_{0}^t\int_{0}^{C_h(u)} \dfrac{rK_1|C_h(u)-C_h(v)| + K_2 \int_{-r}^{0} |\Tilde{X}^h(C_h(u)+C_h(s))-\Tilde{X}^h(C_h(v)+C_h(s))|{d}s}{(u-v)^{\beta+1}}{d}v{d}u \nonumber \\
        \leq & rK_1I_3 + K_2\int_{0}^t\int_{0}^{C_h(u)} \int_{-r}^{0} \dfrac{|\Tilde{X}^h(C_h(u)+C_h(s))-\Tilde{X}^h(C_h(v)+C_h(s))|}{(u-v)^{\beta+1}}{d}s{d}v{d}u \nonumber \\
       \leq & C + K_2\int_{0}^t\int_{0}^{C_h(u)} \int_{-r}^{0} \dfrac{|\Tilde{X}^h(C_h(u)+C_h(s))-\Tilde{X}^h(C_h(v)+C_h(s))|}{(u-v)^{\beta+1}}{d}s{d}v{d}u.
    \end{align*}
Now, from Lemma  \eqref{Cota- I(w)} 
\begin{align*}
 I_6 &\leq C_6(\omega) \left( 1+ \int_0^t \widetilde F(C_h(u)) du  +  h ^{H - \rho} \int_0^t (u- C_h(u))^{-\beta} du  \right. \nonumber \\
 &+  h \int_0^t  (u-C_h(u))^{-\beta}   \widetilde F(C_h(u)) du  \nonumber \\
& + \left.
\int_0^t \int_0^{C_h(u)} (u-z)^{-\beta-1} \int_{-C_h(z)}^{0}
\big|\widetilde X^h(C_h(u)+C_h(s))-\widetilde X^h(C_h(z)+C_h(s))\big| dsdz du
\right).
\end{align*}
From  Remark [\eqref{u-cu-beta} eq-\eqref{u-chu}]
\begin{align*}
 I_6 &\leq C_6(\omega) \left( 1+ \int_0^t \widetilde F(C_h(u)) du  +  h ^{H - \rho - \beta} +  h^{1-\beta} \widetilde F(C_h(t)) \right.  \nonumber \\
& + \left.
\int_0^t \int_0^{C_h(u)} (u-z)^{-\beta-1} \int_{-C_h(z)}^{0}
\big|\widetilde X^h(C_h(u)+C_h(s))-\widetilde X^h(C_h(z)+C_h(s))\big| dsdz du
\right).
\end{align*}
From Lemma \eqref{dif-dif}
\begin{footnotesize}
\begin{align}
 \label{eqI6}
& I_6 \leq C_6(\omega) \left( 1+ \int_0^t \widetilde F(C_h(u)) du  +  h ^{H - \rho - \beta} \widetilde F(C_h(t))  +  h^{1-\beta} \widetilde F(C_h(t)) +  \int_0^t \widetilde F(C_h(w)) w^{1-\beta}dw \right.  \nonumber \\
& + \int_0^t \int_{0}^{C_h(u)} \frac{|\Tilde{F}^h(C_h(z))|}{(u-z)^{2\beta}}{d}z du  \nonumber \\
&+ \int_0^t \int_{0}^{C_h(u)} \int_{0}^{C_h(w)}
(u-w)^{-\beta}\,
\frac{\big|\tilde{X}^h(C_h(w)-\tilde{X}^h(v)\big|}{(w-v)^{\beta+1}}
dvdwdu \nonumber \\ 
&  +  \left. \int_0^t \int_{0}^{C_h(u)} (u-w)^{-\beta}\, 
\int_0^{C_h(w)} (w-v)^{-\beta-1} \int_{-C_h(v)}^{0}
\big|\widetilde X^h(C_h(w)+C_h(s))-\widetilde X^h(C_h(v)+C_h(s))\big| dsdvdwdu\right)
\end{align}
\end{footnotesize}

Using \eqref{cotaXht}, \eqref{eqI3}, \eqref{eqI4-1}, \eqref{eqI5}, \eqref{eqI6}  \eqref{u-chu} and from \eqref{Ftilde} and Equation (8) in \citep{Mishura2008}, considering $H-\rho -\beta >0$,
 we can obtain
\begin{align}\label{eq13-1}
& \tilde{F}(t) \leq C_0(\omega)\left(1 + h^{H-\rho - \beta} \tilde{F}(t) + \int_0^t \dfrac{|\tilde{X}^h(C_h(u))|}{u^{\beta}}\,du + \int_0^t\dfrac{\tilde{F}(u)}{u^{\beta}}\,du \right.  \\
& +  \int_0^t\!\!\int_{0}^{C_h(u)}\!\!\!\! \int_{0}^{C_h(w)}(u-w)^{-\beta}\,
\frac{\big|\tilde{X}^h(C_h(w))-\tilde{X}^h(v)\big|}{(w-v)^{\beta+1}}
dvdwdu 
\nonumber \\ 
&+\left.\!\!\int_0^t \!\!\int_{0}^{C_h(u)}\!\!\!\!\! (u-w)^{-\beta}\!\!\!
\int_0^{C_h(w)} \!\!\!\!\!(w-v)^{-\beta-1}\!\!\!\int_{-C_h(v)}^{0}\!\!\!\!
\big|\widetilde X^h(C_h(w)+C_h(s))-\widetilde X^h(C_h(v)+C_h(s))\big| dsdvdwdu  \right)\nonumber
\end{align}
By Lemma \ref{X-CHU-v} ,
\begin{align}\label{I1-bound}
&\int_0^{C_h(t)} 
\frac{|\tilde{X}^h(C_h(t))-\tilde{X}^h(v)|}{(t-v)^{\beta+1}}\,dv
\nonumber\\
&\le
C_0(\omega)\Big(
1
+ h^{H-\rho-\beta}\tilde F(t)
+ \int_0^{C_h(t)} (t-w)^{-\beta}\tilde F(w)dw
\\
&+ \int_0^{C_h(t)} (t-w)^{-\beta}
\int_0^{C_h(w)}
\frac{|\tilde X^h(C_h(w))-\tilde X^h(z)|}{(w-z)^{\beta+1}}dz\,dw
\nonumber\\
&+ \int_0^{C_h(t)} (t-w)^{-\beta}
\int_0^{C_h(w)}
\int_{-C_h(z)}^0
\frac{|\tilde X^h(C_h(w)+C_h(s))-\tilde X^h(C_h(z)+C_h(s))|}
{(w-z)^{\beta+1}}ds\,dz\,dw
\Big).
\nonumber
\end{align}
From Lemma \ref{dif-dif},
\begin{align}\label{I2-bound}
&\int_0^{C_h(t)} \int_{-C_h(v)}^0 
\frac{|\tilde{X}^h(C_h(t)+C_h(s))-\tilde{X}^h(C_h(v)+C_h(s))|}
{(t-v)^{\beta+1}}
\,ds\,dv\nonumber\\
&\le
C_0(\omega)\Big(
1
+ h^{H-\rho-\beta}\tilde F(t)
+ \int_0^{C_h(t)} (t-u)^{-\beta}\tilde F(u)du
\\
&+ \int_0^{C_h(t)} (t-u)^{-\beta}
\int_0^{C_h(u)}
\frac{|\tilde X^h(C_h(u))-\tilde X^h(v)|}{(u-v)^{\beta+1}}dv\,du
\nonumber\\
&+ \int_0^{C_h(t)} (t-u)^{-\beta}
\int_0^{C_h(u)}
\int_{-C_h(v)}^0
\frac{|\tilde X^h(C_h(u)+C_h(s))-\tilde X^h(C_h(v)+C_h(s))|}
{(u-v)^{\beta+1}}ds\,dv\,du
\Big).
\nonumber
\end{align}

We define the function $\varphi(t)$ by
\begin{align}\label{phi}
\varphi(t):=&\tilde{F}(t)
+ \int_0^{C_h(t)} 
\frac{|\tilde{X}^h(C_h(t))-\tilde{X}^h(v)|}{(t-v)^{\beta+1}}\,dv
\nonumber\\
&+ \int_0^{C_h(t)} \int_{-C_h(v)}^0 
\frac{|\tilde{X}^h(C_h(t)+C_h(s))-\tilde{X}^h(C_h(v)+C_h(s))|}
{(t-v)^{\beta+1}}
\,ds\,dv.
\end{align}

Summing \eqref{eq13-1}, \eqref{I1-bound} and \eqref{I2-bound}, we obtain
\begin{align}
&\varphi(t)
\le
C_0(\omega)\Bigg(
1
+ h^{H-\rho-\beta}\tilde F(t)
+ \int_0^t u^{-\beta}\tilde F(u)du
+ \int_0^t (t-u)^{-\beta}\tilde F(u)du\nonumber\\
&+ \int_0^t u^{-\beta}
\int_0^{C_h(u)}
\frac{|\tilde X^h(C_h(u))-\tilde X^h(v)|}{(u-v)^{\beta+1}}dv\,du
\nonumber\\
&+ \int_0^t (t-u)^{-\beta}
\int_0^{C_h(u)}
\frac{|\tilde X^h(C_h(u))-\tilde X^h(v)|}{(u-v)^{\beta+1}}dv\,du
\nonumber\\
&+ \int_0^t u^{-\beta}
\int_0^{C_h(u)}
\int_{-C_h(v)}^0
\frac{|\tilde X^h(C_h(u)+C_h(s))-\tilde X^h(C_h(v)+C_h(s))|}
{(u-v)^{\beta+1}}ds\,dv\,du
\nonumber\\
&+ \int_0^t (t-u)^{-\beta}
\int_0^{C_h(u)}
\int_{-C_h(v)}^0
\frac{|\tilde X^h(C_h(u)+C_h(s))-\tilde X^h(C_h(v)+C_h(s))|}
{(u-v)^{\beta+1}}ds\,dv\,du
\Bigg).
\nonumber
\end{align}

Hence
\begin{align*}
\varphi(t)
\le
C_0(\omega)\Big(
1
+ h^{H-\rho-\beta}\tilde F(t)
+ \int_0^t (u^{-\beta}+(t-u)^{-\beta})\varphi(u)du
\Big).
\end{align*}
By definition of $\varphi(t)$, we have \(
\tilde{F}(t)\le \varphi(t)
\). Then
\begin{align*}
\varphi(t)
\le
C_0(\omega)\Big(
1
+ h^{H-\rho-\beta} \varphi(t)
+ \int_0^t (u^{-\beta}+(t-u)^{-\beta})\varphi(u)du
\Big).
\end{align*}

Since $H-\rho-\beta>0$, we may choose $h_0>0$ and a set
$\Omega_{\varepsilon,h_0,\rho}$ with
$\mathbb P(\Omega_{\varepsilon,h_0,\rho})>1-\varepsilon$
such that for $h<h_0$ and $\omega\in\Omega_{\varepsilon,h_0,\rho}$,
\[
C_0(\omega)h^{H-\rho-\beta}\le \frac12.
\]
Therefore,
\begin{align*}
\varphi(t)
\le
C_0(\omega)\Bigg(1
+
\int_0^t (u^{-\beta}+(t-u)^{-\beta})\varphi(u)du\Bigg).
\end{align*}

The kernel $u^{-\beta}+(t-u)^{-\beta}$ is integrable on $[0,t]$
for $\beta<1$. By the fractional Gronwall inequality (see Equation (8) in \citep{Mishura2008}), there exists $C(\omega)>0$ such that
\[
\sup_{t\in[0,T]}\varphi(t)\le C(\omega).
\]

In particular,
\[
|\tilde X^h(t)|\le C(\omega),
\quad \forall t\in[0,T].
\]

Let $0\le s<t\le T$. Subtracting \eqref{EulerInterpolado}
at times $t$ and $s$, we obtain
\begin{align*}
\tilde X^h(t)-\tilde X^h(s)
&=
\int_s^t b(C_h(u),\tilde X^h(C_h(u)),\tilde Y(C_h(u)))\,du \\
&\quad
+
\int_s^t \sigma(C_h(u),\tilde X^h(C_h(u)),\tilde Y(C_h(u)))\,dB^H(u).
\end{align*}

We have established the uniform bound of $\tilde X^h$, and of $\tilde Y$ follows from Lemma \ref{cotadeYtilde-Xtilde}.

Because the coefficients $b$ and $\sigma$ have linear growth,
there exists $C'(\omega)>0$ such that for all $u\in[0,T]$,
\begin{align*}
    |b(C_h(u),\tilde X^h(C_h(u)),\tilde Y(C_h(u)))| &\leq L_3(1+|\tilde{X}^h(t)|+|\tilde{Y}(t)|) \leq C'(\omega) \\
    |\sigma(t,\tilde{X}^h(t),\tilde{Y}(t))| &\leq L_3(1+|\tilde{X}^h(t)|+|\tilde{Y}(t)|) \leq C'(\omega).
\end{align*}

Since the integrand is bounded, we have
\[
\left|
\int_s^t b(C_h(u),\tilde X^h(C_h(u)),\tilde Y(C_h(u)))\,du
\right|
\le
C'(\omega)(t-s).
\]

The integrand
\[
u \mapsto \sigma(C_h(u),\tilde X^h(C_h(u)),\tilde Y(C_h(u)))
\]
is uniformly bounded. Moreover, almost surely,
$B^H$ is $(H-\rho)$-Hölder continuous. Using the fractional integral estimate, inequality \ref{eqcotaintegral}, we obtain
\[
\left|
\int_s^t \sigma(C_h(u),\tilde X^h(C_h(u)),\tilde Y(C_h(u)))\,dB^H(u)
\right|
\le
C(\omega)(t-s)^{H-\rho}.
\]

Combining both estimates yields
\[
|\tilde X^h(t)-\tilde X^h(s)|
\le
C(\omega)\big[(t-s)+(t-s)^{H-\rho}\big].
\]

Since $H\in(1/2,1)$ and $\rho>0$ is chosen such that $H-\rho>0$,
we have $H-\rho<1$, hence $1-(H-\rho)>0$. Therefore,
for $0<t-s\le T$,
\[
(t-s)
=
(t-s)^{H-\rho}(t-s)^{1-(H-\rho)}
\le
T^{1-(H-\rho)}(t-s)^{H-\rho}.
\]

Consequently, for $C(\omega)=T^{1-(H-\rho)}C'(\omega)$, we obtain
\[
|\tilde X^h(t)-\tilde X^h(s)|
\le
C(\omega)(t-s)^{H-\rho},
\qquad 0\le s<t\le T.
\]
\end{proof}

The following corollary is an immediate consequence of Lemma \ref{cotadeDYtilde-DXtilde} and Theorem \ref{teoCotasapriori}. It will be used in the following section.
\begin{corollary}
\label{cor1}
    Let ${Y}$ and $\Tilde{Y}$ defined by \eqref{eq2} and \eqref{AproxY}, respectively. Under hypotheses  of Theorem \ref{teoCotasapriori}, for $0\leq v\leq u\leq T$,
\begin{equation*}
    |{Y}(u)-{Y}(v)| + |\Tilde{Y}(u)-\Tilde{Y}(v)|  \leq  rK_1|u-v| + rK_2C(\omega)|u-v|^{H-\rho}.
    \end{equation*}
\end{corollary}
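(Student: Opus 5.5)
The bound splits into two independent estimates, one for $Y$ and one for $\Tilde{Y}$, which we add at the end. We treat $\Tilde{Y}$ first since the corollary is advertised as following from Lemma \ref{cotadeDYtilde-DXtilde} and Theorem \ref{teoCotasapriori}.

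For $\Tilde{Y}$, Lemma \ref{cotadeDYtilde-DXtilde} gives, for $0\le v\le u\le T$,
\[
|\Tilde{Y}(u)-\Tilde{Y}(v)| \le rK_1|u-v| + K_2\int_{-r}^0 |\Tilde{X}^h(u+C_h(s))-\Tilde{X}^h(v+C_h(s))|\,ds .
\]
For each $s\in[-r,0]$ the two arguments $u+C_h(s)$ and $v+C_h(s)$ differ by exactly $u-v$. Both lie in $[-r,T]$.

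\begin{itemize}
\item If both arguments are nonnegative, the H\"older part of Theorem \ref{teoCotasapriori} bounds the integrand by $C(\omega)|u-v|^{H-\rho}$ on $\Omega_{\varepsilon,h_0,\rho}$ with $h<h_0$.
\item If both are negative, $\Tilde{X}^h$ is defined only at grid points there. The natural interpretation is $\Tilde{X}^h=\phi$ (extended, e.g., by interpolation), and Hypothesis $\phi$ \eqref{eqphi} gives the bound $L|u-v|^{H-\rho}$.
\item If the arguments straddle $0$, we use the triangle inequality through the point $0$ and combine the two previous cases. Each piece is at most $|u-v|$ in length, so we get at most $(C(\omega)+L)|u-v|^{H-\rho}$.
\end{itemize}

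Integrating over $s\in[-r,0]$ produces the factor $r$. Absorbing $L$ into $C(\omega)$ gives
\[
|\Tilde{Y}(u)-\Tilde{Y}(v)|\le rK_1|u-v|+rK_2C(\omega)|u-v|^{H-\rho}.
\]

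For $Y$, Lemma \ref{lemmaY} is not sharp enough in the desired form: it leaves the term $\int_{s-r}^{t-r}|X|$, which is only of order $|u-v|$. Instead we argue directly from the first representation in \eqref{eq2},
\[
Y(u)-Y(v)=\int_{-r}^0 \big[K(u,s,X(u+s))-K(v,s,X(v+s))\big]\,ds .
\]
We split the integrand as $K(u,s,X(u+s))-K(v,s,X(u+s))$ plus $K(v,s,X(u+s))-K(v,s,X(v+s))$. The first three-line inequality of Hypothesis 3 bounds the time difference by $K_1|u-v|$. The Lipschitz bound in $x$ bounds the second piece by $K_2|X(u+s)-X(v+s)|$. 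The H\"older estimate of Proposition \ref{cotasapriori} and Theorem \ref{texistenciax}, together with Hypothesis $\phi$ on $[-r,0]$, bounds $|X(u+s)-X(v+s)|$ by $C(\omega)|u-v|^{H-\rho}$, with straddling arguments handled as before. Integrating over $s$ gives
\[
|Y(u)-Y(v)|\le rK_1|u-v|+rK_2C(\omega)|u-v|^{H-\rho}.
\]

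Adding the two estimates, and renaming $2C(\omega)$ and $2rK_1$ as generic constants in the paper's convention, yields the claim. We may also take the intersection of the good events for $X$ and $\Tilde{X}^h$, each of probability exceeding $1-\varepsilon/2$.

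The main obstacle is bookkeeping rather than analysis. We need the constant $C(\omega)$ uniform in $h$, which is exactly what Theorem \ref{teoCotasapriori} provides. We also need care with arguments falling in $[-r,0]$, where $\Tilde{X}^h$ is given only at grid points by $\phi(t_n)$. Since $C_h(s)$ is a grid point, $u+C_h(s)$ need not be one when $u$ is not. There we rely on the convention that $\Tilde{X}^h$ coincides with $\phi$ (or its piecewise-linear interpolation, which inherits the $(H-\rho)$-H\"older bound with constant $3L$) on $[-r,0]$.
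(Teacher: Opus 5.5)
Your argument is correct and is essentially the paper's route. The $\Tilde{Y}$ part is Lemma \ref{cotadeDYtilde-DXtilde} combined with the $h$-uniform H\"older bound of Theorem \ref{teoCotasapriori}, and the $Y$ part is the same Lipschitz splitting of $K$ that the paper uses in \eqref{CA-Y-2-}, followed by the H\"older continuity of $X$; your handling of arguments in $[-r,0]$ through $\Tilde{X}^h=\phi$ matches the convention the paper invokes in the proof of Lemma \ref{lemcotaDY}. The only cosmetic point is that adding the two estimates produces $2rK_1|u-v|$, and $rK_1$ is not a generic constant, so rather than ``renaming'' it you should absorb the surplus via $rK_1|u-v|\le rK_1T^{1-(H-\rho)}|u-v|^{H-\rho}$ into the $rK_2C(\omega)|u-v|^{H-\rho}$ term.
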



\section{Convergence of the Euler scheme}
\label{convergence}
In this section,  we prove that the Euler scheme introduced in Section \ref{Seuler} converges to the solution of \eqref{eq1}.

To quantify the error between the exact solution and its approximation, we introduce increment-type quantities measuring their local discrepancies.

 Denoting
\begin{equation}
\label{Delta}
    \Delta_{u,s}(X,\Tilde{X}^h)=|X(s)-\Tilde{X}^h(s)-X(u)+\Tilde{X}^h(u)|
\end{equation}
  and
\begin{equation}
\label{DeltaY}
    \Delta_{u,s}(Y,\Tilde{Y})=|Y(s)-\Tilde{Y}(s)-Y(u)+\Tilde{Y}(u)|.
\end{equation}  

\begin{theorem}
\label{teoconv}
    Under Hypotheses 1, 2 and 3, for  any $\varepsilon>0$ and small enough $\rho>0$,  there exists $h_0>0$ and $\Omega_{\varepsilon,h_0,\rho}\subset \Omega$ such that $\mathbb{P}(\Omega_{\varepsilon,h_0,\rho})>1-\varepsilon$ and for each $\omega\in\Omega_{\varepsilon,h_0,\rho}$, $h<h_0$
\begin{align}\label{eq-U}
   U_h:=&\sup_{y\in[0,T]} \left(|X(y)-\Tilde{X}^h(y)|+\int_0^{y} \frac{ \Delta_{v,y}(X,\Tilde{X}^h)} {(y-v)^{\beta + 1} } dv + \int_0^{C_h(y)}\int_{-C_h(v)}^0\frac{\Delta_{v+s,y+s}(X,\Tilde{X}^h)}{(y-v)^{\beta+1}}dsdv\right)\nonumber\\
  \leq& C(\omega)h^{2H-1-\beta}, 
\end{align}
for $\beta\in (1-H, 1/2)$, where $C(\omega)$ does not depend on $h$ and $\varepsilon,$ but depends on $\rho.$
\end{theorem}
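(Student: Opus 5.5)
The plan mirrors the a priori argument of Theorem~\ref{teoCotasapriori}, now applied to the error process $Z:=X-\Tilde{X}^h$. We work on the intersection of the sets given by Proposition~\ref{cotasapriori} and Theorem~\ref{teoCotasapriori}, intersected with a set on which $C(\omega)$ in \eqref{eqdefc0} is bounded. Its probability exceeds $1-\varepsilon$, and on it $X,\Tilde{X}^h,Y,\Tilde{Y}$ are uniformly bounded and $(H-\rho)$-Hölder, with Hölder constants for $Y,\Tilde{Y}$ supplied by Corollary~\ref{cor1}. Subtracting \eqref{EulerInterpolado} from \eqref{eq1} gives
\[
Z(t)=\int_0^t\big[b(u,X(u),Y(u))-b(C_h(u),\Tilde{X}^h(C_h(u)),\Tilde{Y}(C_h(u)))\big]du+\int_0^t \big[\sigma(u,X(u),Y(u))-\sigma(C_h(u),\Tilde{X}^h(C_h(u)),\Tilde{Y}(C_h(u)))\big]dB^H(u).
\]
In each bracket we split into a \emph{consistency} part, namely the difference of $b$ or $\sigma$ between $(u,X(u),Y(u))$ and $(C_h(u),X(C_h(u)),Y(C_h(u)))$ plus the quadrature error $Y(C_h(u))-\Tilde{Y}_X(C_h(u))$, where $\Tilde{Y}_X$ denotes the Riemann sum \eqref{AproxY} evaluated along the exact $X$. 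The remaining part is a \emph{stability} term depending on $Z(C_h(u))$ and on $\Tilde{Y}_X-\Tilde{Y}$; the latter is bounded through Hypothesis~3 by $K_2h\sum_i|Z(C_h(u)+t_i)|$.

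For the consistency terms we use \eqref{eqcotaintegral}. The sup part is of order $h^{H-\rho}$ by \eqref{eq3}, \eqref{eq4}, Proposition~\ref{cotasapriori} and Corollary~\ref{cor1}. The quadrature error is $O(h^{H-\rho})$ by Hypothesis~3 and the Hölder continuity of $X$. The double integral $\int\!\int |f(u)-f(v)|(u-v)^{-\beta-1}$ is where the rate is lost. Increments of $\sigma(u,X(u),Y(u))-\sigma(C_h(u),\dots)$ are handled with the Fréchet regularity of Hypothesis~2, through a mean-value (second-order difference) bound of the form $|g(a_1)-g(a_2)-g(a_3)+g(a_4)|\le M_1|a_1-a_2-a_3+a_4|+M_2|a_1-a_3|(|a_1-a_2|+|a_3-a_4|)$. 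Combined with Remark~\ref{u-cu-beta}, this gives contributions like $h^{H-\rho}\cdot h^{H-\rho-\beta}$ and $h^{1-\beta}$. Choosing $\rho$ small, $h^{2H-2\rho-\beta}\le h^{2H-1-\beta}$, and likewise $h^{1-\beta}\le h^{2H-1-\beta}$. This is why the stated rate $2H-1-\beta$ appears and $\rho$ must be small.

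For the stability terms, the same four-point inequality turns $\int_0^{y}|f(y)-f(v)|(y-v)^{-\beta-1}dv$ into $M_1\int_0^y\Delta_{v,y}(X,\Tilde{X}^h)(y-v)^{-\beta-1}dv$, plus products of $\sup|Z|$ with Hölder increments of $X$ and $\Tilde{X}^h$, which are integrable thanks to $\beta<H-\rho$. The difference $Y-\Tilde{Y}$ produces, via Lemma~\ref{cotadeDYtilde-DXtilde} applied to differences, exactly the delayed term $\int_0^{C_h(y)}\int_{-C_h(v)}^0\Delta_{v+s,y+s}(X,\Tilde{X}^h)\,ds\,dv\,(y-v)^{-\beta-1}$. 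The portion with $v+s<0$ vanishes since both processes equal $\phi$ on the grid, up to an $O(h^{H-\rho})$ interpolation mismatch. This is why the third term is built into $U_h$.

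We then repeat the step leading to \eqref{I1-bound}--\eqref{I2-bound}, now for the increments of $Z$ themselves: $Z(y)-Z(v)$ is the integral over $[v,y]$, to which \eqref{eqcotaintegral} is applied again. Setting $\psi(y)$ to be the quantity inside the supremum in \eqref{eq-U} (with sup over $[0,y]$ for the first term), we aim for
\[
\psi(t)\le C(\omega)\Big(h^{2H-1-\beta}+h^{H-\rho-\beta}\psi(t)+\int_0^t\big(u^{-\beta}+(t-u)^{-\beta}\big)\psi(u)\,du\Big).
\]
Absorbing the middle term for $h<h_0$ and applying the fractional Gronwall lemma (Equation (8) in \citep{Mishura2008}) yields $U_h\le C(\omega)h^{2H-1-\beta}$.

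The main obstacle is the double-integral consistency estimate for $\sigma$ along $u\mapsto(u,X(u),Y(u))$ versus its frozen version. This is the term that is genuinely second order and requires Hypothesis~2. The key computation is showing that, for $v<C_h(u)$,
\[
\big|X(u)-X(C_h(u))-X(v)+X(C_h(v))\big|\lesssim h^{H-\rho}\wedge (u-v)^{H-\rho}.
\]
Integrated against $(u-v)^{-\beta-1}$, this gives the order $h^{2H-1-\beta}$ rather than $h^{H-\rho-\beta}$. The delay analogue, for $Y$ versus its Riemann sum, must also be tracked carefully.
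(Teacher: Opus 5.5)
Your overall architecture matches the paper's. You use the same error functional (the sup error $Z^h$, the $\beta$-seminorm $\theta$ of $X-\tilde X^h$, and the delayed seminorm $\kappa$ of \eqref{kappa}), the four-point inequality of Corollary~\ref{Cor-cota-nualart} on stability terms, and a fractional Gronwall lemma. You split through the frozen exact solution and a Riemann sum $\tilde Y_X$ along $X$, whereas the paper splits through $(u,\tilde X^h(u),\tilde Y(u))$ and $(C_h(u),\tilde X^h(u),\tilde Y(u))$ (the terms $I_1$--$I_6$ and $J_1$--$J_9$). That difference is only bookkeeping, and the proof goes through once one point is corrected.

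The point to correct is your account of the rate, and with it the step you call the main obstacle; your comparison of exponents is inverted.
\begin{itemize}
\item \textbf{The crude bound already suffices.} For $\rho<1-H$ you have $H-\rho-\beta>2H-1-\beta$, so $h^{H-\rho-\beta}\le h^{2H-1-\beta}$ for $h<1$. The crude first-order bound is \emph{stronger} than the target, not weaker. This is exactly how the paper finishes. Every source term ($I_5$, $I_6$, $J_8$, $J_9$, $\tilde{J}_8$, $\tilde{J}_9$, $A_1$--$A_6$, \dots) is bounded crudely by $h$, $h^{H-\rho}$, $h^{1-\beta}$ or $h^{H-\rho-\beta}$, and the last line uses $H-\rho>2H-1$.
\item \textbf{The gain you claim does not exist.} The bound $h^{H-\rho}\wedge(u-v)^{H-\rho}$, integrated against $(u-v)^{-\beta-1}$, gives $h^{H-\rho}(u-C_h(u))^{-\beta}+(u-C_h(u))^{H-\rho-\beta}$. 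After Remark~\ref{u-cu-beta} this is $O(h^{H-\rho-\beta})$, not better.
\item \textbf{Hypothesis~2 cannot improve the consistency term.} The linear term $|X(u)-X(C_h(u))-X(v)+X(C_h(v))|$ in your own four-point bound is of that size in general. For $\sigma(t,x,y)=x$ it is a difference of two increments of size about $h^{H}$ over different cells. So the consistency double integral cannot become $h^{2H-2\rho-\beta}$, and you should drop that refinement.
\item \textbf{Where Hypothesis~2 is actually needed.} It is needed only where you also use it: on the increments of $\sigma(u,X,Y)-\sigma(u,\tilde X^h,\tilde Y)$ (the paper's $I_{4-2}$, $J_7$, $\tilde{J}_7$). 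It also enters, through the analogous inequality for $K$, in Lemma~\ref{lemcotaDY}.
\item \textbf{Why $\rho$ must be small.} The smallness of $\rho$ is needed for $H-\rho-\beta>0$ and $\rho<1-H$, not for $\rho\le 1/2$.
\end{itemize}

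Two smaller points. First, the weighted term $\int_v^u|f(s)|(s-v)^{-\beta}\,ds$, once integrated against $(u-v)^{-\beta-1}$, produces a kernel $(u-s)^{-2\beta}$ (see \eqref{Co} and $J_4$). Your Gronwall inequality omits this kernel; it is harmless because $\beta<1/2$. Second, in the delayed term the region $v+s<0\le u+s$ does not vanish. There $X(v+s)=\tilde X^h(v+s)=\phi(v+s)$, so the increment is bounded by $Z^h(u)$ plus grid-mismatch terms. This is a stability contribution (cf.\ $A_{5-1}$ and $M_2$), not merely an $O(h^{H-\rho})$ mismatch.
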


The proof is based on controlling the difference between the exact solution and the numerical scheme through increment-type quantities. In particular, we introduce suitable measures of local error that allow us to handle both the fractional integral terms and the delay contribution in a unified way. The control of the delay terms follows the structural estimate of the functional $Y$ given in Lemma \ref{lemmaY}, together with its discrete counterpart.

Before proving Theorem \ref{teoconv}, we establish a sequence of technical lemmas controlling separately each component of the error.

\begin{lemma}
\label{lemZh}
 Under the conditions of Theorem \ref{teoconv}, taking
    \begin{equation}
    \label{Zh}
       Z^h(t):=\sup_{y\in[0,t]}|X(y)-\Tilde{X}^h(y)|. 
    \end{equation}
Then   
\begin{align*}
    Z^h(t)     &\leq C(\omega)(h^{H-\rho-\beta} + \int_0^t Z^h(u)u^{-\beta}{d}u + \int_0^t\int_0^u \Delta_{v,u}(X,\Tilde{X}^h)(u-v)^{-\beta-1}{d}v{d}u \nonumber\\
    & \qquad \qquad +  \int_0^t\int_0^u \Delta_{v,u}(Y,\Tilde{Y})(u-v)^{-\beta-1}{d}v{d}u).
\end{align*}
\end{lemma}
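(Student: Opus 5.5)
We subtract the integral representations \eqref{eq1} and \eqref{EulerInterpolado}. For $y\in[0,t]$ this gives
\begin{align*}
X(y)-\Tilde{X}^h(y)=&\int_0^y \big[b(u,X(u),Y(u))-b(C_h(u),\Tilde{X}^h(C_h(u)),\Tilde{Y}(C_h(u)))\big]du\\
&+\int_0^y \big[\sigma(u,X(u),Y(u))-\sigma(C_h(u),\Tilde{X}^h(C_h(u)),\Tilde{Y}(C_h(u)))\big]dB^H(u).
\end{align*}
We insert the intermediate quantities $b(u,\Tilde{X}^h(u),\Tilde{Y}(u))$ and $\sigma(u,\Tilde{X}^h(u),\Tilde{Y}(u))$. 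Each integrand then splits into an \emph{error part} and a \emph{discretization part}:
\begin{align*}
g(u)&:=\sigma(u,X(u),Y(u))-\sigma(u,\Tilde{X}^h(u),\Tilde{Y}(u)),\\
d(u)&:=\sigma(u,\Tilde{X}^h(u),\Tilde{Y}(u))-\sigma(C_h(u),\Tilde{X}^h(C_h(u)),\Tilde{Y}(C_h(u))),
\end{align*}
with analogous parts for $b$. We then apply \eqref{eqcotaintegral} separately to $\int_0^y g\,dB^H$ and $\int_0^y d\,dB^H$, take the supremum over $y\le t$, and note that all resulting integrands are nonnegative, so the upper limit $y$ can be replaced by $t$.

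\textbf{Discretization part.} By \eqref{eq3}, \eqref{eq4}, Theorem \ref{teoCotasapriori} and Corollary \ref{cor1}, we have $|d(u)|\le C(\omega)(u-C_h(u))^{H-\rho}\le C(\omega)h^{H-\rho}$. So the drift contribution and the $\int|d(u)|u^{-\beta}du$ term are $O(h^{H-\rho})$. For the double integral $\int_0^t\int_0^u |d(u)-d(v)|(u-v)^{-\beta-1}dv\,du$ we use two bounds:
\begin{itemize}
\item when $u-v\le h$, the Hölder bounds give $|d(u)-d(v)|\le C(\omega)(u-v)^{H-\rho}$, since both $\Tilde{X}^h$ and $\Tilde{Y}$ are $(H-\rho)$-Hölder uniformly in $h$;
\item when $u-v>h$, we use $|d(u)|+|d(v)|\le C(\omega)h^{H-\rho}$.
\end{itemize}
Integrating, the first region contributes $h^{H-\rho-\beta}$ and the second contributes $h^{H-\rho}\int_h^\infty x^{-\beta-1}dx\sim h^{H-\rho-\beta}$. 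Altogether this part is $\le C(\omega)h^{H-\rho-\beta}$, using $H-\rho-\beta>0$. The drift discretization part is handled the same way and is $O(h^{H-\rho})$.

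\textbf{Error part.} By the Lipschitz property \eqref{eq4}, $|g(u)|\le L_2(|X(u)-\Tilde{X}^h(u)|+|Y(u)-\Tilde{Y}(u)|)$. For $|Y-\Tilde{Y}|$ we split
\begin{equation*}
Y(u)-\Tilde{Y}(u)=\Big(\int_{-r}^0 K(u,s,X(u+s))ds-\int_{-r}^0 K(u,s,\Tilde{X}^h(u+s))ds\Big)+\Big(\int_{-r}^0 K(u,s,\Tilde{X}^h(u+s))ds-\Tilde{Y}(u)\Big).
\end{equation*}
The first bracket is bounded by $rK_2\sup_{[-r,u]}|X-\Tilde{X}^h|=rK_2Z^h(u)$, because $X=\Tilde{X}^h=\phi$ on $[-r,0]$ up to the grid interpolation of $\phi$, which costs $h^{H-\rho}$ by \eqref{eqphi}. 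The second bracket is a Riemann-sum error, bounded by $C(\omega)h^{H-\rho}$ via Hypothesis 3 and Theorem \ref{teoCotasapriori}. Hence $|g(u)|\le C(Z^h(u)+h^{H-\rho})$, and the first term of \eqref{eqcotaintegral} together with the drift give $C\int_0^t Z^h(u)u^{-\beta}du$ plus $O(h^{H-\rho})$.

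For the increment $|g(u)-g(v)|$ we use the standard mean-value argument for Lipschitz functions with Lipschitz derivative, as in \citep{Mishura2008}, based on Hypothesis 2. It gives
\begin{equation*}
|g(u)-g(v)|\le C\big(\Delta_{v,u}(X,\Tilde{X}^h)+\Delta_{v,u}(Y,\Tilde{Y})\big)+C\big(|X(u)-\Tilde{X}^h(u)|+|Y(u)-\Tilde{Y}(u)|\big)\big(|u-v|+|X(u)-X(v)|+|Y(u)-Y(v)|\big).
\end{equation*}
We bound the increments of the exact solution with Proposition \ref{cotasapriori} and Corollary \ref{cor1}, obtaining $C(\omega)(u-v)^{H-\rho}$. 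Since $H-\rho>\beta$ for small $\rho$, $\int_0^u (u-v)^{H-\rho-\beta-1}dv$ is bounded, and this cross term contributes $C\int_0^t (Z^h(u)+h^{H-\rho})du\le C\int_0^t Z^h(u)u^{-\beta}du+Ch^{H-\rho}$. The remaining terms are exactly the two double integrals with $\Delta_{v,u}(X,\Tilde{X}^h)$ and $\Delta_{v,u}(Y,\Tilde{Y})$ in the statement.

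\textbf{Main obstacle.} The delicate step is this mean-value decomposition of $g(u)-g(v)$ with a functional-type argument. It requires Hypothesis 2's Fréchet derivative bounds, applied to the pair of arguments $(x,y)$ and restricted to the dependence through $(X,Y)$. It also requires a careful restriction to $\Omega_{\varepsilon,h_0,\rho}$ so that all $C(\omega)$ are finite simultaneously. The remaining bookkeeping, namely collecting all $O(h^{H-\rho})$ and $O(h^{H-\rho-\beta})$ terms into $h^{H-\rho-\beta}$ for $h<1$, is routine.
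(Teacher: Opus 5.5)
Your decomposition is essentially the paper's. Your error part $g$ corresponds to its $I_1$ and $I_4$: a Lipschitz bound, $|Y(u)-\tilde Y(u)|\le C(\omega)(Z^h(u)+h^{H-\rho})$, and the four-point inequality of Corollary~\ref{Cor-cota-nualart}. That inequality produces the two $\Delta$-integrals plus cross terms bounded by $C(\omega)(\int_0^t Z^h(u)\,du+h^{H-\rho})$. Your single $d$ merges the paper's separate time- and space-discretization pieces ($I_5$, $I_6$, and $I_2$, $I_3$ for the drift). That part is fine.

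The step that fails is your treatment of $\int_0^t\int_0^u|d(u)-d(v)|(u-v)^{-\beta-1}dv\,du$ on the region $u-v\le h$.

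\textbf{Why the Hölder bound fails.} The function $d$ is not Hölder continuous; it jumps at every grid point. Indeed $d(t_i)=0$, while by continuity of $\tilde X^h$ and $\tilde Y$,
$d(t_i^-)=\sigma(t_i,\tilde X^h(t_i),\tilde Y(t_i))-\sigma(t_{i-1},\tilde X^h(t_{i-1}),\tilde Y(t_{i-1}))$.
This is a one-step increment of size up to $C(\omega)h^{H-\rho}$. The uniform Hölder bounds of Theorem~\ref{teoCotasapriori} and Corollary~\ref{cor1} control only the part $\sigma(u,\tilde X^h(u),\tilde Y(u))-\sigma(v,\tilde X^h(v),\tilde Y(v))$. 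They say nothing about the increment of the frozen values $\sigma(C_h(\cdot),\tilde X^h(C_h(\cdot)),\tilde Y(C_h(\cdot)))$. So for $v=t_i-\epsilon$, $u=t_i+\epsilon$, the claimed bound $|d(u)-d(v)|\le C(\omega)(u-v)^{H-\rho}$ is false.

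\textbf{How to repair it.} The conclusion survives, but only through the device the paper uses for $I_{5-2}$ and $I_{6-2}$: split $[0,u]$ at $C_h(u)$ instead of at $u-h$.
\begin{itemize}
\item For $v\in[C_h(u),u]$ you have $C_h(v)=C_h(u)$, so the frozen terms cancel. The Hölder bound then gives $C(\omega)\int_{C_h(u)}^u(u-v)^{H-\rho-\beta-1}dv\le C(\omega)h^{H-\rho-\beta}$.
\item For $v\in[0,C_h(u)]$, use $|d(u)|+|d(v)|\le C(\omega)h^{H-\rho}$ and $\int_0^{C_h(u)}(u-v)^{-\beta-1}dv\le\beta^{-1}(u-C_h(u))^{-\beta}$. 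This factor is unbounded in $u$. Only after integrating in $u$, via Remark~\ref{u-cu-beta} and \eqref{u-chu}, do you obtain $C(\omega)h^{H-\rho-\beta}$.
\end{itemize}
Your treatment of the region $u-v>h$ is correct; only the pairs straddling a grid point need this extra integration in $u$. With this repair the rest of your argument goes through and gives the stated inequality.
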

\begin{proof}

    \begin{align}
    \label{Zh-2}
    Z^h(t)&\leq \sup_{y\in[0,t]} \int_0^y |b(u,X(u),Y(u))-b(C_h(u),\Tilde{X}^h(C_h(u)),\Tilde{Y}(C_h(u)))|{d}u \nonumber\\
 &\quad + \sup_{y\in[0,t]}\left|\int_0^y (\sigma(u,X(u),Y(u))-\sigma(C_h(u),\Tilde{X}^h(C_h(u)),\Tilde{Y}(C_h(u)))){d}B^H(u)\right|\nonumber \\
 &\leq I_1 + I_2 + I_3 + I_4 + I_5 + I_6,
 \end{align}
where
 \begin{align*}
     I_1&=\int_0^t |b(u,X(u),Y(u))-b(u,\Tilde{X}^h(u),\Tilde{Y}(u))|{d}u, \\
     I_2&=\int_0^t |b(u,\Tilde{X}^h(u),\Tilde{Y}(u))-b(C_h(u),\Tilde{X}^h(u),\Tilde{Y}(u))|{d}u, \\
     I_3&=\int_0^t |b(C_h(u),\Tilde{X}^h(u),\Tilde{Y}(u))-b(C_h(u),\Tilde{X}^h(C_h(u)),\Tilde{Y}(C_h(u)))|{d}u, \\
     I_4&=\sup_{y\in[0,t]}\left|\int_0^y (\sigma(u,X(u),Y(u))-\sigma(u,\Tilde{X}^h(u),\Tilde{Y}(u))){d}B^H(u)\right|, \\
     I_5&=\sup_{y\in[0,t]} \left|\int_0^y (\sigma(u,\Tilde{X}^h(u),\Tilde{Y}(u))-\sigma(C_h(u),\Tilde{X}^h(u),\Tilde{Y}(u))){d}B^H(u)\right|, \\
     I_6&=\sup_{y\in[0,t]} \left|\int_0^y (\sigma(C_h(u),\Tilde{X}^h(u),\Tilde{Y}(u))-\sigma(C_h(u),\Tilde{X}^h(C_h(u)),\Tilde{Y}(C_h(u)))){d}B^H(u)\right|.
\end{align*}

From \eqref{eq4}, 
\begin{equation}\label{ConvI-1}
    I_1\leq L_2\int_0^t \left(|X(u)-\Tilde{X}^h(u)|+|Y(u)-\Tilde{Y}(u)|\right){d}u.
\end{equation}

The control of the term involving $Y$ follows the structural estimate provided by Lemma \ref{lemmaY}, combined with the discretization of the delay functional.

According to \eqref{eq2}, \eqref{AproxY} and Hypothesis 3 \eqref{HK1},
\begin{align}\label{DeltaYtilde}
    &|Y(u)-\Tilde{Y}(u)|=\left|\int_{-r}^0 K(u,s,X(u+s)){d}s - \sum_{i=-N}^{-1} h\cdot K(u,t_i,\Tilde{X}^h(u+t_i))\right| \nonumber \\
    &\leq \int_{-r}^0 \left(K_1|s-C_h(s)|+K_2|X(u+s)-\Tilde{X}^h(u+C_h(s))|\right){d}s  \nonumber \\
    &\leq \int_{-r}^0 \left(K_1|s-C_h(s)|+K_2(|X(u+s)-\Tilde{X}^h(u+s)|+|\Tilde{X}^h(u+s)-\Tilde{X}^h(u+C_h(s))|)\right){d}s, \nonumber 
    \end{align}
by Theorem \ref{teoCotasapriori},
\begin{align}
      &\leq \int_{-r}^0 \left(K_1|s-C_h(s)|+K_2|X(u+s)-\Tilde{X}^h(u+s)|+K_2C(\omega)|s-C_h(s)|^{H-\rho}\right){d}s  \nonumber \\
    &\leq rK_1h +rK_2C(\omega)h^{H-\rho} + \int_{-r}^0 K_2|X(u+s)-\Tilde{X}^h(u+s)|{d}s 
 \nonumber \\
   &\leq rK_1h +rK_2C(\omega)h^{H-\rho} + K_2r\sup_{w \in [-r,u]}|X(w)-\Tilde{X}^h(w)| \nonumber \\
     & =rK_1h +rK_2C(\omega)h^{H-\rho} + K_2r Z^h(u).
\end{align}
Thus, from \eqref{ConvI-1}
\begin{align}
\label{ConvI-1-1}
    I_1&\leq L_2\int_0^t \left((1+rK_2)Z^h(u)+rK_1h+rK_2C(\omega)h^{H-\rho}\right){d}u \nonumber \\
    &\leq L_2rK_1hT +rL_2K_2C(\omega)h^{H-\rho}T + L_2(1+rK_2)\int_0^t Z^h(u){d}u \nonumber \\
    &\leq Ch +C(\omega)h^{H-\rho} + C\int_0^t Z^h(u){d}u. 
\end{align} 
Now, using \eqref{eq3}, we obtain the following  
\begin{equation}\label{ConvI-2}
I_2\leq \int_0^t L_1|u-C_h(u)|{d}u \leq Ch.    
\end{equation}

By Hypothesis 1 \eqref{eq4}, and using Theorem \ref{teoCotasapriori} and Corollary \ref{cor1},
\begin{align}\label{ConvI-3}
    I_3&\leq L_2\int_0^t \left(|\Tilde{X}^h(u)-\Tilde{X}^h(C_h(u))|+|\Tilde{Y}(u)-\Tilde{Y}(C_h(u))|\right){d}u \nonumber \\
    &\leq L_2\int_0^t \left(C(\omega)|u-C_h(u)|^{H-\rho}+rK_1|u-C_h(u)| +rK_2C(\omega)|u-C_h(u)|^{H-\rho}\right){d}u \nonumber \\
    &\leq L_2\int_0^t \left(C(\omega)h^{H-\rho}+rK_1h+rK_2C(\omega)h^{H-\rho}\right){d}u \nonumber \\
    &\leq Ch + C(\omega)h^{H-\rho}.
\end{align}
Using  the estimate \eqref{eqcotaintegral}, we  get
\begin{align}\label{ConvI-4}
    & I_4 \leq \sup_{y\in[0,t]} C(\omega )\left(\int_0^y \dfrac{|\sigma(u,X(u),Y(u))-\sigma(u,\Tilde{X}^h(u),\Tilde{Y}(u))|}{u^{\beta}}{d}u \right.\nonumber \\
    &\!\!\left.+ \!\!\int_0^y\!\!\int_0^u\!\! \dfrac{|\sigma(u,X(u),Y(u))\!-\!\sigma(u, \Tilde{X}^h(u),\Tilde{Y}(u))\!-\!\sigma(v,X(v),Y(v))\!+\!\sigma(v, \Tilde{X}^h(v),\Tilde{Y}(v))|}{(u-v)^{\beta+1}}\!{d}v{d}u\!\!\right) \nonumber \\
    &\leq I_{4-1} + I_{4-2}.
\end{align}
Using \eqref{eq4} and \eqref{DeltaYtilde}, we obtain,
\begin{align}\label{ConvI-7-1}
I_{4-1}&=\sup_{y\in[0,t]} C(\omega ) \int_0^y \dfrac{|\sigma(u,X(u),Y(u))-\sigma(u,\Tilde{X}^h(u),\Tilde{Y}(u))|}{u^{\beta}}{d}u \nonumber \\
 & \leq C(\omega ) \int_0^t\dfrac{|\sigma(u,X(u),Y(u))-\sigma(u,\Tilde{X}^h(u),\Tilde{Y}(u))|}{u^{\beta}}{d}u \nonumber \\
&\leq C(\omega ) \int_0^t\dfrac{L_2r(K_1h+ K_2C(\omega)h^{H-\rho})+ L_2(1+rK_2)Z^h(u)}{u^{\beta}}{d}u \nonumber \\
&\leq C(\omega)h + C(\omega)h^{H-\rho}  + C(\omega)\int_0^t Z^h(u)u^{-\beta}{d}u.\
\end{align}

To estimate $I_{4-2}$, we use the following corollary, whose proof is given in Appendix \ref{Aux-lemmas}.

\begin{corollary}\label{Cor-cota-nualart}
Under Hypothesis 1 (\eqref{eq3} to \eqref{eq5}), if $|x_1|,|x_2|,|x_3|,|x_4|\leq C,$ for some positive constant $C$, then
\begin{align*}
&|\sigma(t_1,x_1,y_1)-\sigma(t_1,x_2,y_2)-\sigma(t_2,x_3,y_3)+\sigma(t_2,x_4,y_4)|\\
\leq  &L_2|x_1-x_2-x_3+x_4| + L_1|x_1-x_2||t_1-t_2| + L_2|x_1-x_2||x_2 - x_4| + L_2|x_1-x_2||x_1 - x_3| \\
    & + L_2|x_1-x_2||y_2 - y_4| + L_2|x_1-x_2||y_1 - y_3| + L_2|y_1-y_2-y_3+y_4| + L_1|y_1-y_2||t_1-t_2| \\
    & + L_2|y_1-y_2||x_2-x_4| + L_2|y_1-y_2||x_1 - x_3| + L_2|y_1-y_2||y_2 - y_4| + L_2|y_1-y_2||y_1 - y_3|.
\end{align*}
\end{corollary} 
Thus, 
\begin{align}\label{ConvI-8-1}
I_{4-2}&\leq \! C(\omega ) \!\!\int_0^t\!\!\int_0^u\!\! \dfrac{|\sigma(u,X(u),Y(u))\!-\!\sigma(u, \Tilde{X}^h(u),\Tilde{Y}(u))\!-\!\sigma(v,X(v),Y(v))\!+\!\sigma(v, \Tilde{X}^h(v),\Tilde{Y}(v))|}{(u-v)^{\beta+1}}{d}v{d}u\nonumber \\
&\leq C(\omega ) \sum_{k=7}^{18} I_k,
\end{align}
where
\begin{align*}
    I_7 &= \int_0^t\int_0^u |X(u)-\Tilde{X}^h(u)-X(v)+\Tilde{X}^h(v)|(u-v)^{-\beta-1} {d}v{d}u \nonumber \\
    I_{8} &=\int_0^t\int_0^u |X(u)-\Tilde{X}^h(u)||u-v|(u-v)^{-\beta-1} {d}v{d}u \nonumber \\
    I_{9} &= \int_0^t\int_0^u |X(u)-\Tilde{X}^h(u)||\Tilde{X}^h(u)-\Tilde{X}^h(v)|(u-v)^{-\beta-1} {d}v{d}u \nonumber \\
    I_{10} &=  \int_0^t\int_0^u |X(u)-\Tilde{X}^h(u)||X(u)-X(v)|(u-v)^{-\beta-1} {d}v{d}u \nonumber \\
    I_{11} &= \int_0^t\int_0^u |X(u)-\Tilde{X}^h(u)||\Tilde{Y}(u)-\Tilde{Y}(v)|(u-v)^{-\beta-1} {d}v{d}u \nonumber \\
    I_{12} &=  \int_0^t\int_0^u |X(u)-\Tilde{X}^h(u)||Y(u)-Y(v)|(u-v)^{-\beta-1} {d}v{d}u \nonumber 
    \end{align*}
    \begin{align*}
    I_{13}& =      \int_0^t\int_0^u|Y(u)-\Tilde{Y}(u)-Y(v)+\Tilde{Y}(v)| |(u-v)^{-\beta-1} {d}v{d}u \nonumber \\
    I_{14} &= \int_0^t\int_0^u|Y(u)-\Tilde{Y}(u)||u-v| |(u-v)^{-\beta-1} {d}v{d}u \nonumber \\
    I_{15} &=  \int_0^t\int_0^u|Y(u)-\Tilde{Y}(u)||\Tilde{X}^h(u)-\Tilde{X}^h(v)| |(u-v)^{-\beta-1} {d}v{d}u \nonumber \\
    I_{16} &=  \int_0^t\int_0^u|Y(u)-\Tilde{Y}(u)||X(u) - X(v)| |(u-v)^{-\beta-1} {d}v{d}u \nonumber \\ 
    I_{17}& = \int_0^t\int_0^u|Y(u)-\Tilde{Y}(u)||\Tilde{Y}(u) - \Tilde{Y}(v)| |(u-v)^{-\beta-1} {d}v{d}u \nonumber  \\
    I_{18} &= \int_0^t\int_0^u |Y(u)-\Tilde{Y}(u)||Y(u) - Y(v)||(u-v)^{-\beta-1}{d}v{d}u.
\end{align*}   
From \eqref{Delta}, 
\begin{equation}\label{ConvI-9}
I_7= \int_0^t\int_0^u \Delta_{v,u}(X,\Tilde{X}^h)(u-v)^{-\beta-1} {d}v{d}u
\end{equation}
and \eqref{DeltaY} implies
\begin{align}\label{ConvI-15}
    I_{13}&= \int_0^t\int_0^u \Delta_{v,u}(Y,\Tilde{Y})(u-v)^{-\beta-1} {d}v{d}u.
\end{align}
By \eqref{Zh},
\begin{equation}\label{ConvI-10}
I_{8}\leq \int_0^t\int_0^u Z^h(u)(u-v)^{-\beta}{d}v{d}u \leq C\int_0^t Z^h(u){d}u
\end{equation}
and by Theorem \ref{teoCotasapriori},
\begin{equation} \label{ConvI-11}  
I_{9} \leq C(\omega)\int_0^t\int_0^u Z^h(u)(u-v)^{H-\rho-\beta-1}{d}v{d}u \leq C(\omega)\int_0^t Z^h(u)u^{H-\rho-\beta}{d}u\leq C(\omega)\int_0^t Z^h(u){d}u,
\end{equation}
also, using \eqref{deltaX},
\begin{equation}\label{ConvI-12}    
I_{10} \leq  C(\omega)\int_0^t Z^h(u){d}u.
\end{equation}
From Corollary \ref{cor1} and \eqref{Zh}, 
\begin{align}\label{ConvI-13}
I_{11} &=  \int_0^t\int_0^u |X(u)-\Tilde{X}^h(u)||\Tilde{Y}(u)-\Tilde{Y}(v)|(u-v)^{-\beta-1} {d}v{d}u \nonumber  \\
&\leq \int_0^t\int_0^u Z^h(u)(rK_1|u-v|+rK_2C(\omega)|u-v|^{H-\rho})(u-v)^{-\beta-1}{d}v{d}u \nonumber \\
&\leq C\int_0^t\int_0^u Z^h(u)(u-v)^{-\beta}{d}v{d}u + C(\omega)\int_0^t\int_0^u Z^h(u)(u-v)^{H-\rho-\beta-1}{d}v{d}u \nonumber \\
&\leq C(\omega)\int_0^t Z^h(u){d}u,
\end{align}
and analogously
\begin{equation}\label{ConvI-14}
I_{12} = C(\omega) \int_0^t\int_0^u |X(u)-\Tilde{X}^h(u)||Y(u)-Y(v)|(u-v)^{-\beta-1} {d}v{d}u\leq C(\omega)\int_0^t Z^h(u){d}u.
\end{equation}
Using \eqref{DeltaYtilde}, Theorem \ref{texistenciax} and Theorem \ref{teoCotasapriori}
 \begin{align}\label{ConvI-16}
    &I_{14}+ I_{15}+ I_{16}= \nonumber\\ &\int_0^t\int_0^u|Y(u)-\Tilde{Y}(u)|\left[|u-v|+|\Tilde{X}^h(u)-\Tilde{X}^h(v)|+|X(u) - X(v)|\right](u-v)^{-\beta-1} {d}v{d}u \nonumber \\
    &\leq C(\omega)\int_0^t\int_0^u (h + h^{H-\rho} + Z^h(u))\left[|u-v|+  2(u-v)^{H-\rho}\right](u-v)^{-\beta-1} {d}v{d}u \nonumber \\
    &\leq  C(\omega)(h + h^{H-\rho} + \int_0^t Z^h(u){d}u).
\end{align}
In the same manner, from \eqref{DeltaYtilde} and Corollary \ref{cor1}
\begin{align}\label{ConvI-19} 
    I_{17}+I_{18}& = \int_0^t\int_0^u|Y(u)-\Tilde{Y}(u)|\left[|\Tilde{Y}(u) - \Tilde{Y}(v)|+ |Y(u) - Y(v)|\right] (u-v)^{-\beta-1} {d}v{d}u \nonumber \\
    & \leq C(\omega)\int_0^t\int_0^u (h + h^{H-\rho} + Z^h(u) )  |u -v|^{H-\rho-\beta-1} {d}v{d}u \nonumber \\
      & \leq C(\omega)(h + h^{H-\rho} + \int_0^t  Z^h(u) {d}u).
\end{align}

Finally, from \eqref{ConvI-8-1} and the bounds \eqref{ConvI-9} to \eqref{ConvI-19}
\begin{align}\label{ConvI-8-2}
I_{4-2} &\leq C(\omega)\left(\int_0^t \int_0^u (\Delta_{v,u}(X,\Tilde{X}^h)+  \Delta_{v,u}(Y,\Tilde{Y}))(u-v)^{-\beta-1}{d}v{d}u + \int_0^t Z^h(u){d}u + h + h^{H-\rho} \right).
\end{align}
Therefore, from \eqref{ConvI-4}, \eqref{ConvI-7-1} and \eqref{ConvI-8-2} we obtain
\begin{align}\label{ConvI4-1}
I_4 \leq& C(\omega)\left(h + h^{H-\rho} + \int_0^t Z^h(u){d}u + \int_0^t Z^h(u)u^{-\beta}{d}u \right.\nonumber \\ 
&+ \left. \int_0^t \int_0^u (\Delta_{v,u}(X,\Tilde{X}^h) + \Delta_{v,u}(Y,\Tilde{Y}))(u-v)^{-\beta-1} {d}v{d}u  \right).
\end{align}

Let us now return to $I_5$ in \eqref{Zh-2}, from \eqref{eqcotaintegral},
\begin{align*}
    I_5 &\leq C(\omega) \left(\int_0^t \left|\sigma(u,\Tilde{X}^h(u),\Tilde{Y}(u))-\sigma(C_h(u),\Tilde{X}^h(u),\Tilde{Y}(u))\right|u^{-\beta}{d}u  \right. \\
    &\quad + \int_0^t \int_0^u \left|\sigma(u,\Tilde{X}^h(u),\Tilde{Y}(u))-\sigma(C_h(u),\Tilde{X}^h(u),\Tilde{Y}(u)) \right. \\
    &\quad \quad \quad \quad -\left. \left(\sigma(v,\Tilde{X}^h(v),\Tilde{Y}(v))-\sigma(C_h(v),\Tilde{X}^h(v),\Tilde{Y}(v)) \right)\right| 
     \left.(u-v)^{-\beta-1}{d}v{d}u \right)\\
    &= I_{5-1} + I_{5-2},
\end{align*}
where,
\begin{align*}
I_{5-1}&=  \int_0^t |\sigma(u,\Tilde{X}^h(u),\Tilde{Y}(u))-\sigma(C_h(u),\Tilde{X}^h(u),\Tilde{Y}(u))|u^{-\beta}{d}u, \\
I_{5-2}&=  \int_0^t \int_0^u |\sigma(u,\Tilde{X}^h(u),\Tilde{Y}(u))-\sigma(C_h(u),\Tilde{X}^h(u),\Tilde{Y}(u))-(\sigma(v,\Tilde{X}^h(v),\Tilde{Y}(v)) \\
&\qquad \qquad \qquad \qquad \qquad \qquad + \sigma(C_h(v),\Tilde{X}^h(v),\Tilde{Y}(v)))| (u-v)^{-\beta-1}{d}v{d}u.
\end{align*}
From Hypothesis 1  and \eqref{eq3},
\begin{equation}\label{ConvI(5-1)}
    I_{5-1} \leq L_1\int_0^t |u-C_h(u)|u^{-\beta}{d}u 
    \leq L_1\int_0^t h u^{-\beta}{d}u. 
    \leq L_1h. 
\end{equation}
To study $I_{5-2}$, we split the integral over $[0,u]$ into $[0,C_h(u)]$ and $[C_h(u) , u]$. Using Hypothesis 1, \eqref{eq3}, Teorema \eqref{teoCotasapriori}, Corollary \eqref{cor1}, \eqref{u-chu} and Remark \eqref{u-cu-beta}, we  obtain 
\begin{align}
\label{ConvI(5-2)}
    I_{5-2}    \leq& \int_0^t \int_0^{C_h(u)}( |\sigma(u,\Tilde{X}^h(u),\Tilde{Y}(u))-\sigma(C_h(u),\Tilde{X}^h(u),\Tilde{Y}(u))|\nonumber\\
    &\hspace{2cm} + |\sigma(v,\Tilde{X}^h(v),\Tilde{Y}(v))-\sigma(C_h(v),\Tilde{X}^h(v),\Tilde{Y}(v))|)(u-v)^{-\beta-1}{d}v{d}u \nonumber\\
    &+\int_0^t \int_{C_h(u)}^u |\sigma(u,\Tilde{X}^h(u),\Tilde{Y}(u))-\sigma(C_h(u),\Tilde{X}^h(u),\Tilde{Y}(u))\nonumber \\
    &\qquad \qquad \qquad \qquad - (\sigma(v,\Tilde{X}^h(v),\Tilde{Y}(v))-\sigma(C_h(u),\Tilde{X}^h(v),\Tilde{Y}(v)))| (u-v)^{-\beta-1}{d}v{d}u \nonumber\\
    \leq & L_1\int_0^t \int_0^{C_h(u)} (|u-C_h(u)|+|v-C_h(v)|)(u-v)^{-\beta-1} {d}v{d}u \nonumber\\
   &\quad + \int_0^t \int_{C_h(u)}^u (|\sigma(u,\Tilde{X}^h(u),\Tilde{Y}(u))-\sigma(v,\Tilde{X}^h(v),\Tilde{Y}(v))|\nonumber\\
    &\qquad \qquad \qquad \quad +|\sigma(C_h(u),\Tilde{X}^h(u),\Tilde{Y}(u))     -\sigma(C_h(u),\Tilde{X}^h(v),\Tilde{Y}(v)))|(u-v)^{-\beta-1}{d}v{d}u \nonumber\\
    \leq& C\int_0^t \int_0^{C_h(u)} h(u-v)^{-\beta-1}{d}v{d}u \nonumber\\
    &+ C\int_0^t \int_{C_h(u)}^u \left(|u-v| + |\Tilde{X}^h(u)-\Tilde{X}^h(v)| + |\Tilde{Y}(u)- \Tilde{Y}(v)| \right)(u-v)^{-\beta-1}{d}v{d}u\nonumber\\
   \leq &C h^{1-\beta} + C(\omega)\int_0^t \int_{C_h(u)}^u \left(2|u-v| + 2|u-v|^{H-\rho} \right)(u-v)^{-\beta-1}{d}v{d}u\nonumber\\
   \leq &C(\omega)h^{1-\beta}+C(\omega)h^{H-\rho-\beta}.
    \end{align}
Thus, from \eqref{ConvI(5-1)} and \eqref{ConvI(5-2)} we obtain
\begin{equation}
    \label{ConvI-5}
    I_5\leq C(\omega)h + C(\omega)h^{1-\beta}+C(\omega)h^{H-\rho-\beta}.
\end{equation}

Similarly to $I_5$, we have for $I_6$ that
\begin{align*}
    I_6 &\leq C(\omega) (  I_{6-1} + I_{6-2}),
\end{align*}
where
\begin{align*}\label{ConvI(6-1-2)}
I_{6-1}&=      \int_0^t |\sigma(C_h(u),\Tilde{X}^h(u),\Tilde{Y}(u))-\sigma(C_h(u),\Tilde{X}^h(C_h(u)),\Tilde{Y}(C_h(u)))|u^{-\beta}{d}u, \nonumber  \\
I_{6-2}&= \int_0^t \int_0^u \left|  \sigma(C_h(u),\Tilde{X}^h(u),\Tilde{Y}(u))-\sigma(C_h(u),\Tilde{X}^h(C_h(u)),\Tilde{Y}(C_h(u))) \right. \\
&\quad - \left. \left(\sigma(C_h(v),\Tilde{X}^h(v),\Tilde{Y}(v))-\sigma(C_h(v),\Tilde{X}^h(C_h(v)),\Tilde{Y}(C_h(v))\right)\right|(u-v)^{-\beta-1}{d}v{d}u. \nonumber
\end{align*}
Using Hypothesis 1 \eqref{eq4}, Theorem \eqref{teoCotasapriori} and Corollary \ref{cor1} we have,
\begin{equation}\label{ConvI(6-1}
        I_{6-1} \leq L_2\int_0^t (|\Tilde{X}^h(u)-\Tilde{X}^h(C_h(u))|+|\Tilde{Y}(u)-\Tilde{Y}(C_h(u))|)u^{-\beta}{d}u 
    \leq  C(\omega)h^{H-\rho}.
\end{equation}
Again, splitting the integral over $[0,u]$ into $[0,C_h(u)]$ and $[C_h(u) , u]$, from Hypothesis 1 \eqref{eq4}, Theorem \ref{teoCotasapriori}, Corollary \ref{cor1} and \eqref{u-chu}, we have
\begin{align}\label{ConvI(6-2)}
    I_{6-2} &\leq\int_0^t \int_0^{C_h(u)} \left[ |\sigma(C_h(u),\Tilde{X}^h(u),\Tilde{Y}(u))-\sigma(C_h(u),\Tilde{X}^h(C_h(u)),\Tilde{Y}(C_h(u)))| \right. \nonumber \\
    &\qquad \qquad + \left. |\sigma(C_h(v),\Tilde{X}^h(v),\Tilde{Y}(v))-\sigma(C_h(v),\Tilde{X}^h(C_h(v)),\Tilde{Y}(C_h(v)))| \right](u-v)^{-\beta-1}{d}v{d}u \nonumber \\
    &\quad + \int_0^t \int_{C_h(u)}^u |\sigma(C_h(u),\Tilde{X}^h(u),\Tilde{Y}(u))- \sigma(C_h(u),\Tilde{X}^h(v),\Tilde{Y}(v))|(u-v)^{-\beta-1}{d}v{d}u \nonumber \\
       &\leq L_2\int_0^t \int_0^{C_h(u)} \left[ |\Tilde{X}^h(u)-\Tilde{X}^h(C_h(u))|+|\Tilde{Y}(u)-\Tilde{Y}(C_h(u))|) \right.\nonumber \\
    &\qquad \qquad \qquad+ \left. |\Tilde{X}^h(v)-\Tilde{X}^h(C_h(v))| + |\Tilde{Y}(v)-\Tilde{Y}(C_h(v))| \right](u-v)^{-\beta-1}{d}v{d}u \nonumber \\
     &\quad + L_2\int_0^t \int_{C_h(u)}^u \left[ |\Tilde{X}^h(u)-\Tilde{X}^h(v)|+|\Tilde{Y}(u)-\Tilde{Y}(v)|\right](u-v)^{-\beta-1}{d}v{d}u \nonumber\\
    &\leq C(\omega)\int_0^t \int_0^{C_h(u)} (h+h^{H-\rho})(u-v)^{-\beta-1}{d}v{d}u \nonumber\\
    & \quad + C(\omega)\int_0^t \int_{C_h(u)}^u \left[|u-v|+|u-v|^{H-\rho}\right](u-v)^{-\beta-1}{d}v{d}u \nonumber \\ 
    &\leq C(\omega)h^{H-\rho-\beta} + C(\omega)h^{1-\beta}.
    \end{align}
Therefore, from \eqref{ConvI(6-1} and \eqref{ConvI(6-2)}
\begin{equation}
    \label{ConvI(6-3)}
    I_6 \leq C(\omega)h^{H-\rho-\beta} + C(\omega)h^{1-\beta}.
\end{equation}
Summarizing, from \eqref{Zh-2}, \eqref{ConvI-1-1}, \eqref{ConvI-2}, \eqref{ConvI-3}, \ref{ConvI4-1}, \ref{ConvI-5}, \eqref{ConvI(6-3)}, for $\omega \in \Omega_{\varepsilon,h_0,\rho}$ (see Theorem \ref{teoCotasapriori}), and recalling that $1-H < \beta < \frac{1}{2}$ and that $\rho > 0$ is sufficiently small so that $H-\rho-\beta > 0$, 
\begin{align}
\label{Zh-3}
    Z^h(t) &\leq C(\omega) \left(h + h^{H-\rho} + h^{1-\beta} + h^{H-\rho-\beta} + \int_0^t Z^h(u){d}u + \int_0^t Z^h(u)u^{-\beta}{d}u \right. \nonumber  \\
    &\qquad \qquad  + \left. \int_0^t\int_0^u \Delta_{v,u}(X,\Tilde{X}^h)(u-v)^{-\beta-1}{d}v{d}u + \int_0^t\int_0^u \Delta_{v,u}(Y,\Tilde{Y})(u-v)^{-\beta-1}{d}v{d}u \right) \nonumber\\
    &\leq C(\omega) \left( h^{H-\rho-\beta} + \int_0^t Z^h(u)u^{-\beta}{d}u + \int_0^t\int_0^u \Delta_{v,u}(X,\Tilde{X}^h)(u-v)^{-\beta-1}{d}v{d}u\right. \nonumber\\
    & \qquad \qquad +  \left. \int_0^t\int_0^u \Delta_{v,u}(Y,\Tilde{Y})(u-v)^{-\beta-1}{d}v{d}u\right).
\end{align}
\end{proof}
We continue our analysis by studying the term in \eqref{Zh-3}, $ \displaystyle \int_0^t\int_0^u \Delta_{v,u}(Y,\Tilde{Y})(u-v)^{-\beta-1}{d}v$ in the following Lemma.

\begin{lemma}
\label{lemcotaDY}
Under the hypotheses of  Theorem \ref{teoconv}.
Let  $\Delta_{v,u}(X,\Tilde{X}^h)$ and $\Delta_{v,u}(Y,\Tilde{Y})$ given in \eqref{Delta}  and    \eqref{DeltaY}, respectively, then
\begin{align}
   \label{intDeltaY-1}
    \int_0^t\int_0^u \Delta_{v,u}(Y,\Tilde{Y})  (u-v)^{-\beta-1}dvdu \leq & C(\omega)h^{H-\rho-\beta}+ C(\omega)\int_0^t Z^h(u) du +  C(\omega)\int_0^t \kappa(u) du,
\end{align}
where 
\begin{equation}
\label{kappa}
    \kappa(u)=\int_0^{C_h(u)}\int_{-C_h(v)}^0(u-v)^{-\beta-1}\Delta_{v+s,u+s}(X,\Tilde{X}^h)dsdv.
\end{equation}
\end{lemma}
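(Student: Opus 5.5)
We start from the two representations of $Y$ and $\Tilde{Y}$ on the common domain $[-r,0]$,
\[
Y(u)-\Tilde{Y}(u)=\int_{-r}^0\Big(K(u,s,X(u+s))-K(u,C_h(s),\Tilde{X}^h(u+C_h(s)))\Big)ds .
\]
We form the double difference $Y(u)-\Tilde Y(u)-Y(v)+\Tilde Y(v)$ inside a single $ds$-integral and split the integrand with a four-point argument analogous to Corollary \ref{Cor-cota-nualart}, applied now to $K$. The Lipschitz bounds of Hypothesis 3 alone only control first differences, so the mixed term has to come from the Fréchet differentiability of $K$ in Hypothesis 3, used through the mean value theorem in the same way Hypothesis 2 enters Corollary \ref{Cor-cota-nualart}. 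With $x_1=X(u+s)$, $x_2=\Tilde X^h(u+C_h(s))$, $x_3=X(v+s)$, $x_4=\Tilde X^h(v+C_h(s))$, the integrand is bounded by three kinds of terms:
\begin{itemize}
\item a time part $K_1|u-v|\,|x_1-x_2|$-type products, plus $|s-C_h(s)|$ contributions bounded by $K_1h$;
\item a pure double difference $|x_1-x_2-x_3+x_4|$;
\item products $|x_1-x_2|\,(|x_1-x_3|+|x_2-x_4|)$.
\end{itemize}

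\textbf{Step 1 (the pure double difference).} We replace $\Tilde X^h(\cdot+C_h(s))$ by $\Tilde X^h(\cdot+s)$ and obtain
\[
|x_1-x_2-x_3+x_4|\le \Delta_{v+s,u+s}(X,\Tilde X^h)+|\Tilde X^h(u+s)-\Tilde X^h(u+C_h(s))-\Tilde X^h(v+s)+\Tilde X^h(v+C_h(s))| .
\]
By Theorem \ref{teoCotasapriori}, the last term is at most $C(\omega)\min\{h,u-v\}^{H-\rho}$.

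\textbf{Step 2 (splitting the $s$-range).} The $\Delta_{v+s,u+s}$ integral over $s\in[-r,0]$ is split according to whether $v+s\ge0$, i.e. $s\ge -C_h(v)$ up to an $O(h)$ strip, or $v+s<0$.
\begin{itemize}
\item When both $u+s$ and $v+s$ lie in $[-r,0]$, $X=\Tilde X^h=\phi$ at grid points, and the difference reduces to $\phi$ minus its grid values. This costs $C h^{H-\rho}$ through Hypothesis $\phi$.
\item The mixed region where $v+s<0\le u+s$ has $s$-length at most $u-v$. Bounding $\Delta$ there by $2Z^h(u)+Ch^{H-\rho}$, after integrating in $v$ against $(u-v)^{-\beta-1}$, gives $\int_0^tZ^h(u)\,du$ plus $h^{H-\rho}$ terms.
\item The remaining region $s\in[-C_h(v),0]$ with $v\le C_h(u)$ is exactly what defines $\kappa(u)$.
\end{itemize}
The part $v\in[C_h(u),u]$ is handled directly with the Hölder bounds of Theorems \ref{texistenciax} and \ref{teoCotasapriori}, giving $C(\omega)h^{H-\rho-\beta}$.

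\textbf{Step 3 (the product terms).} For these we use $|x_1-x_2|\le Z^h(u)+C(\omega)h^{H-\rho}$, as in \eqref{DeltaYtilde}, together with the Hölder continuity of $X$ and $\Tilde X^h$ (Theorem \ref{texistenciax}, Theorem \ref{teoCotasapriori}). Since $H-\rho-\beta>0$, the kernel $(u-v)^{H-\rho-\beta-1}$ is integrable in $v$, exactly as for $I_{11}$ and $I_{12}$. This yields $C(\omega)\int_0^t Z^h(u)\,du+C(\omega)h^{H-\rho}$.

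The approximation error from Step 1, $\min\{h,u-v\}^{H-\rho}(u-v)^{-\beta-1}$, integrates in $v$ to $C h^{H-\rho-\beta}$, by Remark \ref{u-cu-beta}-type computations. Collecting all contributions and absorbing $h$ and $h^{H-\rho}$ into $h^{H-\rho-\beta}$ gives \eqref{intDeltaY-1}.

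\textbf{Main obstacle.} The hardest step is the bookkeeping in Step 2. We must show that the only non-absorbable piece of the $\Delta_{v+s,u+s}$ integral is the region producing $\kappa(u)$, and that the boundary strips (the $O(h)$ mismatch between $-v$ and $-C_h(v)$, and the interval $v\in[C_h(u),u]$) contribute only $h^{H-\rho-\beta}$ or $\int_0^t Z^h\,du$. There I would bound $\Delta$ crudely by $Z^h(u)+Z^h(v)$ plus Hölder increments, using that each strip has length at most $h$ or $u-v$.
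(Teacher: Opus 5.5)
Your argument is correct and shares the paper's skeleton. Your four-point splitting of the $K$-difference is the paper's decomposition into $M_1,\dots,M_7$, and your Step 3 is its treatment of $M_2,M_3,M_5,M_6,M_7$. The real difference is in the pure double difference $M_4$.

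The paper keeps the grid-shifted arguments inside $\Upsilon(u,v,s)$. It splits the $(v,s)$-domain into $A_1,\dots,A_6$ (plus $A_{4-1},A_{4-2},A_{5-1},A_{5-2},A_{5-3}$) according to the signs of $u+s$, $u+C_h(s)$, $v+s$, $v+C_h(s)$. It exchanges $\tilde X^h(\cdot+C_h(s))$ for $\tilde X^h(\cdot+s)$ only inside $A_{5-3}$. This produces factors $h^{H-\rho}(u-C_h(u))^{-\beta}$ that are controlled only after integrating in $u$, via Remark \ref{u-cu-beta}, and via \eqref{eqcotaint1} when the same terms reappear in Lemma \ref{lemaDelta}.

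You make that exchange once, globally, at cost $C(\omega)\min\{h,u-v\}^{H-\rho}$. Its integral against $(u-v)^{-\beta-1}$ is already $O(h^{H-\rho-\beta})$ pointwise in $u$. Afterwards only the signs of $u+s$ and $v+s$ matter:
\begin{itemize}
\item The region where both are negative contributes exactly zero, since $X=\tilde X^h=\phi$ on $[-r,0]$. This convention together with Hypothesis $\phi$ is also what extends your Step 1 H\"older bound to negative arguments; Theorem \ref{teoCotasapriori} alone does not cover them.
\item On the mixed region, $\Delta_{v+s,u+s}\le Z^h(u)$ over an $s$-interval of length $u-v$.
\end{itemize}
Your route is therefore shorter and gives pointwise-in-$u$ bounds; the paper's finer case split buys nothing essential here.

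Three details to get right:
\begin{itemize}
\item On the $h$-wide strip $s\in[-v,-C_h(v)]$, $v\le C_h(u)$, use $\Delta_{v+s,u+s}\le C(\omega)(u-v)^{H-\rho}$, which gives $C(\omega)h$ as in the paper's $A_{5-2}$. The $Z^h$ bound would leave $hZ^h(u)(u-C_h(u))^{-\beta}$, which is not a term allowed in \eqref{intDeltaY-1}.
\item The $|s-C_h(s)|$ contributions must appear multiplied by $|u-v|$ or $|X(u+s)-X(v+s)|$, as in the paper's $M_1$ and $M_5$. A bare $K_1h$ is not integrable against $(u-v)^{-\beta-1}$.
\item Getting those products, like the mixed $x$-term, needs Lipschitz bounds on the derivatives of $K$, not mere Fr\'echet differentiability. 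Hypothesis 3 does not state them; this omission is shared with the paper.
\end{itemize}
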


 \begin{proof} 
 We combine the structural properties of the functional $Y$ from Lemma \ref{lemmaY} with the discretization errors introduced by the Euler scheme.
 
 From \eqref{DeltaY}
\begin{align*}
    \Delta_{v,u}(Y,\Tilde{Y})
    \leq & \int_{-r}^0 |K(u,s,X(u+s))-K(u, C_h(s),\Tilde{X}^h(u+C_h(s)))\\
    &\qquad -K(v,s,X(v+s)) +K(v, C_h(s),\Tilde{X}^h(v+C_h(s)))|{d}s,     
\end{align*}
by properly applying Corollary \eqref{Cor-cota-nualart} to the function $K$, we obtain
\begin{equation*}
\label{intDeltaY0}
  \int_0^u \Delta_{v,u}(Y,\Tilde{Y})  (u-v)^{-\beta-1}dv \leq C \sum_{i=1}^7  M_i(u),
\end{equation*}
where
\begin{align}\label{covM}
    M_1(u)  &= \int_0^u\int_{-r}^0 |u-v||s-C_h(s)|(u-v)^{-\beta-1} {d}s{d}v \nonumber \\
    M_2(u) &=  \int_0^u\int_{-r}^0 |u-v||X(v+s)-\Tilde{X}^h(v+C_h(s))|(u-v)^{-\beta-1} {d}s{d}v \nonumber \\
    M_3(u) &= \int_0^u\int_{-r}^0 |u-v||X(u+s)-\Tilde{X}^h(u+C_h(s))|(u-v)^{-\beta-1} {d}s{d}v \nonumber \\
    M_4(u)& = \int_0^u\int_{-r}^0|X(u+s)-X(v+s)-\Tilde{X}^h(u+C_h(s))+\Tilde{X}^h(v+C_h(s))|(u-v)^{-\beta-1} {d}s{d}v \nonumber \\
    M_5(u) &= \int_0^u\int_{-r}^0|X(u+s)-X(v+s)||s-C_h(s)| (u-v)^{-\beta-1} {d}s{d}v\nonumber \\
   M_6(u) &= \int_0^u\int_{-r}^0|X(u+s)-X(v+s)||X(v+s)-\Tilde{X}^h(v+C_h(s))| (u-v)^{-\beta-1} {d}s{d}v \nonumber \\
    M_7(u) &=  \int_0^u\int_{-r}^0|X(u+s)-X(v+s)||X(u+s)-\Tilde{X}^h(u+C_h(s))|(u-v)^{-\beta-1} {d}s{d}v.
\end{align}
Hence, 
\begin{equation}
\label{intDeltaY}
    \int_0^t\int_0^u \Delta_{v,u}(Y,\Tilde{Y})  (u-v)^{-\beta-1}dv du\leq C \sum_{i=1}^7\int_0^t  M_i(u)du.
\end{equation}
\begin{align}\label{covM1}
\text{For} \,\, M_1 \,\, \text{we have,} \,\,  M_1(u)  &\leq  hr\int_0^u (u-v)^{-\beta}{d}vdu \leq Ch, \,\, \text{then} \,\, \int_0^t M_1(u) du \leq  Ch.
   \end{align}    
From Theorem \ref{teoCotasapriori} and recalling that $X(s)=\Tilde{X}^h(s)$ for $-r\leq s\leq 0$, as both correspond to the initial condition, and following the analysis on the different subintervals as in Lemma \eqref{Cota- I(w)}, we obtain

\begin{align}\label{covM2}
    M_2(u)  \leq & \int_0^u (u-v)^{-\beta}\int_{-r}^0 \left[|X(v+s)-\Tilde{X}^h(v+s)|+ |\Tilde{X}^h(v+s)-\Tilde{X}^h(v+C_h(s))|\right] dsdv\nonumber\\
    \leq & \int_0^u (u-v)^{-\beta}\int_{-v}^0  Z^h(v+s){d}s{d}v \nonumber\\
    & + \int_0^u (u-v)^{-\beta} * \,\left\{\int_{-r}^{-v} \left|\phi(v+s)-\phi(v+C_h(s))\right|{d}s \right. \nonumber\\
    &+ \int_{-v}^{-C_h(v)} \left[\left|\Tilde{X}^h(v+s)-\Tilde{X}^h(0)|+ |\Tilde{X}^h(0)-\phi(v+C_h(s))\right|\right]{d}s \nonumber\\
    &+ \left. \int_{-C_h(v)}^0 |\Tilde{X}^h(v+s)-\Tilde{X}^h(v+C_h(s))| {d}s \right\}{d}v\nonumber\\
    \leq & C\int_0^u (u-v)^{-\beta} Z^h(v){d}v   +C(\omega) \int_0^u (u-v)^{-\beta}\left\{ \int_{-r}^{-v} (s-C_h(s))^{H-\rho}{d}s \right. \nonumber\\
    &+ \int_{-v}^{-C_h(v)} \left. [(v-C_h(v))^{H-\rho}+ (s-C_h(s))^{H-\rho}]{d}s+\int_{-C_h(v)}^0 (s-C_h(s))^{H-\rho}{d}s \right\}{d}v\nonumber\\
    \leq & C Z^h(u) \int_0^u (u-v)^{-\beta}{d}v + C(\omega)h^{H-\rho}\int_0^u (u-v)^{-\beta} dv\nonumber\\
    \leq & CZ^h(u) + C(\omega)h^{H-\rho}.
   \end{align} 
 Then    
\begin{align}\label{covM2-1}
   \int_0^t M_2(u) du &\leq   C\int_0^t Z^h(u){d}u + C(\omega)h^{H-\rho}.
   \end{align}   
   Analogously,
\begin{align}\label{covM3}
    M_3(u)  \leq  &\int_0^u (u-v)^{-\beta}\int_{-r}^0 \left[|X(u+s)-\Tilde{X}^h(u+s)|+ |\Tilde{X}^h(u+s)-\Tilde{X}^h(u+C_h(s))|\right]{d}s{d}v\nonumber\\
    \leq & C Z^h(u)+ C(\omega)h^{H-\rho},
   \end{align} 
thus,
\begin{align}\label{covM3-1}
   \int_0^t M_3(u) du &\leq   C\int_0^t Z^h(u){d}u + C(\omega)h^{H-\rho}.
   \end{align}
From Theorem \eqref{texistenciax} and since $H-\rho -\beta>0$,

\begin{align}\label{covM5}
    M_5  &\leq h\int_0^u (u-v)^{-\beta-1}\left[\int_{-r}^{-u} |\phi(u+s)-\phi(v+s)|ds\right.\nonumber\\
    &+ \left.\int_{-u}^{-v} |{X}(u+s)-X(0)|+|{X}(0)-\phi(v+s)|ds+ \int_{-v}^{0} |{X}(u+s)-X(v+s)|{d}s\right]{d}v\nonumber\\
    &\leq  C(\omega)h\int_0^u (u-v)^{-\beta-1}4(u-v)^{H-\rho}{d}v
    \leq   C(\omega)h,    
   \end{align}
therefore,   
 \begin{align}\label{covM5-1}
   \int_0^t M_5(u)du     &\leq   C(\omega)h.    
   \end{align}  
We now study $M_6$. With an analysis similar to that used for $M_2$, 
\begin{align}\label{covM6}
    &M_6  \leq \int_0^u (u-v)^{-\beta-1}\left[ \int_{-r}^{-u} |\phi(u+s)-\phi(v+s)||\phi(v+s)-\phi(v+C_h(s))|ds\right.\nonumber\\
    &+\int_{-u}^{-v} \left( |{X}(u+s)-X(0)|+|{X}(0)-\phi(v+s)|\right) |\phi(v+s)-\phi(v+C_h(s))|ds\nonumber\\
    &+ \int_{-v}^{-C_h(v)} |{X}(u+s)-X(v+s)|\left[ |X(v+s)-X(0)|+|X(0)-\phi(v+C_h(s))|\right] {d}s\nonumber\\
    &+ \left. \int_{-C_h(v)}^0  |{X}(u+s)-X(v+s)| \left[|X(v+s)-\Tilde{X}^h(v+s)|+|\Tilde{X}^h(v+s)-\Tilde{X}^h(v+C_h(s))|\right] \right]{d}v\nonumber\\
    &\leq  C(\omega)\int_0^u (u-v)^{-\beta-1}\left[\int_{-r}^{-u} (u-v)^{H-\rho}(s-C_h(s))^{H-\rho}ds+
  \int_{-u}^{-v} 2 (u-v)^{H-\rho}(s-C_h(s))^{H-\rho}\right.ds\nonumber\\
    &+ \int_{-v}^{-C_h(v)} (u-v)^{H-\rho}[(v-C_h(v))^{H-\rho}+(s-C_h(s))^{H-\rho}]{d}s\nonumber\\
    &+ \left. \int_{-C_h(v)}^0  (u-v)^{H-\rho}[Z^h(v+s)+(s-C_h(s))^{H-\rho}] \right]{d}v\nonumber\\
    &\leq C(\omega)h^{H-\rho}\int_0^u (u-v)^{H-\rho-\beta-1}dv+ C(\omega)h^{H-\rho}\int_0^u (u-v)^{H-\rho-\beta-1}Z^h(v){d}v\nonumber\\
    &\leq C(\omega)h^{H-\rho} + C(\omega)Z^h(u)
   \end{align}  
hence,  
\begin{align}\label{covM6-1}
   \int_0^t M_6(u) du &\leq C(\omega)h^{H-\rho} + C(\omega)\int_0^tZ^h(u){d}u.
   \end{align}
 Analogously to $M_6(u)$,
 \begin{align}\label{covM7}
    M_7(u)      &\leq C(\omega)h^{H-\rho} + C(\omega)Z^h(u)
     \end{align}
     and
\begin{align}\label{covM7-1}
  \int_0^t   M_7(u)du  &\leq C(\omega)h^{H-\rho} + C(\omega)\int_0^tZ^h(u){d}u.
     \end{align}
To study $\int_0^t M_4(u)du$, we introduce the following notation 
\begin{equation}
\label{upsilon}
    \Upsilon(u,v,s)=|X(u+s)-X(v+s)-\Tilde{X}^h(u+C_h(s))+\Tilde{X}^h(v+C_h(s))|.
\end{equation}
Then, by splitting the interval $[-r,0]$ into subintervals we get

\begin{align}
    \label{M4-0}
  &  M_4(u) \nonumber \\
  =&\int_0^u\int_{-r}^0\Upsilon(u,v,s)(u-v)^{-\beta-1} {d}s{d}v \nonumber \\
=&\int_0^{C_h(u)}\int_{-r}^{-u}\Upsilon(u,v,s)(u-v)^{-\beta-1} {d}s{d}v +\int_{C_h(u)}^u\int_{-r}^{-u}\Upsilon(u,v,s)(u-v)^{-\beta-1} {d}s{d}v\nonumber\\
&+\int_0^{C_h(u)}\int_{-u}^{-C_h(u)}\Upsilon(u,v,s)(u-v)^{-\beta-1} {d}s{d}v +\int_{C_h(u)}^u\int_{-u}^{-C_h(u)}\Upsilon(u,v,s)(u-v)^{-\beta-1} {d}s{d}v\nonumber\\
&+\int_0^{C_h(u)}\int_{-C_h(u)}^0\Upsilon(u,v,s)(u-v)^{-\beta-1} {d}s{d}v +\int_{C_h(u)}^u\int_{-C_h(u)}^0\Upsilon(u,v,s)(u-v)^{-\beta-1} {d}s{d}v\nonumber\\
=& \sum_{i=1}^6 A_i(u).
\end{align}

Hence,
\begin{align}
    \label{M4-1}
   \int_0^t M_4(u)du =& \sum_{i=1}^6 \int_0^t  A_i(u)du.
\end{align}
We consider each of these cases separately.\\
\noindent
{\bf{Case 1: ($A_1$ and $A_2$)}} 
We have $0\leq v\leq u$ and $-r\leq s \leq -u$. Then $u+s$, $v+s$, $u+C_h(s)$ and $v+C_h(s)$ are non positive; thus
$$\Upsilon(u,v,s)=|\phi(u+s)-\phi(v+s)-\phi(u+C_h(s))+\phi(v+C_h(s))|.$$
Using the H\"older continuity of $\phi$ we obtain
\begin{align}
    \label{A-1}
    A_1(u)\leq &\int_0^{C_h(u)}\int_{-r}^{-u}(u-v)^{-\beta-1}[|\phi(u+s)-\phi(u+C_h(s))|+|\phi(v+s)-\phi(v+C_h(s))|] {d}s{d}v\nonumber\\
    \leq &C(\omega)\int_0^{C_h(u)}\int_{-r}^{-u}(u-v)^{-\beta-1}2(s-C_h(s))^{H-\rho} {d}s{d}v\nonumber\\
    \leq &C(\omega)h^{H-\rho}\int_0^{C_h(u)}(u-v)^{-\beta-1}{d}v\nonumber\\
    \leq &C(\omega)h^{H-\rho}(u-C_h(u))^{-\beta}.
\end{align}
From \eqref{u-chu}, 
\begin{align}
    \label{A-1-2} \int_0^t A_1(u)du \leq &C(\omega)h^{H-\rho-\beta}.
\end{align}
\begin{align}
    \label{A-2}
    A_2(u)\leq &\int_{C_h(u)}^u\int_{-r}^{-u}(u-v)^{-\beta-1}[|\phi(u+s)-\phi(v+s)|+|\phi(u+C_h(s))-\phi(v+C_h(s))|] {d}s{d}v\nonumber\\
    \leq &C(\omega)\int_{C_h(u)}^u\int_{-r}^{-u}(u-v)^{-\beta-1}2(u-v)^{H-\rho} {d}s{d}v\nonumber\\
    \leq &C(\omega)\int_{C_h(u)}^u(u-v)^{H-\rho-\beta-1}{d}v\nonumber\\
    \leq &C(\omega)(u-C_h(u))^{H-\rho-\beta}\nonumber\\
    \leq &C(\omega)h^{H-\rho-\beta}, 
\end{align}
then
\begin{align}
    \label{A-2-2}
   \int_0^t A_2(u)du\leq &C(\omega)\int_0^t(u-C_h(u))^{H-\rho-\beta}du \leq C(\omega)h^{H-\rho-\beta}. 
\end{align}

\noindent
{\bf{Case 2: ($A_3$ and $A_4$)}} 
We have $-u\leq s \leq -C_h(u)$, hence $u+s\ge 0$ and  $u+C_h(s)\le 0$.  Then $|u+C_h(s)|=-u-C_h(s)\leq s-C_h(s)$.
For $A_3$, we have $0\leq v\leq C_h(u)$, then  $v+s\le 0$ and  $v+C_h(s)\le 0$. Consequently, 
$$\Upsilon(u,v,s)=|X(u+s)-\phi(v+s)-\phi(u+C_h(s))+\phi(v+C_h(s))|.$$
Theorem \ref{texistenciax} and the H\"older continuity of $\phi$ implies
\begin{align}
    \label{A-3}
    A_3(u)\leq &\int_0^{C_h(u)}\int_{-u}^{-C_h(u)}(u-v)^{-\beta-1}[|X(u+s)-X(0)|+|\phi(v+s)-\phi(v+C_h(s))|\nonumber\\
    & \hspace{5.5cm}+|X(0)-\phi(u+C_h(s))|] {d}s{d}v\nonumber\\
    \leq &C(\omega)\int_0^{C_h(u)}\int_{-u}^{-C_h(u)}(u-v)^{-\beta-1}[(u+s)^{H-\rho} + (s-C_h(s))^{H-\rho} +|u+C_h(s)|^{H-\rho}] {d}s{d}v\nonumber\\
    \leq &C(\omega)\int_0^{C_h(u)}\int_{-u}^{-C_h(u)}(u-v)^{-\beta-1}\left[(u-C_h(u))^{H-\rho} + 2(s-C_h(s))^{H-\rho} \right] {d}s{d}v\nonumber\\
    \leq &C(\omega)h^{H-\rho+1}\int_0^{C_h(u)}(u-v)^{-\beta-1}{d}v\nonumber\\
    \leq &C(\omega)h^{H-\rho+1}(u-C_h(u))^{-\beta}.
\end{align}
thus, by \eqref{u-chu}, Remark \eqref{u-cu-beta}
\begin{align}
    \label{A-3-1}
    \int_0^t A_3(u)du
    \leq &C(\omega)h^{H-\rho+1}\int_0^t(u-C_h(u))^{-\beta}du
    \leq C(\omega)h^{H-\rho+1-\beta}.
\end{align}
Now, we divide the integral in $A_4(u)$ as follows
\begin{align}
    \label{A-4}
    A_4(u)= &\int_{C_h(u)}^u\int_{-u}^{-v}(u-v)^{-\beta-1}\Upsilon(u,v,s){d}s{d}v+\int_{C_h(u)}^u\int_{-v}^{-C_h(u)}(u-v)^{-\beta-1}\Upsilon(u,v,s){d}s{d}v\nonumber\\ 
    =& A_{4-1}(u) + A_{4-2}(u).
\end{align}
For $A_{4-1}(u)$, $C_h(u)<v<u$ and $-u\leq s\leq -v$, then $v+s\le 0$ and $v+C_h(s)\le 0$, since  $C_h(s) < s$. Analogously to $A_3(u)$,
\begin{align}
    \label{A-4-1}
    A_{4-1}(u)\leq &\int_{C_h(u)}^u\int_{-u}^{-v}(u-v)^{-\beta-1}\left[|X(u+s)-X(0)|+|\phi(v+s)-\phi(v+C_h(s))| \right.\nonumber\\
    & \hspace{5.5cm} \left. +|X(0)-\phi(u+C_h(s))|\right] {d}s{d}v\nonumber\\
    \leq &C(\omega)\int_{C_h(u)}^u\int_{-u}^{-v}(u-v)^{-\beta-1}\left[(u+s)^{H-\rho} + (s-C_h(s))^{H-\rho} +|u+C_h(s)|^{H-\rho}\right] {d}s{d}v\nonumber\\
    \leq & C(\omega)\int_{C_h(u)}^u\int_{-u}^{-v}(u-v)^{-\beta-1}[(u-C_h(u))^{H-\rho} + (s-C_h(s))^{H-\rho} +(s-C_h(s))^{H-\rho}] {d}s{d}v\nonumber\\
    \leq &C(\omega)h^{H-\rho}\int_{C_h(u)}^u(u-v)^{-\beta}{d}v
    \leq C(\omega)h^{H-\rho}(u-C_h(u))^{1-\beta}\leq C(\omega)h^{H-\rho+1-\beta}.
\end{align}
For $A_{4-2}(u)$, $C^h(u)<v<u$ and $-v\leq s\leq -C_h(u)$, then $v+s\ge 0$ and $v+C^h(s)\le 0$ then
\begin{align*}
    \Upsilon(u,v,s)&=|X(u+s)-X(v+s)-\phi(u+C_h(s))+\phi(v+C_h(s))|\\
    &\leq |X(u+s)-X(v+s)|+|\phi(u+C_h(s))-\phi(v+C_h(s))|\leq  C(\omega)(u-v)^{H-\rho}.
\end{align*}
Hence,
\begin{align}
    \label{A-4-2}
    A_{4-2}(u)\leq  &C(\omega)\int_{C_h(u)}^u(u-v)^{H-\rho-\beta-1}\int_{-v}^{-C_h(u)}{d}s{d}v\nonumber\\ 
    \leq  &C(\omega)(u-C^h(u))^{H-\rho-\beta}
   \leq  C(\omega)h^{H-\rho-\beta}.
    \end{align}
Thus, from \eqref{A-4}, \eqref{A-4-1} and \eqref{A-4-2},
\begin{align}
    \label{A-4-3}
   \int_0^t A_{4}(u)du\leq &C(\omega)h^{H-\rho-\beta}.
\end{align}

\noindent
{\bf{Case 3: ($A_5$ and $A_6$)}} 
We have $0\le v\le u$ and $-C_h(u)\le s \le 0$, then $u+s\ge 0$ and $u+C_h(s)\ge 0$.
We divide the integral in $A_5(u)$ as follows,
\begin{align}
    \label{A-5}
    A_5(u)= &\int_0^{C_h(u)}\int_{-C_h(u)}^{-v}(u-v)^{-\beta-1}\Upsilon(u,v,s){d}s{d}v\nonumber\\
    &+\int_0^{C_h(u)}\int_{-v}^{-C_h(v)}(u-v)^{-\beta-1}\Upsilon(u,v,s){d}s{d}v\nonumber\\
    &+ \int_0^{C_h(u)}\int_{-C_h(v)}^0(u-v)^{-\beta-1}\Upsilon(u,v,s){d}s{d}v\nonumber\\ 
    =& A_{5-1}(u) + A_{5-2}(u) + A_{5-3}(u).
\end{align}
For $A_{5-1}(u)$,  $-C_h(u)\le s \le -v$, then $v+s\le 0 $ and $v+C_h(s)\le 0$. From Theorem \ref{teoCotasapriori}, \eqref{Zh} and the  Hölder continuity of $\phi$
\begin{align*}
    \Upsilon(u,v,s)&=|X(u+s)-\phi(v+s)-\Tilde{X}^h(u+C_h(s))+\phi(v+C_h(s))|\\
    &\leq |X(u+s)-\Tilde{X}^h(u+s)|+|\Tilde{X}^h(u+s)-\Tilde{X}^h(u+C_h(s))|+|\phi(v+s)-\phi(v+C_h(s))|\\
    &\leq Z^h(u+s)+ C(\omega)(s-C_h(s))^{H-\rho}.
\end{align*}
Therefore, by $C_h(u)-v \leq u-v$,
\begin{align}
    \label{A-5-1}
    A_{5-1}(u)\leq &\int_0^{C_h(u)}\int_{-C_h(u)}^{-v}(u-v)^{-\beta-1}[Z^h(u+s)+ C(\omega)(s-C_h(s))^{H-\rho}]{d}s{d}v\nonumber\\
    \leq &[Z^h(u)+ C(\omega)h^{H-\rho}]\int_0^{C_h(u)}(u-v)^{-\beta}{d}v\nonumber\\
    \leq &C Z^h(u)  +  C(\omega)h^{H-\rho}.
\end{align}
For $A_{5-2}(u)$, $-v\leq s \le -C_h(v)$, then $-C_h(u)\leq C_h(s)\leq -v \le s \le -C_h(v)$. Thus, $v+s\ge 0$, $v+ C_h(s)\leq 0$ and $|v+ C_h(s)|=-v - C_h(s) \leq u-v$. Hence,
\begin{align*}
    \Upsilon(u,v,s)&=|X(u+s)-X(v+s)-\Tilde{X}^h(u+C_h(s))+\phi(v+C_h(s))|\\
    &\leq |X(u+s)-X(v+s)|+|\Tilde{X}^h(u+C_h(s))-\Tilde{X}^h(0)|+|\Tilde{X}^h(0)-\phi(v+C^h(s))|\\
    & \leq C(\omega)[(u-v)^{H-\rho} + (u+C_h(s))^{H-\rho}+ |v+C_h(s)|^{H-\rho}]\\
    & \leq C(\omega)(u-v)^{H-\rho}.
\end{align*}
Then
\begin{align}
    \label{A-5-2}
    A_{5-2}(u)\leq & C(\omega)\int_0^{C_h(u)}\int_{-v}^{-C_h(v)}(u-v)^{H-\rho-\beta-1}{d}s{d}v\nonumber\\
    \leq & hC(\omega)\int_0^{C_h(u)}(u-v)^{H-\rho-\beta-1}{d}v\nonumber\\
    \leq & C(\omega)h.
\end{align}
For $A_{5-3}(u)$, $-C_h(v)\leq s \le 0$, then $v+s\ge 0 $ and $v+C_h(s)\ge 0$, by the definition of $\Upsilon$ given in \eqref{upsilon}
\begin{align*}
     \Upsilon(u,v,s)=&|X(u+s)-X(v+s)-\Tilde{X}^h(u+C_h(s))+\Tilde{X}^h(v+C_h(s))|\\
    \leq & |X(u+s)-X(v+s)-\Tilde{X}^h(u+s)+\Tilde{X}^h(v+s)|\\
    & + |\Tilde{X}^h(u+s)-\Tilde{X}^h(u+C_h(s))|+|\Tilde{X}^h(v+C_h(s))-\Tilde{X}^h(v+s)|\\
    = & \Delta_{v+s,u+s}(X,\Tilde{X}^h) + |\Tilde{X}^h(u+s)-\Tilde{X}^h(u+C_h(s))|+|\Tilde{X}^h(v+C_h(s))-\Tilde{X}^h(v+s)|.
\end{align*}
By the definition of $\kappa$  given in \eqref{kappa},   Theorem \ref{teoCotasapriori} and \eqref{deltaX}  
\begin{align}
    \label{A-5-3}
    A_{5-3}(u) 
    \leq & \kappa(u) 
    + \int_0^{C_h(u)}\int_{-C_h(v)}^0(u-v)^{-\beta-1}[|\Tilde{X}^h(u+s)-\Tilde{X}^h(u+C_h(s))|\nonumber\\
    &\hspace{6cm}+|\Tilde{X}^h(v+C_h(s))-\Tilde{X}^h(v+s)|]dsdv\nonumber\\
   \leq& \kappa(u) + C(\omega)h^{H-\rho} \int_0^{C_h(u)}\int_{-C_h(v)}^0(u-v)^{-\beta-1}dsdv\nonumber\\
   \leq& \kappa(u) + C(\omega)h^{H-\rho} (u-C_{h}(u))^{-\beta}
\end{align}
Hence, from \eqref{A-5}, \eqref{A-5-1}, \eqref{A-5-2}, and \eqref{A-5-3}, we obtain 
\begin{align}
    \label{A-5-0}
    A_{5}(u) \leq & C Z^h(u) +  C(\omega)h^{H-\rho} + \kappa(u) +  C(\omega)h^{H-\rho} (u-C^{h}(u))^{-\beta}
\end{align}
thus, by \eqref{u-chu}
\begin{align}
    \label{A-5-4}
   \int_0^t A_{5}(u) du \leq & C\int_0^t Z^h(u)du +  C(\omega)h^{H-\rho-\beta} +  \int_0^t \kappa(u)du.
\end{align}

\

For $A_{6}$, $-C_h(u)\leq s \le 0$ and $C_h(u)\leq v \le u$, then $v+s\ge 0 $ and $v+C_h(s)\ge 0$. Then,
\begin{align*}
    \Upsilon(u,v,s)&=|X(u+s)-X(v+s)-\Tilde{X}^h(u+C_h(s))+\Tilde{X}^h(v+C_h(s))|\\
    &\leq |X(u+s)-X(v+s)|+|\Tilde{X}^h(u+C_h(s))-\Tilde{X}^h(v+C_h(s))|\\
    & \leq C(\omega)(u-v)^{H-\rho}.
\end{align*}
Hence,
\begin{align}
    \label{A-6}
    A_{6}\leq & C(\omega)\int_0^t\int_{C_h(u)}^u\int_{-C_h(u)}^{0}(u-v)^{-\beta-1}(u-v)^{H-\rho}{d}s{d}vdu\nonumber\\
    \leq & C(\omega)\int_0^t\int_{C_h(u)}^u(u-v)^{H-\rho-\beta-1}{d}vdu\nonumber\\
    \leq &  C(\omega)h^{H-\rho-\beta}.
\end{align}
In summary, from \eqref{intDeltaY} and equations \eqref{covM1} -- \eqref{A-6}, we obtained the result.
\end{proof}

\vspace{1cm}

\noindent Let us now study $\int_0^u \Delta_{v,u}(X,\Tilde{X}^h)(u-v)^{-\beta-1}{d}v$ in the following lemma.

\begin{lemma}
\label{lemaDelta}
 Under the conditions of  Theorem \ref{teoconv}. Let $0\leq v <u\leq T$ and $\Delta_{v,u}(X,\Tilde{X}^h)$ be given in \eqref{Delta}, defining
\begin{equation*}\label{theta}
    \theta(u)=\int_0^u \Delta_{v,u}(X,\Tilde{X}^h)(u-v)^{-\beta-1}{d}v,
\end{equation*}
then
   \begin{equation*}\label{eq10}
\theta(u)\leq C(\omega)\left(h^{H-\rho-\beta} + \int_0^u (Z^h(s)+\theta(s)+\kappa(s))(u-s)^{-\beta}{d}s+ \int_0^u Z^h(s)(u-s)^{-2\beta}{d}s\right).    
\end{equation*}
\end{lemma}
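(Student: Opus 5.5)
We start from the integral representations of $X$ in \eqref{eq1} and of $\Tilde{X}^h$ in \eqref{EulerInterpolado}. For $0\le v<u$ this gives
\[
\Delta_{v,u}(X,\Tilde{X}^h)\le \int_v^u |b(w,X(w),Y(w))-b(C_h(w),\Tilde{X}^h(C_h(w)),\Tilde{Y}(C_h(w)))|\,dw+\Big|\int_v^u g(w)\,dB^H(w)\Big|,
\]
where $g(w)=\sigma(w,X(w),Y(w))-\sigma(C_h(w),\Tilde{X}^h(C_h(w)),\Tilde{Y}(C_h(w)))$.

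\emph{Drift part.} We insert the intermediate points $(w,\Tilde X^h(w),\Tilde Y(w))$ and $(C_h(w),\Tilde X^h(w),\Tilde Y(w))$, exactly as in the splitting $I_1,I_2,I_3$ of Lemma \ref{lemZh}. By \eqref{DeltaYtilde}, Theorem \ref{teoCotasapriori} and Corollary \ref{cor1}, the integrand is bounded by $C(\omega)(h^{H-\rho}+Z^h(w))$. Multiplying by $(u-v)^{-\beta-1}$ and applying Fubini ($\int_0^w(u-v)^{-\beta-1}dv\le C(u-w)^{-\beta}$), this part contributes $C(\omega)\big(h^{H-\rho}+\int_0^u Z^h(s)(u-s)^{-\beta}ds\big)$.

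\emph{Stochastic part.} We apply \eqref{eqcotaintegral} on $[v,u]$:
\[
\Big|\int_v^u g\,dB^H\Big|\le C(\omega)\Big(\int_v^u |g(w)|(w-v)^{-\beta}dw+\int_v^u\!\int_v^w |g(w)-g(z)|(w-z)^{-\beta-1}dz\,dw\Big).
\]
We then split $g$ into the three pieces used for $I_4,I_5,I_6$ of Lemma \ref{lemZh}. The time-discretisation piece and the $\Tilde X^h(w)-\Tilde X^h(C_h(w))$ piece are handled exactly as in \eqref{ConvI-5} and \eqref{ConvI(6-3)}: split the inner integral at $C_h(w)$, then use Remark \ref{u-cu-beta}. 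After integrating against $(u-v)^{-\beta-1}dv$, these give $C(\omega)h^{H-\rho-\beta}$. Here we need $\int_v^u(w-v)^{-\beta}(u-v)^{-\beta-1}\cdots$ to stay integrable. The substitution $\int_0^w(u-v)^{-\beta-1}(w-v)^{-\beta}dv\le C(u-w)^{-2\beta}$ is valid because $\beta<1/2$.

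\emph{Main piece.} The remaining piece is $\sigma(w,X,Y)-\sigma(w,\Tilde X^h,\Tilde Y)$. In the first (non-difference) term it is bounded by $C(\omega)(Z^h(w)+h^{H-\rho})$ via \eqref{DeltaYtilde}, which produces the term $\int_0^u Z^h(s)(u-s)^{-2\beta}ds$. In the double-integral term we use Corollary \ref{Cor-cota-nualart}, as in \eqref{ConvI-8-1}. The cross terms $I_8$–$I_{12}$, $I_{14}$–$I_{18}$ analogues are products of $Z^h(w)$ (or $Z^h+h^{H-\rho}$) with Hölder increments, so they are bounded by $C(\omega)(Z^h(w)+h^{H-\rho})\int_v^w(w-z)^{H-\rho-\beta-1}dz$. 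What remains is
\[
\int_v^u\!\int_v^w \big(\Delta_{z,w}(X,\Tilde X^h)+\Delta_{z,w}(Y,\Tilde Y)\big)(w-z)^{-\beta-1}dz\,dw .
\]
Multiplying by $(u-v)^{-\beta-1}$, integrating in $v\in[0,u]$ and exchanging the order ($v\le z\le w\le u$) gives a kernel $\int_0^z (u-v)^{-\beta-1}dv\le C(u-z)^{-\beta}\le C(u-w)^{-\beta}$. This turns the expression into $\int_0^u (u-w)^{-\beta}\big(\theta(w)+\int_0^w\Delta_{z,w}(Y,\Tilde Y)(w-z)^{-\beta-1}dz\big)dw$.

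\emph{Delay term and main obstacle.} The last step is the delay term: we need a pointwise-in-$w$ version of Lemma \ref{lemcotaDY}, namely
\[
\int_0^w \Delta_{z,w}(Y,\Tilde Y)(w-z)^{-\beta-1}dz\le C(\omega)\big(h^{H-\rho}(w-C_h(w))^{-\beta}+h^{H-\rho-\beta}+Z^h(w)+\kappa(w)\big).
\]
This follows from the bounds on $M_1,\dots,M_7$ and $A_1,\dots,A_6$ before integrating in $u$. The singular factors $(w-C_h(w))^{-\beta}$ are then absorbed after integration against $(u-w)^{-\beta}$ via Remark \ref{u-cu-beta}, giving $h^{H-\rho-\beta}$.

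I expect this step to be the main obstacle. The exchange of integration order with the coupled kernels $(u-v)^{-\beta-1}(w-z)^{-\beta-1}$ must be checked carefully. The $v$-integral near $v=z$ is fine since $z<u$, but we must keep the exponents below $1$ using $\beta<1/2$ and $H-\rho-\beta>0$. Collecting all terms then yields the claimed inequality.
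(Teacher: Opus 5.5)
Your proposal is correct and follows essentially the same route as the paper, which splits $\theta(u)$ into $J_1,\dots,J_9$ exactly along your drift/three-piece stochastic decomposition, uses \eqref{Co} for the $(u-s)^{-2\beta}$ term, applies Corollary \ref{Cor-cota-nualart} and Fubini for $J_7$, and bounds $J_{7-4}$ through the pointwise $M_i$, $A_i$ estimates of Lemma \ref{lemcotaDY}, which is where $\kappa$ enters. One small correction: absorbing the singular factor $(s-C_h(s))^{-\beta}$ against $(u-s)^{-\beta}$ (in $J_{7-4}$ and also in $J_8$) needs the weighted bound $\int_0^u (u-s)^{-\beta}(s-C_h(s))^{-\beta}ds\le Ch^{-\beta}$ of \eqref{eqcotaint1}, not Remark \ref{u-cu-beta} itself; it does follow from the Remark's computation with $2\beta$ in place of $\beta$ together with Cauchy--Schwarz, since $2\beta<1$.
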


\begin{proof}
    Let $0\leq v <u\leq T$,  by \eqref{eqcotaintegral},
\begin{align}
\label{deltaX0}
    \Delta_{v,u}(X,\Tilde{X}^h) \leq & \int_v^u |b(s,X(s),Y(s))-b(C_h(s),\Tilde{X}^h(C_h(s)),\Tilde{Y}(C_h(s)))|{d}s \nonumber\\
    &+ C(\omega) \left(\int_v^u |\sigma(s,X(s),Y(s))-\sigma(C_h(s),\Tilde{X}^h(C_h(s)),\Tilde{Y}(C_h(s)))| (s-v)^{-\beta}{d}s\right. \nonumber \\
    & + \int_v^u\int_v^s |\sigma(s,X(s),Y(s))-\sigma(C_h(s),\Tilde{X}^h(C_h(s)),\Tilde{Y}(C_h(s)))  \nonumber\\
    &\left.-(\sigma(z,X(z),Y(z))-\sigma(C_h(z),\Tilde{X}^h(C_h(z)),\Tilde{Y}(C_h(z))))|(s-z)^{-\beta-1}{d}z{d}s \right).   
\end{align}
Then 
\begin{equation}
\label{thetau1}
    \theta_u=\int_0^u \Delta_{v,u}(X,\Tilde{X}^h)  (u-v)^{-\beta-1}dv \leq \sum_{i=1}^9 J_i,
\end{equation}
where
\begin{footnotesize}
\begin{align}\label{ConvJ1-9}
    J_1(u)&= \int_0^u \int_v^u |b(s,X(s),Y(s))-b(s,\Tilde{X}^h(s),\Tilde{Y}(s))|{d}s (u-v)^{-\beta-1}{d}v, \nonumber \\
    J_2(u)&= \int_0^u\int_v^u |b(s,\Tilde{X}^h(s),\Tilde{Y}(s))-b(C_h(s),\Tilde{X}^h(s),\Tilde{Y}(s))|{d}s (u-v)^{-\beta-1}{d}v, \nonumber \\
    J_3(u)&= \int_0^u\int_v^u |b(C_h(s),\Tilde{X}^h(s),\Tilde{Y}(s))-b(C_h(s),\Tilde{X}^h(C_h(s)),\Tilde{Y}(C_h(s)))|{d}s(u-v)^{-\beta-1}{d}v, \nonumber  \\
    J_4(u) &= \int_0^u C(\omega) \int_v^u |\sigma(s,X(s),Y(s))-\sigma(s,\Tilde{X}^h(s),\Tilde{Y}(s))) (s-v)^{-\beta}{d}s(u-v)^{-\beta-1}{d}v, \nonumber \\
    J_5(u)&= \int_0^u C(\omega) \int_v^u |\sigma(s,\Tilde{X}^h(s),\Tilde{Y}(s))-\sigma(C_h(s),\Tilde{X}^h(s),\Tilde{Y}(s))) (s-v)^{-\beta}{d}s(u-v)^{-\beta-1}{d}v, \nonumber \\
    J_6(u)&= \int_0^u C(\omega) \int_v^u |\sigma(C_h(s),\Tilde{X}^h(s),\Tilde{Y}(s))-\sigma(C_h(s),\Tilde{X}^h(C_h(s)),\Tilde{Y}(C_h(s)))) (s-v)^{-\beta}{d}s(u-v)^{-\beta-1}{d}v, \nonumber \\
    J_7(u)&= \int_0^u C(\omega)\int_v^u\int_v^s |\sigma(s,X(s),Y(s))-\sigma(s,\Tilde{X}^h(s),\Tilde{Y}(s)) \nonumber  \\
    &\quad -(\sigma(z,X(z),Y(z))-\sigma(z,\Tilde{X}^h(z),\Tilde{Y}(z)))|(s-z)^{-\beta-1}{d}z{d}s(u-v)^{-\beta-1}{d}v, \nonumber \\
    J_8(u)&= \int_0^u C(\omega) \int_v^u\int_v^s |\sigma(s,\Tilde{X}^h(s),\Tilde{Y}(s))-\sigma(C_h(s),\Tilde{X}^h(s),\Tilde{Y}(s)) \nonumber \\
    &\quad -(\sigma(z,\Tilde{X}^h(z),\Tilde{Y}(z))-\sigma(C_h(z),\Tilde{X}^h(z),\Tilde{Y}(z)))|(s-z)^{-\beta-1}{d}z{d}s(u-v)^{-\beta-1}{d}v, \nonumber  \\
    J_9(u)&= \int_0^u C(\omega) \int_v^u\int_v^s |\sigma(C_h(s),\Tilde{X}^h(s),\Tilde{Y}(s))-\sigma(C_h(s),\Tilde{X}^h(C_h(s)),\Tilde{Y}(C_h(s))) \nonumber  \\
    &\quad -(\sigma(C_h(z),\Tilde{X}^h(z),\Tilde{Y}(z))-\sigma(C_h(z),\Tilde{X}^h(C_h(z)),\Tilde{Y}(C_h(z))))|(s-z)^{-\beta-1}{d}z{d}s\nonumber (u-v)^{-\beta-1}{d}v.\nonumber \\
\end{align}
 \end{footnotesize}
From Hypothesis 1 \eqref{eq4}, changing the limits of integration and  \eqref{DeltaYtilde}, we obtain for $J-1$ in \eqref{ConvJ1-9}

\begin{align}\label{jota1}
    J_1(u) &\leq L_2 \int_0^u \int_v^u (|X(s)-\Tilde{X}^h(s)| + |Y(s)-\Tilde{Y}(s)|) {d}s (u-v)^{-\beta-1}{d}v\nonumber  \\
    &\leq C(\omega) \int_0^u \int_v^u (Z^h(s) + h + h^{H-\rho}) {d}s (u-v)^{-\beta-1}{d}v  \nonumber\\
    &\leq C(\omega)\int_0^u ( Z^h(s) + h + h^{H-\rho})\int_0^s (u-v)^{-\beta-1} {d}v {d}s\nonumber \\
    &\leq  C(\omega)\int_0^u Z^h(s)(u-s)^{-\beta}  {d}s + C(\omega)(h + h^{H-\rho}).
\end{align}
and
\begin{equation}\label{jota2}
    J_2(u) \leq L_1\int_0^u \int_v^u |s-C_h(s)| {d}s (u-v)^{-\beta-1}{d}v \leq L_1 h\int_0^u (u-v)^{-\beta}{d}v \leq Ch.
\end{equation}
By Theorem \ref{teoCotasapriori} and Corollary \ref{cor1},
\begin{align}
\label{jota3}
    J_3(u) &\leq L_2\int_0^u \int_v^u (|\Tilde{X}^h(s)-\Tilde{X}^h(C_h(s))| + |\Tilde{Y}(s)-\Tilde{Y}(C_h(s))|) {d}s (u-v)^{-\beta-1}{d}v \nonumber\\
    &\leq L_2\int_0^u \int_v^u (C(\omega) h^{H-\rho} + rK_1h + C(\omega)h^{H-\rho} ){d}s (u-v)^{-\beta-1}{d}v\nonumber \\
    &\leq C(\omega)(h^{H-\rho}+h).
\end{align}
For the next calculus, recall that
\begin{equation}
\label{Co}
    \int_0^s (s-v)^{-\beta}(u-v)^{-\beta-1}{d}v\leq C_0 (u-s)^{-2\beta},
\end{equation}    
where $C_0=\int_0^{\infty} (1+y)^{-\beta-1}y^{-\beta}{d}y$.

Using Hypothesis 1 \eqref{eq4}, \eqref{DeltaYtilde}, \eqref{Co} and since $\beta <1/2$, we obtain
\begin{align}
\label{jota4}
    J_4 &\leq L_2 C(\omega)\int_0^u\int_v^u (|X(s)-\Tilde{X}^h(s)|+|Y(s)-\Tilde{Y}(s)|)(s-v)^{-\beta}{d}s(u-v)^{-\beta-1}{d}v\nonumber \\
    &\leq C(\omega)\int_0^u\int_v^u (Z^h(s)+ rK_1h + rK_2Z^h(s))(s-v)^{-\beta}{d}s(u-v)^{-\beta-1}{d}v \nonumber\\
    &\leq C(\omega)\int_0^u\int_v^u Z^h(s)(s-v)^{-\beta}{d}s(u-v)^{-\beta-1}{d}v + C(\omega)h\int_0^u\int_v^u (s-v)^{-\beta}{d}s(u-v)^{-\beta-1}{d}v\nonumber \\
    &\leq C(\omega)\int_0^uZ^h(s)\int_0^s (s-v)^{-\beta}(u-v)^{-\beta-1}{d}v{d}s + C(\omega)h\int_0^u (u-v)^{-2\beta}{d}v \nonumber\\
    &\leq C(\omega)\int_0^u Z^h(s)(u-s)^{-2\beta}{d}s + C(\omega)h,
\end{align}

We estimate $J_5$ similarly to how we obtained $J_2$, and $J_6$ similarly to how we obtained $J_3$, then
\begin{equation}
    \label{J5-1}
    J_5\leq C(\omega)h
\end{equation}
and
\begin{equation}
    \label{J6-1}
    J_6\leq C(\omega)(h^{H-\rho}+h).
\end{equation}

To study $J_7$, we apply Corollary \eqref{Cor-cota-nualart}, then
\begin{equation}
\label{J7}
    J_7 \leq C(\omega)\sum_{k=1}^{5} J_{7-k}, 
\end{equation}

where
\begin{align*}
    J_{7-1}&= \int_0^u \int_v^u\int_v^s |X(s)-\Tilde{X}^h(s)-X(z)+\Tilde{X}^h(z)| (s-z)^{-\beta-1}{d}z{d}s(u-v)^{-\beta-1}{d}v, \\
      J_{7-2}&= \int_0^u \int_v^u\int_v^s |X(s)-\Tilde{X}^h(s)||s-z| (s-z)^{-\beta-1}{d}z{d}s(u-v)^{-\beta-1}{d}v, \\ 
      J_{7-3}&=  \int_0^u \int_v^u\int_v^s  |X(s)-\Tilde{X}^h(s)|\left[|\Tilde{X}^h(s)-\Tilde{X}^h(z)|+ |X(s)-X(z)|\right.\\
      &\qquad \qquad \qquad\left. |\Tilde{Y}(s)-\Tilde{Y}(z)|+|Y(s)-Y(z)|\right]      
      (s-z)^{-\beta-1}{d}z{d}s(u-v)^{-\beta-1}{d}v,\\
      J_{7-4}&=\int_0^u \int_v^u\int_v^s |Y(s)-\Tilde{Y}(s)-Y(z)+\Tilde{Y}(z)| (s-z)^{-\beta-1}{d}z{d}s(u-v)^{-\beta-1}{d}v,\\
    J_{7-5}&=  \int_0^u \int_v^u\int_v^s  |Y(s)-\Tilde{Y}(s)|\left[|s-z|+|\Tilde{X}^h(s)-\Tilde{X}^h(z)|+ |X(s)-X(z)|\right.\\
       &\qquad \qquad \qquad\left. |\Tilde{Y}(s)-\Tilde{Y}(z)|+|Y(s)-Y(z)|\right]      
      (s-z)^{-\beta-1}{d}z{d}s(u-v)^{-\beta-1}{d}v.
\end{align*}
By \eqref{Delta} and changing the limits of integration, we write
\begin{align*}
    J_{7-1}&=\int_0^u \int_0^s \Delta_{z,s}(X,\Tilde{X})(s-z)^{-\beta-1}\int_0^z (u-v)^{-\beta-1}{d}v{d}z{d}s \\
    &\leq \int_0^u \int_0^s (u-z)^{-\beta}\Delta_{z, s}(X,\Tilde{X})(s-z)^{-\beta-1} {d}z{d}s \\
    &\leq \int_0^u (u-s)^{-\beta}\int_0^s \Delta_{z, s}(X,\Tilde{X})(s-z)^{-\beta-1} {d}z{d}s \\
    &= \int_0^u (u-s)^{-\beta} \theta_s{d}s,
    \end{align*}
and   
    \begin{align*}
    J_{7-2}&\leq \int_0^u \int_v^u Z^h(s)\int_v^s (s-z)^{-\beta}{d}z{d}s(u-v)^{-\beta-1}{d}v \\
    &\leq \int_0^u \int_v^u Z^h(s) (s-v)^{1-\beta}{d}s(u-v)^{-\beta-1}{d}v \\
    &\leq C\int_0^u Z^h(s)\int_0^s  (u-v)^{-\beta-1}{d}v{d}s\\ 
    &\leq C\int_0^u Z^h(s) (u-s)^{-\beta}{d}s.
      \end{align*}
            Due to Theorem \ref{teoCotasapriori}, Corollary \ref{cor1} and \eqref{deltaX},
    \begin{align*}
    J_{7-3}& \leq C(\omega)\int_0^u \int_v^u \int_v^s Z^h(s)\left[2(s-z)^{H-\rho}+ 2rK_1(s-z)+ 2rK_2(s-z)^{H-\rho}\right]\\
    &\hspace{4cm}\times(s-z)^{-\beta-1} {d}z{d}s(u-v)^{-\beta-1}{d}v \\
    &\leq C(\omega)\int_0^u \int_v^u Z^h(s)\int_v^s  (s-z)^{H-\rho-\beta-1}{d}z{d}s(u-v)^{-\beta-1}{d}v \\
    &\leq C(\omega)\int_0^u Z^h(s)(u-s)^{-\beta}{d}s.
      \end{align*}
Using Theorems \ref{texistenciax} and \ref{teoCotasapriori}, Corollary \ref{cor1} and \eqref{DeltaYtilde}; and calculations analogous to those done for $J_{7-3}$, we get
\begin{align*}
    J_{7-5}&\leq C(\omega)\int_0^u \int_v^u \int_v^s [h+h^{H-\rho}+Z^h(s)] (|s-z| + |s-z|^{H-\rho}) (s-z)^{-\beta-1}{d}z{d}s(u-v)^{-\beta-1}{d}v \\
   &\leq C(\omega)\int_0^u \int_v^u Z^h(s)\int_v^s (s-z)^{H-\rho-\beta-1}{d}z{d}s(u-v)^{-\beta-1}{d}v + C(\omega)h^{H-\rho} \\
   &\leq C(\omega)\int_0^u Z^h(s)(u-s)^{-\beta}{d}s + C(\omega)h^{H-\rho}.
\end{align*}

Now, we study the term $J_{7-4}$, similarly to $J_{7-1}$, by \eqref{DeltaY}
\begin{align}
\label{J74cota}
J_{7-4}&=\int_0^u \int_0^s \Delta_{z, s}(Y,\Tilde{Y})(s-z)^{-\beta-1}\int_0^z (u-v)^{-\beta-1}{d}v{d}z{d}s \nonumber\\
    &\leq \int_0^u (u-s)^{-\beta}\int_0^s \Delta_{z, s}(Y,\Tilde{Y})(s-z)^{-\beta-1} {d}z{d}s.
    \end{align}
Recalling the calculation made in Lemma \ref{lemcotaDY}, applying the inequality \eqref{intDeltaY} to $\int_0^s \Delta_{z, s}(Y,\Tilde{Y})(s-z)^{-\beta-1} {d}z$, we have
\begin{align*}
J_{7-4} &\leq \int_0^u (u-s)^{-\beta}\sum_{i=1}^7  M_i(s) ds,
    \end{align*}
where $M_i(s)$ are the corresponding integrals given in \eqref{covM}.

Using the bound found in \eqref{covM1},
\begin{align}
    \label{M1Y}
    \int_0^u (u-s)^{-\beta}  M_1(s) ds &\leq \int_0^u (u-s)^{-\beta} Chds \leq Ch.
\end{align}

From \eqref{covM2},
\begin{align}
    \label{M2Y}
    \int_0^u (u-s)^{-\beta}  M_2(s) ds &\leq \int_0^u (u-s)^{-\beta} [ C Z^h(s) + C(\omega)h^{H-\rho}] ds\nonumber\\
    &\leq C\int_0^u  Z^h(s)(u-s)^{-\beta} ds  + C(\omega)h^{H-\rho}.
\end{align}
And analogously, from \eqref{covM3}, \eqref{covM5}, \eqref{covM6} and \eqref{covM7}, we obtain the following bounds
\begin{align}
    \label{M3Y}
    \int_0^u (u-s)^{-\beta} [M_3(s)+ M_6(s)+M_7(s)] ds &\leq C\int_0^u  Z^h(s)(u-s)^{-\beta} ds  + C(\omega)h^{H-\rho}.
\end{align}
and 
\begin{align}
    \label{M5Y}
    \int_0^u (u-s)^{-\beta}  M_5(s) ds &\leq  C(\omega)h^{H-\rho}.
\end{align}
We will study now $\int_0^u (u-s)^{-\beta}  M_4(s) ds$, from \eqref{M4-0},
\begin{align}
    \label{M4Y} 
    \int_0^u (u-s)^{-\beta}  M_4(s) ds \leq \int_0^u (u-s)^{-\beta} \sum_{i=1}^6 A_i(s)ds.
\end{align}

By \eqref{A-1} and $\beta<1/2$,
\begin{align*}
    \label{A1Y} 
    \int_0^u (u-s)^{-\beta} A_1(s)ds &\le C(\omega)h^{H-\rho}\int_0^u (u-s)^{-\beta}(s-C_h(s))^{-\beta} ds.
\end{align*}
We will study the integral on the right side,
\begin{align*}
    \int_0^u (u-s)^{-\beta}(s-C_h(s))^{-\beta} \!ds=\! \int_0^{C_h(u)}\!\!\! (u-s)^{-\beta}(s-C_h(s))^{-\beta} ds\!+\!\int_{C_h(u)}^u\!\!\! (u-s)^{-\beta}(s-C_h(s))^{-\beta} ds
\end{align*}
From the proof of Lemma 2.2. in \citep{Mishura2008}, we have that
\begin{equation*}
    \int_0^{C_h(u)} (u-s)^{-\beta}(s-C_h(s))^{-\beta} ds \leq C h^{-\beta}.
\end{equation*}
Now 
\begin{align*}
    \int_{C_h(u)}^u (u-s)^{-\beta}(s-C_h(s))^{-\beta} ds& \leq  h^{-\beta}\int_{C_h(u)}^u (u-s)^{-\beta} ds\\
    &\leq C h^{-\beta}(u-C_h(u))^{1-\beta}\\
    &\leq Ch^{1-2\beta}
\end{align*}
Hence, we obtain that
\begin{align}
\label{eqcotaint1}
    \int_0^u (u-s)^{-\beta}(s-C_h(s))^{-\beta} ds \leq Ch^{-\beta}.
\end{align}
Then,
\begin{align}
    \int_0^u (u-s)^{-\beta} A_1(s)ds &\le C(\omega)h^{H-\rho-\beta}.
\end{align}
By \eqref{A-2} and \eqref{A-6},
\begin{align}
    \label{A2Y} 
    \int_0^u (u-s)^{-\beta} [A_2(s)+A_6(s)]ds &\le C(\omega)h^{H-\rho-\beta}\int_0^u (u-s)^{-\beta} ds\nonumber \\
    &\le C(\omega)h^{H-\rho-\beta}.
\end{align}
From \eqref{A-3},
\begin{align}
    \label{A3Y} 
    \int_0^u (u-s)^{-\beta} A_3(s)ds &\le C(\omega)h^{H-\rho+1}\int_0^u (u-s)^{-\beta}(s-C_h(s))^{-\beta} ds\nonumber \\
    &\le C(\omega)h^{H-\rho+1}.
\end{align}
For $A_4(s)$, we use the inequalities \eqref{A-4}, \eqref{A-4-1} and \eqref{A-4-2},
\begin{align}
    \label{A4Y} 
    \int_0^u (u-s)^{-\beta} A_4(s)ds &\le C(\omega)h^{H-\rho-\beta}\int_0^u (u-s)^{-\beta}ds\nonumber \\
    &\le C(\omega)h^{H-\rho-\beta}.
\end{align}
From \eqref{A-5} and \eqref{A-5-0}, using \eqref{eqcotaint1}
\begin{align}
    \label{A5Y} 
   \int_0^u (u-s)^{-\beta} A_5(s)ds \le& C \int_0^u Z^h(s)(u-s)^{-\beta}  ds+ C(\omega)h^{H-\rho}\int_0^u (u-s)^{-\beta}ds \nonumber \\
   &+  \int_0^u \kappa(s)(u-s)^{-\beta}  ds+ C(\omega)h^{H-\rho}\int_0^u (u-s)^{-\beta}(s-C_h(s))^{-\beta}ds \nonumber \\      
   \le & C \int_0^u Z^h(s)(u-s)^{-\beta}  ds +C(\omega)\int_0^u \kappa(s)(u-s)^{-\beta}  ds +  C(\omega)h^{H-\rho-\beta},   
\end{align}
where $\kappa(s)$ is defined in \eqref{kappa}.

Hence, from \ref{J74cota} to \ref{A5Y}, we obtain
\begin{align*}
    J_{7-4}\leq   C(\omega)h^{H-\rho-\beta} + \int_0^u Z^h(s)(u-s)^{-\beta}  ds +C(\omega)\int_0^u \kappa(s)(u-s)^{-\beta}  ds. 
\end{align*}

Finally, from \eqref{J7}   
  \begin{align}
  \label{J7-1}
J_7 \leq&  C(\omega)\left(h^{H-\rho-\beta} + \int_0^u \theta_s(u-s)^{-\beta} {d}s+ \int_0^u Z^h(s)(u-s)^{-\beta}{d}s + \int_0^u \kappa(s)(u-s)^{-\beta}ds \right).
\end{align}
  
   We now study $J_8$,
\begin{align*}
\label{J8}
    J_8(u)&\leq \int_0^u C(\omega) \int_v^u\int_0^s |\sigma(s,\Tilde{X}^h(s),\Tilde{Y}(s))-\sigma(C_h(s),\Tilde{X}^h(s),\Tilde{Y}(s)) \nonumber \\
    &\quad -(\sigma(z,\Tilde{X}^h(z),\Tilde{Y}(z))-\sigma(C_h(z),\Tilde{X}^h(z),\Tilde{Y}(z)))|(s-z)^{-\beta-1}{d}z{d}s(u-v)^{-\beta-1}{d}v \nonumber \\ 
    &\leq    C(\omega) [\int_0^u \int_v^u\int_0^{C_h(s)} \cdots dzdsdv +\int_0^u \int_v^u\int_{C_h(s)}^s\cdots dzdsdv]   \nonumber\\
    &= C(\omega) [J_{8-1}(u)+J_{8-2}(u)].
\end{align*}
By  Hypothesis 1, \eqref{eq3} - \eqref{eq5},
\begin{align*}
    J_{8-1}(u)&= \int_0^u \int_v^u\int_0^{C_h(s)} |\sigma(s,\Tilde{X}^h(s),\Tilde{Y}(s))-\sigma(C_h(s),\Tilde{X}^h(s),\Tilde{Y}(s)) \nonumber \\
    &\quad -(\sigma(z,\Tilde{X}^h(z),\Tilde{Y}(z))-\sigma(C_h(z),\Tilde{X}^h(z),\Tilde{Y}(z)))|(s-z)^{-\beta-1}{d}z{d}s(u-v)^{-\beta-1}{d}v\\ \nonumber 
    &\leq C\int_0^u\int_v^u\int_0^{C_h(s)} [|s- C_h(s)|+|z-C_h(z)|](s-z)^{-\beta-1}{d}z{d}s(u-v)^{-\beta-1}{d}v\\ \nonumber 
    & \leq  C h \int_0^u\int_v^u\int_0^{C_h(s)} (s-z)^{-\beta-1}{d}z{d}s(u-v)^{-\beta-1}{d}v \nonumber\\
    &\leq C h\int_0^u\int_v^u(s-C_h(s))^{-\beta}{d}s(u-v)^{-\beta-1}{d}v,
\end{align*}
making the change of variables and using the inequality \eqref{eqcotaint1}, 
\begin{align*}
    J_{8-1}(u)& \leq C h \int_0^u\int_0^s (u-v)^{-\beta-1}{d}v (s-C_h(s))^{-\beta}{d}s\\
    & \leq C h \int_0^u (u-s)^{-\beta} (s-C_h(s))^{-\beta}{d}s\\
    & \leq C h^{1-\beta}. 
\end{align*}
To bound $ J_{8-2}(u)$, by hypothesis 1 and  recalling that  if $ z\in[C_h(s),s]$, then $C_h(z)=C_h(s)$
\begin{align*}
    &J_{8-2}(u)= \int_0^u \int_v^u\int_{C_h(s)}^s |\sigma(s,\Tilde{X}^h(s),\Tilde{Y}(s))-\sigma(C_h(s),\Tilde{X}^h(s),\Tilde{Y}(s)) \nonumber \\
    &\quad -(\sigma(z,\Tilde{X}^h(z),\Tilde{Y}(z))-\sigma(C_h(s),\Tilde{X}^h(z),\Tilde{Y}(z)))|(s-z)^{-\beta-1}{d}z{d}s(u-v)^{-\beta-1}{d}v\nonumber \\ 
    &\leq  \int_0^u \int_v^u\int_{C_h(s)}^s (s-z)^{-\beta-1}(u-v)^{-\beta-1} [|\sigma(s,\Tilde{X}^h(s),\Tilde{Y}(s))- \sigma(s,\Tilde{X}^h(z),\Tilde{Y}(z))|\nonumber \\
    &\qquad+| \sigma(s,\Tilde{X}^h(z),\Tilde{Y}(z))- \sigma(z,\Tilde{X}^h(z),\Tilde{Y}(z))|\nonumber \\ 
    &\qquad +|\sigma(C_h(s),\Tilde{X}^h(z),\Tilde{Y}(z))  -(\sigma(z,\Tilde{X}^h(z),\Tilde{Y}(z))|]{d}z{d}s{d}v\nonumber \\ 
    &\leq C \int_0^u \int_v^u\int_{C_h(s)}^s [|s-z|+|\Tilde{X}^h(s)-\Tilde{X}^h(z)|+|\Tilde{Y}(s)-\Tilde{Y}(z)|](s-z)^{-\beta-1}(u-v)^{-\beta-1}{d}z{d}s{d}v \nonumber
    \end{align*}
    and from Theorem \ref{teoCotasapriori} and Corollary \ref{cor1}
    \begin{align*}
    J_{8-2}(u)&\leq C(\omega) \int_0^u \int_v^u\int_{C_h(s)}^s (s-z)^{H-\rho-\beta-1}(u-v)^{-\beta-1}{d}z{d}s{d}v \nonumber \\
    &\leq  C(\omega) \int_0^u \int_v^u (s-C_h(s))^{H-\rho-\beta}(u-v)^{-\beta-1}{d}s{d}v \nonumber \\
    &\leq  C(\omega)h^{H-\rho-\beta} \int_0^u (u-v)^{-\beta}{d}v \nonumber \\
      &\leq  C(\omega)h^{H-\rho-\beta}.
    \end{align*}
    Hence, we obtain that 
    \begin{equation}
        \label{J8f}
        J_8 \leq  C(\omega)h^{H-\rho-\beta}. 
    \end{equation}
Similarly to $J_8$,
 \begin{align*}
 \label{J9}
 J_9 &\leq  C(\omega) \int_0^u \int_v^u\int_0^s |\sigma(C_h(s),\Tilde{X}^h(s),\Tilde{Y}(s))-\sigma(C_h(s),\Tilde{X}^h(C_h(s)),\Tilde{Y}(C_h(s))) \nonumber  \\
    &\quad -(\sigma(C_h(z),\Tilde{X}^h(z),\Tilde{Y}(z))-\sigma(C_h(z),\Tilde{X}^h(C_h(z)),\Tilde{Y}(C_h(z))))|(s-z)^{-\beta-1}{d}z{d}s(u-v)^{-\beta-1}{d}v\nonumber \\
    &= C(\omega) [\int_0^u \int_v^u\int_0^{C_h(s)} \cdots dzdsdv + \int_0^u \int_v^u\int_{C_h(s)}^s \cdots dzdsdv\nonumber \\
     &= C(\omega) [J_{9-1}(u)+J_{9-2}(u)],
\end{align*}
where
\begin{align*}
    &J_{9-1}(u)= \int_0^u \int_v^u\int_0^{C_h(s)} |\sigma(C_h(s),\Tilde{X}^h(s),\Tilde{Y}(s))-\sigma(C_h(s),\Tilde{X}^h(C_h(s)),\Tilde{Y}(C_h(s))) \nonumber  \\
    & -(\sigma(C_h(z),\Tilde{X}^h(z),\Tilde{Y}(z))-\sigma(C_h(z),\Tilde{X}^h(C_h(z)),\Tilde{Y}(C_h(z))))|(s-z)^{-\beta-1}{d}z{d}s(u-v)^{-\beta-1}{d}v\nonumber \\
    &\leq C\int_0^u\int_v^u\int_0^{C_h(s)} [|s- C_h(s)|^{H-\rho}+|z-C_h(z)|^{H-\rho}](s-z)^{-\beta-1}{d}z{d}s(u-v)^{-\beta-1}{d}v\\ \nonumber 
        &\leq   C h^{H-\rho}\int_0^u\int_v^u\int_0^{C_h(s)} (s-z)^{-\beta-1}{d}z{d}s(u-v)^{-\beta-1}{d}v\\ \nonumber
    & \leq  C h^{H-\rho-\beta}
\end{align*}
and
\begin{align*}
    &J_{9-2}(u)= \int_0^u \int_v^u\int_{C_h(s)}^s |\sigma(C_h(s),\Tilde{X}^h(s),\Tilde{Y}(s))-\sigma(C_h(s),\Tilde{X}^h(C_h(s)),\Tilde{Y}(C_h(s))) \nonumber  \\
    &\quad -(\sigma(C_h(z),\Tilde{X}^h(z),\Tilde{Y}(z))-\sigma(C_h(z),\Tilde{X}^h(C_h(z)),\Tilde{Y}(C_h(z))))|(s-z)^{-\beta-1}{d}z{d}s(u-v)^{-\beta-1}{d}v\nonumber \\
    &= \int_0^u \int_v^u\int_{C_h(s)}^s |\sigma(C_h(s),\Tilde{X}^h(s),\Tilde{Y}(s))- \sigma(C_h(s),\Tilde{X}^h(z),\Tilde{Y}(z)| (s-z)^{-\beta-1}{d}z{d}s(u-v)^{-\beta-1}{d}v\nonumber \\
   & \leq C(\omega) \int_0^u \int_v^u\int_{C_h(s)}^s (s-z)^{H-\rho-\beta-1}(u-v)^{-\beta-1}{d}z{d}s{d}v \nonumber \\
        &\leq  C(\omega)h^{H-\rho-\beta}.
    \end{align*}
    Thus,
    \begin{equation}
        \label{J9f}
        J_9 \leq  C(\omega)h^{H-\rho-\beta}. 
    \end{equation}

In summary, by \eqref{thetau1} and bounds from \eqref{jota1} to  \eqref{jota3}, \ref{jota4} to \ref{J7}, \ref{J7-1}, \ref{J8f} and  \eqref{J9f},  we obtain that 

\begin{align*}\label{thetau}
\theta(u) \leq& C(\omega)\left( h^{H-\rho-\beta}+ \int_0^u Z^h(s)(u-s)^{-\beta}{d}s  + \int_0^u Z^h(s)(u-s)^{-2\beta}{d}s + \int_0^u \theta(s)(u-s)^{-\beta}{d}s \right.\nonumber \\ 
& + \left. \int_0^u \kappa(s)(u-s)^{-\beta}ds\right).
\end{align*}
Hence, we obtain the result.
\end{proof}

We will study now $\kappa(u)$.

\begin{lemma}
\label{lemkappa}
 Under hypotheses of  Theorem \ref{teoconv}. Let $0\leq u\leq T$ and $\kappa(u)$ defined by \eqref{kappa}, 
   \begin{equation*}\label{eqkappa}
\kappa(u)\leq C(\omega)\left(h^{H-\rho-\beta} + \int_0^u (Z^h(s)+\theta(s)+\kappa(s))(u-s)^{-\beta}{d}s+ \int_0^u Z^h(s)(u-s)^{-2\beta}{d}s\right).    
\end{equation*}
\end{lemma}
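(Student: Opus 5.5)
The plan is to reduce $\kappa(u)$ to the same kind of estimate that Lemma \ref{lemaDelta} proves for $\theta(u)$, using the time shift built into the definition. For $-C_h(v)\le s\le 0$ and $0\le v\le C_h(u)$, both $v+s$ and $u+s$ lie in $[0,T]$. We therefore represent the increment through the integral form of \eqref{eq1} and \eqref{EulerInterpolado} on $[v+s,u+s]$:
\begin{align*}
X(u+s)-\Tilde{X}^h(u+s)-X(v+s)+\Tilde{X}^h(v+s)
=&\int_{v+s}^{u+s}\big(b(\tau,X,Y)-b(C_h(\tau),\Tilde X^h(C_h\tau),\Tilde Y(C_h\tau))\big)d\tau\\
&+\int_{v+s}^{u+s}\big(\sigma(\tau,X,Y)-\sigma(C_h(\tau),\Tilde X^h(C_h\tau),\Tilde Y(C_h\tau))\big)dB^H(\tau).
\end{align*}
We apply \eqref{eqcotaintegral} on $[v+s,u+s]$, exactly as in \eqref{deltaX0}. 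This gives the bound $\Delta_{v+s,u+s}(X,\Tilde X^h)\le\sum_{i=1}^9 \tilde J_i(u,v,s)$, where $\tilde J_i$ are the integrands of $J_1,\dots,J_9$ in \eqref{ConvJ1-9} with lower limit $v+s$ and upper limit $u+s$. The random constant is the one from \eqref{eqdefc0}, taken as a supremum over subintervals of $[0,T]$ on $\Omega_{\varepsilon,h_0,\rho}$, so it is uniform in $s$.

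Next we integrate against $(u-v)^{-\beta-1}$ in $v$ and over $s\in[-C_h(v),0]$, a set of length at most $r$. For each fixed $s$, the change of variables $v'=v+s$, $u'=u+s$ leaves $u'-v'=u-v$ unchanged. So the terms $J_1$--$J_6$, $J_8$ and $J_9$ are bounded exactly as before, uniformly in $s$. This gives $C(\omega)h^{H-\rho-\beta}$ plus $\int Z^h(\tau)(u'-\tau)^{-\beta}d\tau$ and $\int Z^h(\tau)(u'-\tau)^{-2\beta}d\tau$ contributions. Since $Z^h$ is nondecreasing and $u'\le u$, these are dominated by $C\int_0^u Z^h(\tau)(u-\tau)^{-\beta}d\tau$ and $C\int_0^u Z^h(\tau)(u-\tau)^{-2\beta}d\tau$ after integrating in $s$. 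For this step we write $\tau=\tau'+s$ and use $(u-\tau')^{-\gamma}$ with $\tau'$ ranging over $[0,u]$.

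The genuinely new pieces are the analogues of $J_{7-1}$ and $J_{7-4}$. For $J_{7-1}$, Fubini as in that computation gives
\[
\int_{v'}^{u'}\!\int_{v'}^{\tau}\Delta_{z,\tau}(X,\Tilde X^h)(\tau-z)^{-\beta-1}dz\,d\tau\le \int_{v'}^{u'}\theta(\tau)\,d\tau .
\]
After integrating against $(u-v)^{-\beta-1}$ and over $s$, this leads to $\int_0^u\theta(\tau)(u-\tau)^{-\beta}d\tau$ in the shifted variable. For $J_{7-4}$ we reuse the decomposition of Lemma \ref{lemcotaDY} into $M_1,\dots,M_7$ and $A_1,\dots,A_6$, with weight $(u'-\tau)^{-\beta}$. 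These give $C(\omega)h^{H-\rho-\beta}$, the $Z^h$ terms, and $\int_0^{u}\kappa(\tau)(u-\tau)^{-\beta}d\tau$ through $A_{5-3}$. Here \eqref{eqcotaint1} controls the factors $(\tau-C_h(\tau))^{-\beta}$. Collecting all terms yields the stated inequality.

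The main obstacle is bookkeeping in the shift. The grid functions $C_h(\cdot)$ in the numerical integrand refer to the shifted time $\tau\in[v+s,u+s]$, while $\kappa$'s own definition uses $C_h(v)$ and $C_h(s)$. One must check that the domain restriction $s\ge -C_h(v)$ keeps all arguments nonnegative. This avoids the initial-segment cases of Lemma \ref{lemcotaDY}. One must also check that every bound on $[v',u']$ depends only on $u'-v'$ and on nondecreasing quantities ($Z^h$, and $\theta,\kappa$ after integration). If the Fubini step for $\theta$ fails to give the $(u-\tau)^{-\beta}$ kernel directly, we would first try bounding $\int_{v'}^{u'}\theta(\tau)d\tau\,(u-v)^{-\beta-1}$ through $\int_0^{u'}\theta(\tau)(u'-\tau)^{-\beta}d\tau$ and then $u'\le u$. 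We would also enlarge the kernel, since $(u'-\tau)^{-\beta}$ integrated against $ds$ is controlled by $(u-\tau)^{-\beta}$ up to a constant depending on $r,\beta$.
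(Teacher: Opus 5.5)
Your proposal is correct and is essentially the paper's proof. Both use the same nine-term split of $\Delta_{v+s,u+s}(X,\tilde X^h)$ obtained from \eqref{eqcotaintegral} on $[v+s,u+s]$. The $J_{7-1}$-analogue yields the $\theta$ term, and the $J_{7-4}$-analogue yields the $\kappa$ term through the $M_i$, $A_i$ decomposition of Lemma \ref{lemcotaDY} (via $A_{5-3}$). Your fixed-$s$ substitution $v'=v+s$, $u'=u+s$ is just a cleaner version of the paper's changes of integration order.

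That substitution also proves the lemma on its own, so the nine-term decomposition is unnecessary. After swapping the $s$- and $v$-integrals, for each $s\in[-C_h(u),0]$ the shifted $v'$-range lies in $[0,u+s]$ and $u-v=(u+s)-v'$. The inner integral is therefore at most $\theta(u+s)$, which gives $\kappa(u)\le\int_{u-C_h(u)}^{u}\theta(w)\,dw\le T^{\beta}\int_0^u\theta(w)(u-w)^{-\beta}\,dw$.
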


\begin{proof}
We follow the proof of Lemma \ref{lemaDelta}, calculous are similar except some technical changes. From \eqref{deltaX0} and \eqref{thetau1},
\begin{align*}
\label{deltaXch}
    \Delta_{v+s, u+s}(X, & \Tilde{X}^h) \leq  \int_{v+s}^{u+s}|b(w,X(w),Y(w))-b(C_h(w),\Tilde{X}^h(C_h(w)),\Tilde{Y}(C_h(w)))|{d}w \nonumber\\
    &+ C(\omega) \left(\int_{v+s}^{u+s} |\sigma(s,X(w),Y(w))-\sigma(C_h(w),\Tilde{X}^h(C_h(w)),\Tilde{Y}(C_h(w)))| (w-v)^{-\beta}{d}w\right. \nonumber \\
    & + \int_{v+s}^{u+s}\int_{v+s}^w |\sigma(w,X(w),Y(w))-\sigma(C_h(w),\Tilde{X}^h(C_h(w)),\Tilde{Y}(C_h(w)))  \nonumber\\
    &\left.-(\sigma(z,X(z),Y(z))-\sigma(C_h(z),\Tilde{X}^h(C_h(z)),\Tilde{Y}(C_h(z))))|(w-z)^{-\beta-1}{d}z{d}w \right).   
\end{align*}
Then 
\begin{equation}
\label{kappad1}
    \kappa(u)= \int_0^{C_h(u)}\int_{-C_h(v)}^0(u-v)^{-\beta-1} \Delta_{v+s, u+s}(X,\Tilde{X}^h)  (u-v)^{-\beta-1}dv \leq \sum_{i=1}^9 \tilde{J}_i,
\end{equation}
where
\begin{align*}
    \tilde{J}_1(u)&= \int_0^{C_h(u)}\int_{-C_h(v)}^0(u-v)^{-\beta-1}  \int_{v+s}^{u+s} |b(w,X(w),Y(w))-b(w,\Tilde{X}^h(w),\Tilde{Y}(w))|{d}w ds{d}v, \nonumber \\
    \tilde{J}_2(u)&=  \int_0^{C_h(u)}\int_{-C_h(v)}^0(u-v)^{-\beta-1}  \int_{v+s}^{u+s} |b(w,\Tilde{X}^h(w),\Tilde{Y}(w))-b(C_h(w),\Tilde{X}^h(w),\Tilde{Y}(w))|{d}w ds{d}v, \nonumber \\
    \tilde{J}_3(u)&=  \int_0^{C_h(u)}\int_{-C_h(v)}^0(u-v)^{-\beta-1}  \int_{v+s}^{u+s}|b(C_h(w),\Tilde{X}^h(w),\Tilde{Y}(w))- \nonumber \\
    & \quad -b(C_h(w),\Tilde{X}^h(C_h(w)),\Tilde{Y}(C_h(w)))|{d}wds{d}v, \nonumber  \\
    \tilde{J}_4(u) &=  \int_0^{C_h(u)}\int_{-C_h(v)}^0(u-v)^{-\beta-1}  \int_{v+s}^{u+s} |\sigma(w,X(w),Y(w))\nonumber \\
    & \quad-\sigma(w,\Tilde{X}^h(w),\Tilde{Y}(w))) (w-v)^{-\beta}{d}w ds{d}v, \nonumber \\
    \tilde{J}_5(u)&= C(\omega) \int_0^{C_h(u)}\int_{-C_h(v)}^0(u-v)^{-\beta-1}  \int_{v+s}^{u+s} |\sigma(w,\Tilde{X}^h(w),\Tilde{Y}(w)) \nonumber \\
    & \quad -\sigma(C_h(w),\Tilde{X}^h(w),\Tilde{Y}(w))) (w-v)^{-\beta}{d}w ds{d}v, \nonumber \\
    \end{align*}
    \begin{align}
    \label{jotab}
        \tilde{J}_6(u)&= C(\omega) \int_0^{C_h(u)}\int_{-C_h(v)}^0(u-v)^{-\beta-1}  \int_{v+s}^{u+s}|\sigma(C_h(w),\Tilde{X}^h(w),\Tilde{Y}(w))\nonumber \\
    & \quad-\sigma(C_h(w),\Tilde{X}^h(C_h(w)),\Tilde{Y}(C_h(w)))) (w-v)^{-\beta}{d}w ds {d}v, \nonumber \\
   \tilde{J}_7(u)&= C(\omega) \int_0^{C_h(u)}\int_{-C_h(v)}^0(u-v)^{-\beta-1}  \int_{v+s}^{u+s}\int_{v+s}^w |\sigma(w,X(w),Y(w))-\sigma(w,\Tilde{X}^h(w),\Tilde{Y}(w)) \nonumber  \\
    &\quad -(\sigma(z,X(z),Y(z))-\sigma(z,\Tilde{X}^h(z),\Tilde{Y}(z)))|(w-z)^{-\beta-1}{d}z{d}w ds{d}v, \nonumber \\
   \tilde{J}_8(u)&= C(\omega) \int_0^{C_h(u)}\int_{-C_h(v)}^0(u-v)^{-\beta-1}  \int_{v+s}^{u+s}\int_{v+s}^w |\sigma(w,\Tilde{X}^h(w),\Tilde{Y}(w))-\sigma(C_h(w),\Tilde{X}^h(w),\Tilde{Y}(w)) \nonumber \\
    &\quad -(\sigma(z,\Tilde{X}^h(z),\Tilde{Y}(z))-\sigma(C_h(z),\Tilde{X}^h(z),\Tilde{Y}(z)))|(w-z)^{-\beta-1}{d}z{d}w ds{d}v, \nonumber  \\
    \tilde{J}_9(u)&= C(\omega)  \int_0^{C_h(u)}\int_{-C_h(v)}^0(u-v)^{-\beta-1}  \int_{v+s}^{u+s}\int_{v+s}^w |\sigma(C_h(w),\Tilde{X}^h(w),\Tilde{Y}(w))\nonumber  \\
    & \quad-\sigma(C_h(w),\Tilde{X}^h(C_h(w)),\Tilde{Y}(C_h(w))) 
    -(\sigma(C_h(z),\Tilde{X}^h(z),\Tilde{Y}(z))\nonumber\\
    &\quad -\sigma(C_h(z),\Tilde{X}^h(C_h(z)),\Tilde{Y}(C_h(z))))|(w-z)^{-\beta-1}{d}z{d}w ds{d}v.
\end{align}

From Hypothesis 1 and \eqref{DeltaYtilde},  changing the limits of integration 
\begin{align}
\label{jotab1}
    \tilde{J}_1(u) &\leq C(\omega)\int_0^{u}\int_{-v}^0(u-v)^{-\beta-1}  \int_{v+s}^{u+s} |X(w)-\Tilde{X}^h(w)|+|Y(w) -\Tilde{Y}(w))|{d}w ds{d}v, \nonumber \\
    &\leq C(\omega) \int_0^{u} [Z^h(w)  + h^{H-\rho}]\int_{w-u}^0\int_{-s}^{w-s} (u-v)^{-\beta-1}  dvdsdw\notag \\
        &\leq  C(\omega)\int_0^{u}Z^h(w)dw  + C(\omega) h^{H-\rho}.
\end{align}
and
\begin{align}
\label{jotab2}
   \tilde{J}_2(u) &\leq \int_0^{C_h(u)}\int_{-C_h(v)}^0(u-v)^{-\beta-1}  \int_{v+s}^{u+s} |w-C_h(w)|{d}w ds{d}v \leq Ch.
\end{align}

\begin{align}
\label{jotab3}
    \tilde{J}_3(u) &\leq\int_0^{C_h(u)}\int_{-C_h(v)}^0(u-v)^{-\beta-1}  \int_{v+s}^{u+s} |\Tilde{X}^h(w)-\Tilde{X}^h(C_h(w))|+|\Tilde{Y}(w) -\Tilde{Y}(C_h(w))|{d}w ds{d}v, \nonumber \\
    &\leq  C(\omega)h^{H-\rho}.
\end{align}
Similarly to \eqref{jota4}, using the change of variable,  the inequality \eqref{Co}  and $\beta<1/2$, we obtain
\begin{align}
\label{jotab4}
    \tilde{J}_4(u) &\leq C(\omega) \int_0^{u}\int_{-v}^0(u-v)^{-\beta-1}  \int_{v+s}^{u+s}  [|X(w)-\Tilde{X}^h(w)|+|Y(w) -\Tilde{Y}(w))|](w-v)^{-\beta}{d}w ds{d}v \nonumber \\
    &\leq C(\omega) \int_0^{u}[Z^h(w)+h^{H-\rho}]\int_0^w\int_{\max\{-v, w-u\}}^0(u-v)^{-\beta-1}(w-v)^{-\beta}dsdvdw \nonumber \\
    &\leq   C(\omega)\int_0^{u}Z^h(w)(u-w)^{-2\beta}dw  +  C(\omega) h^{H-\rho}.
\end{align}
Analogously to $\tilde{J}_2(u)$ and $\tilde{J}_3(u)$, we obtain
\begin{equation}\label{J5-J6}
    \tilde{J}_5(u)\leq Ch  \quad {and} \quad \tilde{J}_6(u)\leq C(\omega)h^{H-\rho}.
\end{equation}

To study $\tilde{J}_7$, we apply Corollary \eqref{Cor-cota-nualart}, then
\begin{equation}
\label{Jb7}
    \tilde{J}_7 \leq C(\omega)\sum_{k=1}^{5} \tilde{J}_{7-k}, 
\end{equation}
where
\begin{align*}
    \tilde{J}_{7-1}&= \!\int_0^{C_h(u)}\!\!\!\int_{-C_h(v)}^0\!\!\!\!(u-v)^{-\beta-1}\!\!  \int_{v+s}^{u+s}\int_{v+s}^w \!\! |X(w)-\Tilde{X}^h(w)-X(z)+\Tilde{X}^h(z)| (w-z)^{-\beta-1}{d}z{d}w ds{d}v, \\
      \tilde{J}_{7-2}&= \int_0^{C_h(u)}\int_{-C_h(v)}^0(u-v)^{-\beta-1}  \int_{v+s}^{u+s}\int_{v+s}^w  |X(w)-\Tilde{X}^h(w)||w
      -z| (w-z)^{-\beta-1}{d}z{d}w ds{d}v, \\ 
      \tilde{J}_{7-3}&=  \int_0^{C_h(u)}\int_{-C_h(v)}^0(u-v)^{-\beta-1}  \int_{v+s}^{u+s}\int_{v+s}^w  |X(w)-\Tilde{X}^h(w)|\left[|\Tilde{X}^h(w)-\Tilde{X}^h(z)|\right.\\
      &\qquad \qquad \qquad\left. + |X(w)-X(z)| + |\Tilde{Y}(w)-\Tilde{Y}(z)|+|Y(w)-Y(z)|\right]     
      (w-z)^{-\beta-1}{d}z{d}w ds{d}v,\\
      \tilde{J}_{7-4}&=\!\int_0^{C_h(u)}\!\!\int_{-C_h(v)}^0\!\!\!\!(u-v)^{-\beta-1}  \!\!\int_{v+s}^{u+s}\!\int_{v+s}^w  |Y(w)-\Tilde{Y}(w)-Y(z)+\Tilde{Y}(z)| (w-z)^{-\beta-1}{d}z{d}w ds{d}v,\\
    \tilde{J}_{7-5}&=  \int_0^{C_h(u)}\int_{-C_h(v)}^0(u-v)^{-\beta-1}  \int_{v+s}^{u+s}\int_{v+s}^w  |Y(w)-\Tilde{Y}(w)|\left[|w-z|+|\Tilde{X}^h(w)-\Tilde{X}^h(z)|\right.\\
       &\qquad \qquad \qquad\left. + |X(w)-X(z)|+ |\Tilde{Y}(w)-\Tilde{Y}(z)|+|Y(w)-Y(z)|\right]    
      (w-z)^{-\beta-1}{d}z{d}w ds{d}v.
\end{align*}

By \eqref{Delta} and changing the limits of integration, we have
\begin{align*}
    \tilde{J}_{7-1}&\leq  \int_0^{u}\int_{-v}^0(u-v)^{-\beta-1}  \int_{v+s}^{u+s}\int_{v+s}^w  \Delta_{z, w}(X,\Tilde{X})(w-z)^{-\beta-1}dzdwdsdv, \\
    &=\int_0^u \int_0^w \Delta_{z,w}(X,\Tilde{X})(w-z)^{-\beta-1}\int_{w-u}^0 \int_{-s}^{z-s} (u-v)^{-\beta-1}{d}v{d}s{d}zdw \\
    &\leq C\int_0^u \int_0^w (w-z)^{-\beta-1}\Delta_{z, w}(X,\Tilde{X})\int_{w-u}^0(u-z+s)^{-\beta} ds{d}z{d}w \\
    &\leq   C\int_0^u \int_0^w \Delta_{z, w}(X,\Tilde{X})(w-z)^{-\beta-1} {d}z{d}w \\
    &= C\int_0^u  \theta(w){d}w.
    \end{align*}
   \begin{align*}
    \tilde{J}_{7-2} \leq& \int_0^{u}\int_{-v}^0(u-v)^{-\beta}  \int_{v+s}^{u+s}\int_{v+s}^w  |X(w)-\Tilde{X}^h(w)||w-z| (w-z)^{-\beta-1}{d}z{d}w ds{d}v\notag\\
    =&\int_0^u Z^h(w) \int_0^w (w-z)^{-\beta}\int_{w-u}^0 \int_{-s}^{z-s} (u-v)^{-\beta-1}{d}v{d}s{d}zdw \\
    \leq& C \int_0^u {Z}^h(w)dw.
         \end{align*}

            Due to Theorem \ref{teoCotasapriori}, Corollary \ref{cor1} and \eqref{deltaX},
    \begin{align*}
    \tilde{J}_{7-3}& \leq  C(\omega)  \int_0^{u}\int_{-v}^0(u-v)^{-\beta}  \int_{v+s}^{u+s}\int_{v+s}^w    Z^h(w) (w-z)^{H-\rho-\beta-1}{d}z{d}w ds{d}v\\
    =&C(\omega)\int_0^u Z^h(w) \int_0^w (w-z)^{H-\rho-\beta-1}\int_{w-u}^0 \int_{-s}^{z-s} (u-v)^{-\beta-1}{d}v{d}s{d}zdw \\
    \leq& C(\omega) \int_0^u {Z}^h(w)dw.
      \end{align*}
      And similarly,  from Theorems \ref{texistenciax} and \ref{teoCotasapriori}, Corollary \ref{cor1} and \eqref{DeltaYtilde}
        \begin{align*}
    \tilde{J}_{7-5}& \leq  C(\omega)  \int_0^{u}\int_{-v}^0(u-v)^{-\beta}  \int_{v+s}^{u+s}\int_{v+s}^w    [Z^h(w)+h^{H-\rho}] (w-z)^{H-\rho-\beta-1}{d}z{d}w ds{d}v\\
    =&C(\omega)\int_0^u [Z^h(w)+h^{H-\rho}] \int_0^w (w-z)^{H-\rho-\beta-1}\int_{w-u}^0 \int_{-s}^{z-s} (u-v)^{-\beta-1}{d}v{d}s{d}zdw \\
    \leq& C(\omega) \int_0^u {Z}^h(w)dw + C(\omega) h^{H-\rho}.
      \end{align*}
Now we study $\tilde{J}_{7-4}$, similarly to $\tilde{J}_{7-1}$, by \eqref{DeltaY}
\begin{align*}
    \tilde{J}_{7-4}&\leq  \int_0^{u}\int_{-v}^0(u-v)^{-\beta-1}  \int_{v+s}^{u+s}\int_{v+s}^w  \Delta_{z, w}(Y,\Tilde{Y})(w-z)^{-\beta-1}dzdwdsdv, \\
      &\leq   C\int_0^u \int_0^w \Delta_{z, w}(Y,\Tilde{Y})(w-z)^{-\beta-1} {d}z{d}w\\
      &\leq   C\int_0^u  (u-w)^{-\beta}\int_0^w \Delta_{z, w}(Y,\Tilde{Y})(w-z)^{-\beta-1} {d}z{d}w.
    \end{align*}
The last bound of  $\tilde{J}_{7-4}$ is  equal to the bound \eqref{J74cota} obtained in Lemma \ref{lemaDelta} for $J_{7-4}$ . Hence,
\begin{align}
\label{Jb74cota}
    \tilde{J}_{7-4}\leq   C(\omega)h^{H-\rho-\beta} + \int_0^u Z^h(w)(u-w)^{-\beta}  dw +C(\omega)\int_0^u \kappa(w)(u-w)^{-\beta}  dw. 
\end{align}

Thus, from \eqref{Jb7}   
  \begin{align}
  \label{Jb7-1}
\tilde{J}_7 \leq&  C(\omega)\left(h^{H-\rho-\beta} + \int_0^u \theta(w) {d}w+ \int_0^u Z^h(w)(u-w)^{-\beta}{d}w + \int_0^u \kappa(w)(u-w)^{-\beta}dw \right).
\end{align}
Now, we study $\tilde{J}_8(u)$, similar to the last calculus, making a change of variables and integrating, we obtain the following result.

\begin{align}\label{J8tilde}
       \tilde{J}_8(u)&\leq  \int_0^{u}\int_{-v}^0(u-v)^{-\beta-1}  \int_{v+s}^{u+s}\int_{v+s}^w |\sigma(w,\Tilde{X}^h(w),\Tilde{Y}(w))-\sigma(C_h(w),\Tilde{X}^h(w),\Tilde{Y}(w)) \nonumber \\
    &\quad -(\sigma(z,\Tilde{X}^h(z),\Tilde{Y}(z))-\sigma(C_h(z),\Tilde{X}^h(z),\Tilde{Y}(z)))|(w-z)^{-\beta-1}{d}z{d}w ds{d}v \nonumber  \\
    &\leq C \int_0^{u}\int_{0}^w (w-z)^{-\beta-1} |\sigma(w,\Tilde{X}^h(w),\Tilde{Y}(w))-\sigma(C_h(w),\Tilde{X}^h(w),\Tilde{Y}(w)) \nonumber \\
    &\quad -(\sigma(z,\Tilde{X}^h(z),\Tilde{Y}(z))-\sigma(C_h(z),\Tilde{X}^h(z),\Tilde{Y}(z)))|\int_{w-u}^{0}\int_{-s}^{z-s} (u-v)^{-\beta-1}dvdsdzdw \nonumber  \\
     &\leq C \int_0^{u}\int_{0}^w (w-z)^{-\beta-1} |\sigma(w,\Tilde{X}^h(w),\Tilde{Y}(w))-\sigma(C_h(w),\Tilde{X}^h(w),\Tilde{Y}(w)) \nonumber \\
    &\quad -(\sigma(z,\Tilde{X}^h(z),\Tilde{Y}(z))-\sigma(C_h(z),\Tilde{X}^h(z),\Tilde{Y}(z)))|dzdw \nonumber  \\
     &\leq C [\int_0^{u}\int_{0}^{C_h(w)}\cdots dzdw +  \int_0^{u}\int_{C_h(w)}^w\cdots dzdw]\nonumber  \\
     &=  C[ \tilde{J}_{8-1}(u)+ \tilde{J}_{8-2}(u)]. 
\end{align}
From Hypothesis 1 and \ref{u-chu},
\begin{align*}
       \tilde{J}_{8-1}(u)&= \int_0^{u}\int_0^{C_h(w)}(w-z)^{-\beta-1} |\sigma(w,\Tilde{X}^h(w),\Tilde{Y}(w))-\sigma(C_h(w),\Tilde{X}^h(w),\Tilde{Y}(w)) \nonumber \\
    &\quad -(\sigma(z,\Tilde{X}^h(z),\Tilde{Y}(z))-\sigma(C_h(z),\Tilde{X}^h(z),\Tilde{Y}(z)))|dzdw \nonumber  \\
    &\leq C \int_0^{u}\int_0^{C_h(w)}(w-z)^{-\beta-1}[ |w -C_h(w)|+|z -C_h(z)|]dzdw \nonumber  \\
    &\leq C(\omega)h\int_0^{u}\int_0^{C_h(w)}(w-z)^{-\beta-1}dzdw \nonumber  \\
      &\leq C(\omega)h\int_0^{u}(w-C_h(w))^{-\beta}
      dw \nonumber  \\
        &\leq C(\omega)h^{1-\beta}.
\end{align*}
 From Hypothesis 1 , Theorem \ref{teoCotasapriori} and Corollary \ref{cor1},
 \begin{align*}
       &\tilde{J}_{8-2}(u)=\int_0^{u}\int_{C_h(w)}^w(w-z)^{-\beta-1} |\sigma(w,\Tilde{X}^h(w),\Tilde{Y}(w))-\sigma(C_h(w),\Tilde{X}^h(w),\Tilde{Y}(w)) \nonumber \\    
       &\quad -(\sigma(z,\Tilde{X}^h(z),\Tilde{Y}(z))-\sigma(C_h(z),\Tilde{X}^h(z),\Tilde{Y}(z)))|dzdw \nonumber  \\
   &\leq   \int_0^{u}\int_{C_h(w)}^w [|\sigma(w,\Tilde{X}^h(w),\Tilde{Y}(w))-\sigma(w,\Tilde{X}^h(z),\Tilde{Y}(z))| + |\sigma(w,\Tilde{X}^h(z),\Tilde{Y}(z))-\sigma(z,\Tilde{X}^h(z),\Tilde{Y}(z))|\nonumber \\
    &\quad +|\sigma(C_h(w),\Tilde{X}^h(z),\Tilde{Y}(z))-\sigma(C_h(w),\Tilde{X}^h(w),\Tilde{Y}(w))|](w-z)^{-\beta-1}dzdw \nonumber  \\
    &\leq C(\omega) \int_0^{u}\int_{C_h(w)}^w(w-z)^{H-\rho-\beta-1}dzdw \nonumber  \\
      &\leq C(\omega)\int_0^{u}(w-C_h(w))^{H-\rho-\beta}
      dw \nonumber  \\
        &\leq C(\omega)h^{H-\rho-\beta}.
\end{align*}
Hence, 
\begin{equation}
    \label{Jb8}
    \tilde{J}_{8}(u) \leq  C(\omega)h^{H-\rho-\beta}. 
\end{equation}
Similarly, we study $\tilde{J}_{9}(u)$.
\begin{align*}
       \tilde{J}_9(u)&\leq  \int_0^{u}\int_{-v}^0(u-v)^{-\beta-1}  \int_{v+s}^{u+s}\int_{v+s}^w |\sigma(C_h(w),\Tilde{X}^h(w),\Tilde{Y}(w))\nonumber  \\
    & \quad-\sigma(C_h(w),\Tilde{X}^h(C_h(w)),\Tilde{Y}(C_h(w))) 
    -(\sigma(C_h(z),\Tilde{X}^h(z),\Tilde{Y}(z))\nonumber\\
    &\quad -\sigma(C_h(z),\Tilde{X}^h(C_h(z)),\Tilde{Y}(C_h(z))))|(w-z)^{-\beta-1}{d}z{d}w ds{d}v\nonumber\\
         &\leq C \int_0^{u}\int_{0}^w (w-z)^{-\beta-1} |\sigma(C_h(w),\Tilde{X}^h(w),\Tilde{Y}(w)) -\sigma(C_h(w),\Tilde{X}^h(C_h(w)),\Tilde{Y}(C_h(w))) \nonumber \\
    &\quad -(\sigma(C_h(z),\Tilde{X}^h(z),\Tilde{Y}(z)) -\sigma(C_h(z),\Tilde{X}^h(C_h(z)),\Tilde{Y}(C_h(z))))|{d}z{d}w\nonumber\\
     &\leq C [\int_0^{u}\int_{0}^{C_h(w)}\cdots dzdw +  \int_0^{u}\int_{C_h(w)}^w\cdots dzdw]\nonumber  \\
     &=  C[ \tilde{J}_{9-1}(u)+ \tilde{J}_{9-2}(u)],
\end{align*}
where
\begin{align*}
       \tilde{J}_{9-1}(u)&=\int_0^{u}\int_{0}^{C_h(w)} (w-z)^{-\beta-1} |\sigma(C_h(w),\Tilde{X}^h(w),\Tilde{Y}(w)) -\sigma(C_h(w),\Tilde{X}^h(C_h(w)),\Tilde{Y}(C_h(w))) \nonumber \\
    &\quad -(\sigma(C_h(z),\Tilde{X}^h(z),\Tilde{Y}(z)) -\sigma(C_h(z),\Tilde{X}^h(C_h(z)),\Tilde{Y}(C_h(z))))|{d}z{d}w\nonumber\\
     &\leq C \int_0^{u}\int_0^{C_h(w)}(w-z)^{-\beta-1}[ |w -C_h(w)|+|z -C_h(z)|]dzdw \nonumber  \\
        &\leq C h^{1-\beta}
\end{align*}
and
\begin{align*}
       \tilde{J}_{9-2}(u)&=\int_0^{u}\int_{C_h(w)}^w (w-z)^{-\beta-1} |\sigma(C_h(w),\Tilde{X}^h(w),\Tilde{Y}(w)) -\sigma(C_h(w),\Tilde{X}^h(C_h(w)),\Tilde{Y}(C_h(w))) \nonumber \\
    &\quad -(\sigma(C_h(z),\Tilde{X}^h(z),\Tilde{Y}(z)) -\sigma(C_h(z),\Tilde{X}^h(C_h(z)),\Tilde{Y}(C_h(z))))|{d}z{d}w\nonumber\\
    &=\int_0^{u}\int_{C_h(w)}^w (w-z)^{-\beta-1} |\sigma(C_h(w),\Tilde{X}^h(w),\Tilde{Y}(w)) -\sigma(C_h(w),\Tilde{X}^h(z),\Tilde{Y}(z)) |{d}z{d}w\nonumber\\
     &\leq C(\omega)\int_0^{u}\int_0^{C_h(w)}(w-z)^{H-\rho-\beta-1}dzdw \nonumber  \\
        &\leq C(\omega)h^{H-\rho-\beta}.
\end{align*}
Thus,
\begin{equation}
    \label{Jb9}
    \tilde{J}_{9}(u) \leq  C(\omega)h^{H-\rho-\beta}.
\end{equation}

Finally,  from \ref{kappad1} to \ref{Jb9} we obtain the result.
\end{proof}


\begin{proof}[Proof Theorem \ref{teoconv}]

From Lemmas \ref{lemZh}, \ref{lemcotaDY}, \ref{lemaDelta} and \ref{lemkappa}, 
\begin{align*}
&Z^h(t)+\theta(t)+\kappa(t)\\
&\leq C(\omega)\!\left(\!h^{H-\rho-\beta}\!\! +\! \int_0^t Z^h(u)\!u^{-\beta}{d}u \!+\! \int_0^t\! (Z^h(u)+\theta(u)+\kappa(u))(t-u)^{-\beta}{d}u\!+\! \int_0^t\! Z^h(u)(t-u)^{-2\beta}{d}u\right) \\
&\leq  C(\omega)\left(h^{H-\rho-\beta} + \int_0^t(Z^h(u)+\theta(u)+\kappa(u))u^{-\beta}{d}u + \int_0^t (t-u)^{-2\beta}(Z^h(u)+\theta(u)+\kappa(u)){d}u\right) \\
&\leq C(\omega)\left(h^{H-\rho-\beta}+\int_0^t(Z^h(u)+\theta(u)+\kappa(u))(u^{-\beta}+(t-u)^{-2\beta}){d}u  \right) \\
&\leq  C(\omega)\left(h^{H-\rho-\beta}+ t^{2\beta}\int_0^t (Z^h(u)+\theta(u)+\kappa(u))u^{-2\beta}(t-u)^{-2\beta}{d}u  \right).
\end{align*}
For $\omega \in \Omega_{\varepsilon,h_0,\rho}$ (see Theorem \ref{teoCotasapriori}) and recalling that  $1-H< \beta < 1/2$, $\rho>0$ is small enough to $H-\rho-\beta>0$;  and
$H-\rho>2H-1$.  Due to  Gronwall's Lemma, we have that  
\[
Z^h(t)+\theta(t)+\kappa(t) \leq C(\omega)h^{2H-1-\beta}\exp\{Ct\}.
\]
Recalling the definition of $U_h$ in \eqref{eq-U}, the previous bounds on $Z^h$, $\theta$ and $\kappa$ imply the desired estimate.
\end{proof}



\section{Numerical Simulations}
\label{numerical}

In this section, we present numerical experiments to illustrate the strong convergence behavior of the proposed Euler-type method when applied to a stochastic functional differential equation driven by fractional Brownian motion. As a benchmark example, we consider the stochastic delay equation

\begin{equation}\label{edfe_ejemplo}
dX(t)=
\left[
\int_{t-1}^{t} X(s)\,ds
+
e^{-1} X(t)
\right] dt
+
\varepsilon X(t)\, dB^H(t),
\qquad t \in [0,2],
\end{equation}
with initial condition
\[
X(t) = e^{t}, \qquad t \in [-1,0],
\]
where $B^H(t)$ denotes a fractional Brownian motion with Hurst parameter $H>1/2$. This test equation was originally considered by \citet{Buckwar2004} in the case of standard Brownian motion and is commonly used as a benchmark problem for evaluating the strong convergence properties of numerical methods for stochastic functional differential equations. Here, we extend this example to the fractional Brownian motion setting, which introduces long-range dependence and provides a more general framework.

The equation is discretized using the Euler-type scheme described in the previous section on a uniform time mesh with step size $h$. The memory integral appearing in \eqref{edfe_ejemplo} is approximated using Riemann sums. Since no explicit analytical solution is available in the stochastic case, a reference solution is computed using the same numerical method on a very fine mesh consisting of $2^{12}$ time steps. This reference solution is treated as the exact solution and is denoted by $X(T)$.

To quantify the accuracy of the numerical approximation, we compute the strong error, for each step size $h$, simulating $500$ independent trajectories and define the empirical strong error as

\begin{equation*}
\mathrm{Err}(h)
=
\frac{1}{500}
\sum_{j=1}^{500}
\max_{t_i\in\{t_0,\dots,t_N\}}\left|
\tilde{X}_N^{(j)} - X(T)
\right|,
\end{equation*}

where $\tilde{X}_N^{(j)}$ denotes the numerical approximation at time $T$ corresponding to the $j$-th trajectory. The convergence behavior is further analyzed using the ratio
\[
\mathrm{Ratio}
=
\frac{\mathrm{Err}(h/2)}{\mathrm{Err}(h)},
\]
which provides a measure of the reduction in error as the step size is refined.

The simulations are performed for step sizes $h=2^{-4}, 2^{-5}, 2^{-6}, 2^{-7}$ and for three different values of the noise intensity parameter $\varepsilon \in \{0.005, 0.1, 1\}$. The results are summarized in Tables~\ref{tab_err_H051}--\ref{tab_err_H09} and Figures~\ref{tab_fig_H0.51}--\ref{tab_fig_H0.9} for different values of the Hurst parameter.

\begin{table}[H]
\centering
\begin{tabular}{c c c c c c c}
\hline
$H=0.51$: & $\varepsilon=0.005$ & & $\varepsilon=0.1$ & & $\varepsilon=1$ & \\
\hline
$h$ & Error & Ratio & Error & Ratio & Error & Ratio \\
\hline
$2^{-4}$ & $1.681\times10^{-1}$ & -- & $1.582\times10^{-1}$ & -- & $1.388\times10^{-1}$ & --\\
$2^{-5}$ & $8.560\times10^{-2}$ & $0.509$ & $8.049\times10^{-2}$ & $0.509$ & $3.935\times10^{-2}$ & $0.284$ \\
$2^{-6}$ & $4.269\times10^{-2}$ & $0.499$ & $4.047\times10^{-2}$ & $0.503$ & $1.620\times10^{-2}$ & $0.412$ \\
$2^{-7}$ & $2.080\times10^{-2}$ & $0.487$ & $2.011\times10^{-2}$ & $0.497$ & $2.090\times10^{-2}$ & $1.290$ \\
\hline
\end{tabular}
\caption{Strong error and ratio for $H=0.51$ and different values of $\varepsilon$.}
\label{tab_err_H051}
\end{table}

\begin{table}[H]
\centering
\begin{tabular}{c c c c c c c}
\hline
$H=0.75$: & $\varepsilon=0.005$ & & $\varepsilon=0.1$ & & $\varepsilon=1$ & \\
\hline
$h$ & Error & Ratio & Error & Ratio & Error & Ratio \\
\hline
$2^{-4}$ & $1.669\times10^{-1}$ & -- & $1.331\times10^{-1}$ & -- & $7.626\times10^{-1}$ & --\\
$2^{-5}$ & $8.497\times10^{-2}$ & $0.509$ & $6.815\times10^{-2}$ & $0.512$ & $5.132\times10^{-1}$ & $0.673$ \\
$2^{-6}$ & $4.238\times10^{-2}$ & $0.499$ & $3.434\times10^{-2}$ & $0.504$ & $3.251\times10^{-1}$ & $0.634$ \\
$2^{-7}$ & $2.064\times10^{-2}$ & $0.487$ & $1.695\times10^{-2}$ & $0.493$ & $2.165\times10^{-1}$ & $0.666$ \\
\hline
\end{tabular}
\caption{Strong error and ratio for $H=0.75$ and different values of $\varepsilon$.}
\label{tab_err_H075}
\end{table}

\begin{table}[H]
\centering
\begin{tabular}{c c c c c c c}
\hline
$H=0.9$: & $\varepsilon=0.005$ & & $\varepsilon=0.1$ & & $\varepsilon=1$ & \\
\hline
$h$ & Error & Ratio & Error & Ratio & Error & Ratio \\
\hline
$2^{-4}$ & $1.663\times10^{-1}$ & -- & $1.778\times10^{-1}$ & -- & $4.953\times10^{-1}$ & --\\
$2^{-5}$ & $8.464\times10^{-2}$ & $0.509$ & $9.097\times10^{-2}$ & $0.512$ & $2.938\times10^{-1}$ & $0.593$ \\
$2^{-6}$ & $5.482\times10^{-2}$ & $0.500$ & $4.561\times10^{-2}$ & $0.501$ & $1.704\times10^{-1}$ & $0.580$ \\
$2^{-7}$ & $2.056\times10^{-2}$ & $0.487$ & $2.230\times10^{-2}$ & $0.489$ & $9.333\times10^{-2}$ & $0.548$ \\
\hline
\end{tabular}
\caption{Strong error and ratio for $H=0.9$ and different values of $\varepsilon$.}
\label{tab_err_H09}
\end{table}

The following pictures show Sample paths of the numerical solution obtained with the Euler-type scheme and the reference solution for the stochastic functional differential equation \eqref{eq1}--\eqref{eq2} with different Hurst parameter and different values of $\varepsilon$. The reference solution is computed using a sufficiently fine time discretization.

\begin{figure}[H]
\centering
\begin{subfigure}[b]{0.32\linewidth}
\includegraphics[width=\linewidth]{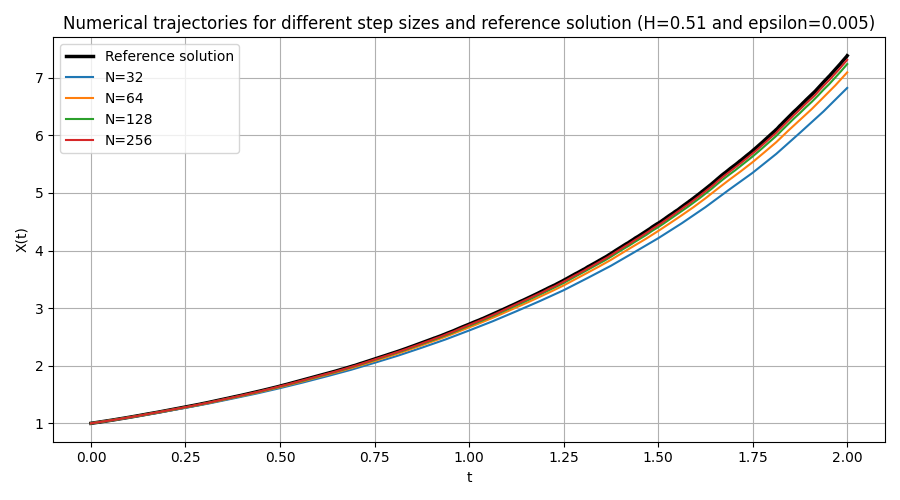}
\caption{}
\label{H_0.51,epsilon_0.005}
\end{subfigure}
\begin{subfigure}[b]{0.32\linewidth}
\includegraphics[width=\linewidth]{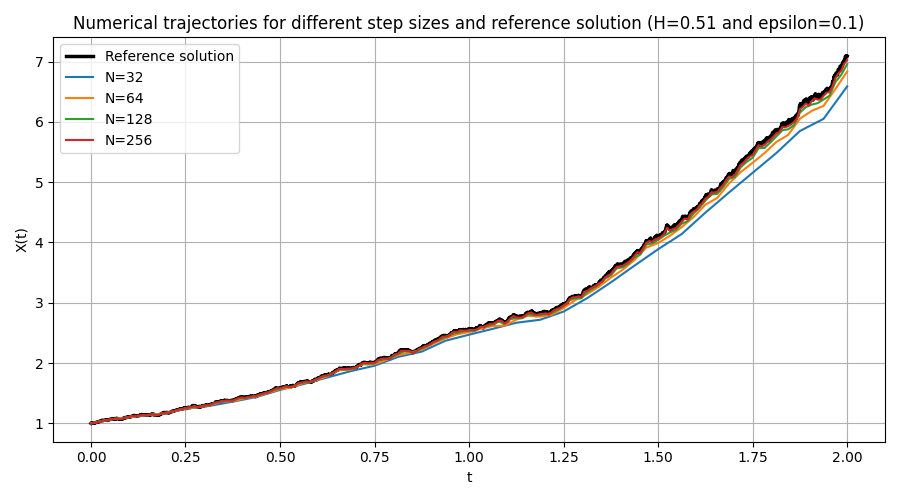}
\caption{}
\label{H_0.51,epsilon_0.1}
\end{subfigure}
\begin{subfigure}[b]{0.32\linewidth}
\includegraphics[width=\linewidth]{Figures/Trajectories_H051_epsilon_0005.png}
\caption{}
\label{H_0.51,epsilon_1}
\end{subfigure}
\caption{Sample path with Hurst parameter $H=0.51$ and different values of $\varepsilon$.}
\label{tab_fig_H0.51}
\end{figure}

\begin{figure}[H]
\centering
\begin{subfigure}[b]{0.32\linewidth}
\includegraphics[width=\linewidth]{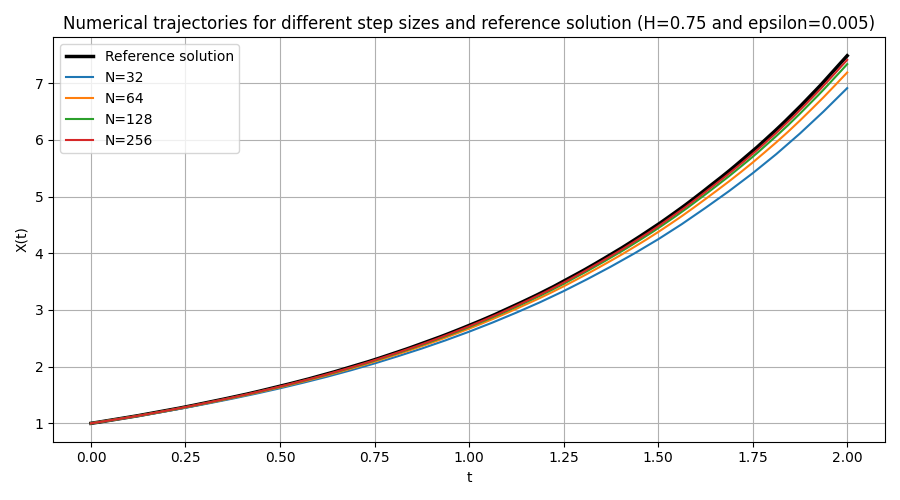}
\caption{}
\label{H_0.75,epsilon_0.005}
\end{subfigure}
\begin{subfigure}[b]{0.32\linewidth}
\includegraphics[width=\linewidth]{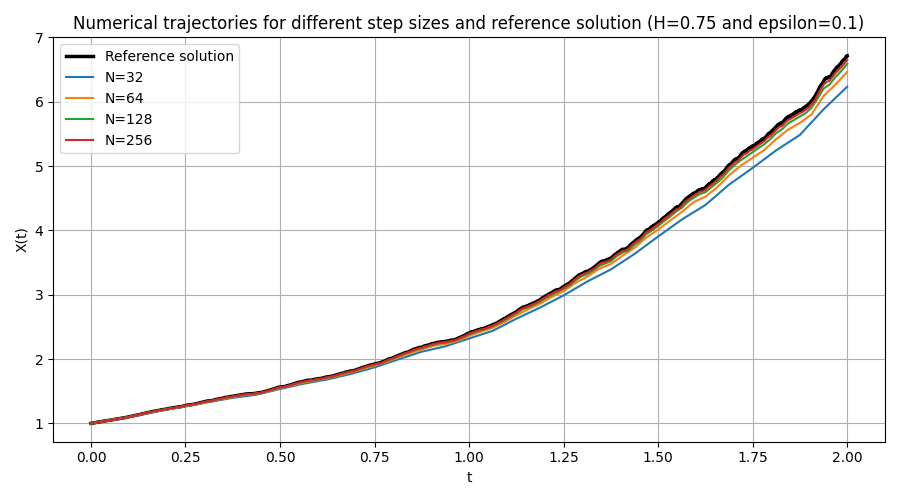}
\caption{}
\label{H_0.75,epsilon_0.1}
\end{subfigure}
\begin{subfigure}[b]{0.32\linewidth}
\includegraphics[width=\linewidth]{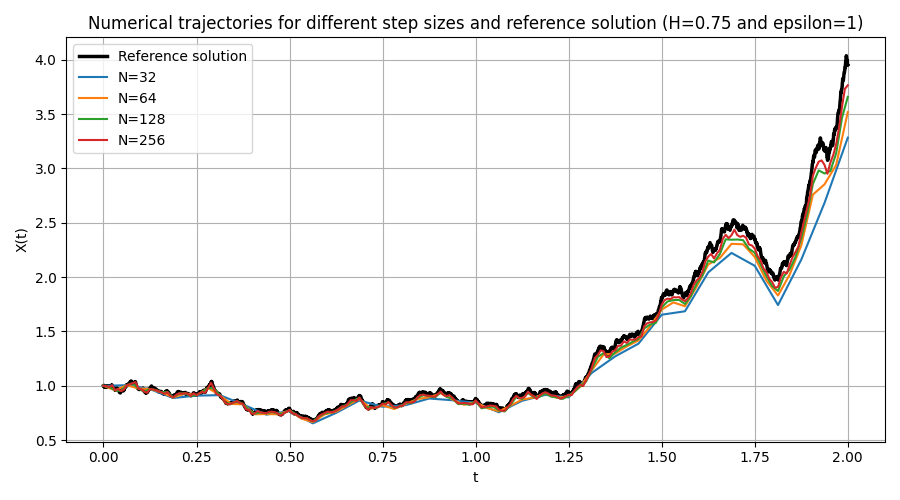}
\caption{}
\label{H_0.75,epsilon_1}
\end{subfigure}
\caption{Sample path with Hurst parameter $H=0.75$ and different values of $\varepsilon$.}
\label{tab_fig_H0.75}
\end{figure}

\begin{figure}[H]
\centering
\begin{subfigure}[b]{0.32\linewidth}
\includegraphics[width=\linewidth]{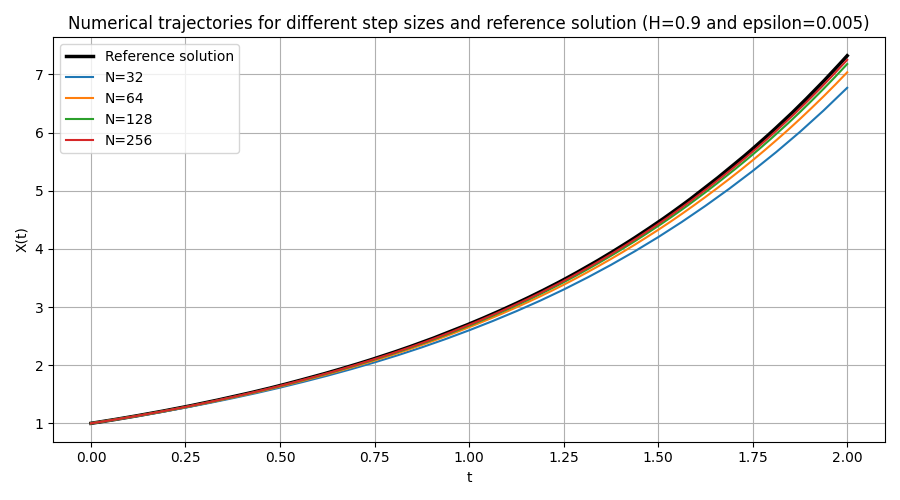}
\caption{}
\label{H_0.9,epsilon_0.005}
\end{subfigure}
\begin{subfigure}[b]{0.32\linewidth}
\includegraphics[width=\linewidth]{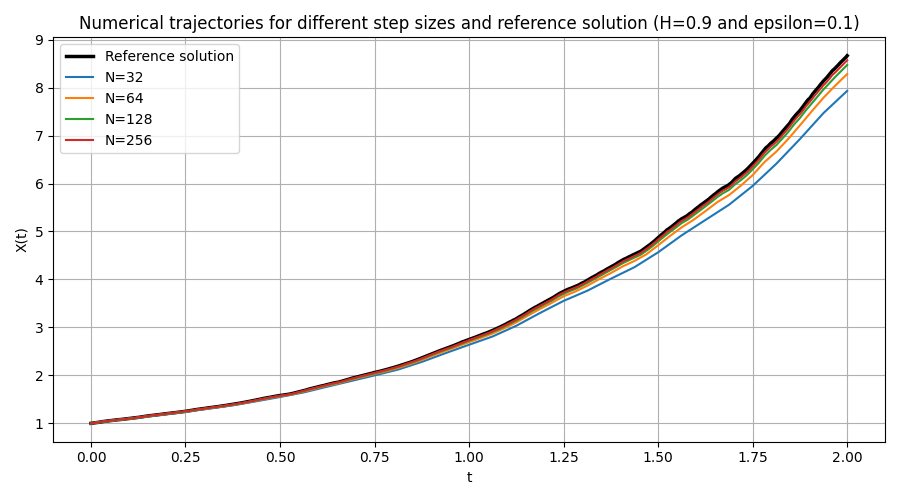}
\caption{}
\label{H_0.9,epsilon_0.1}
\end{subfigure}
\begin{subfigure}[b]{0.32\linewidth}
\includegraphics[width=\linewidth]{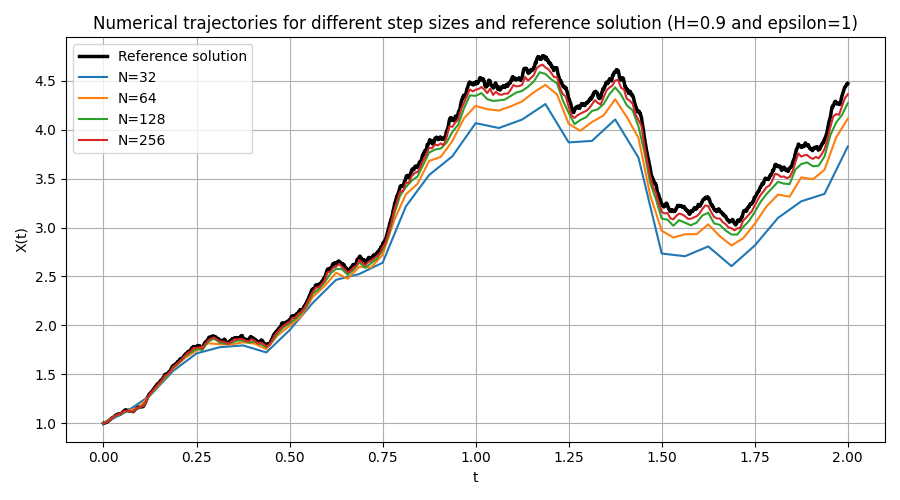}
\caption{}
\label{H_0.9,epsilon_1}
\end{subfigure}
\caption{Sample path with Hurst parameter $H=0.9$ and different values of $\varepsilon$.}
\label{tab_fig_H0.9}
\end{figure}

The results clearly demonstrate the convergence of the numerical method as the step size decreases. Moreover, the observed ratios indicate a consistent reduction of the error when refining the time discretization, confirming the stability and reliability of the proposed Euler-type scheme for stochastic functional differential equations driven by fractional Brownian motion.
\appendix

\section{Auxiliary Technical Lemmas}\label{Aux-lemmas}
{\bf Proof of Lemma \ref{lemmaY}}
\begin{proof}
From \eqref{eq2}, and by adding and subtracting
$
\int_{t-r}^t K(s,u-t,X(u))\,du,
$
\begin{align*}
Y(t)-Y(s) &= \int_{t-r}^t K(t,u-t,X(u))\,du -
\int_{s-r}^s K(s,u-s,X(u))\,du \\
&=
\int_{t-r}^t
\Big( K(t,u-t,X(u)) - K(s,u-t,X(u)) \Big) du \\
&\quad
+
\int_{t-r}^t
\Big( K(s,u-t,X(u)) - K(s,u-s,X(u)) \Big) du 
+
\int_{s-r}^{t-r}
K(s,u-s,X(u))\,du \\
&=: I_1 + I_2 + I_3 .
\end{align*}

Using Hypothesis 3,
\[
|I_1|
\le \int_{t-r}^t K_1 |t-s|\,du
= K_1 r |t-s|,
\]
\[
|I_2|
\le \int_{t-r}^t K_2 |(u-t)-(u-s)|\,du
= K_2 r |t-s|
\]
and 
\begin{align*}
    |I_3| &\le \int_{s-r}^{t-r} K_3 (1+|X(u)|)\,du \le K_3 |t-s|
+ K_3 \int_{s-r}^{t-r} |X(u)|\,du.
\end{align*}

Combining the three estimates yields
\[
|Y(t)-Y(s)|
\le \big( r(K_1+K_2) + K_3 \big) |t-s|
+ K_3 \int_{s-r}^{t-r} |X(u)|\,du.
\]
Hence, we obtain the result.
\end{proof}
{\bf Proof of Lemma \ref{cotadeYtilde-Xtilde}}
\begin{proof} 
From the linear growth property of $K$ \eqref{HK1}, for each $0\leq u\leq T$,
\begin{align*}
|\Tilde{Y}(C_h(u))|&\leq \sum_{i=-N}^{-1} h\cdot |K(C_h(u),t_i,\Tilde{X}^h(C_h(u)+t_i))|  \\
&\leq \sum_{i=-N}^{-1} h\cdot K_3(1+|\Tilde{X}^h(C_h(u)+t_i)|)  \\
&\leq \sum_{i=-N}^{-1} \dfrac{r}{N}\cdot K_3(1+\sup_{s\in[-r,-t_1]}|\Tilde{X}^h(C_h(u)+s)|)  \\
&\leq \sum_{i=-N}^{-1} \dfrac{r}{N}\cdot K_3(1+\sup_{x\in[-r,u-t_1]}|\Tilde{X}^h(x)|) \\
&\leq rK_3(1+\sup_{x\in[-r,u-h]}|\Tilde{X}^h(x)|) \leq rK_3(1+\Tilde{F}(u-h)).
\end{align*} 
\end{proof}

{\bf Proof of Lemma \ref{cotadeDYtilde-DXtilde}}
\begin{proof}
From Hypothesis 3, 
\begin{align*}\label{DeltaYtildeChu-v}
    |\Tilde{Y}(u)-\Tilde{Y}(v)|
   \leq & \sum_{i=-N}^{-1} h|K(u,t_i,\Tilde{X}^h(u+t_i))-K(v,t_i,\Tilde{X}^h(v+t_i))| \nonumber \\
        \leq  &\sum_{i=-N}^{-1} h(K_1|u-v| + K_2|\Tilde{X}^h(u+t_i)-\Tilde{X}^h(v+t_i)|) \nonumber \\
        =& hK_1 \sum_{i=-N}^{-1} |u-v| + K_2 \sum_{i=-N}^{-1}  \int_{t_i}^{t_{i+1}} |\Tilde{X}^h(u+C_h(s))-\Tilde{X}^h(v+C_h(s))|{d}s \nonumber \\
        = &rK_1|u-v| + K_2 \int_{-r}^{0} |\Tilde{X}^h(u+C_h(s))-\Tilde{X}^h(v+C_h(s))|{d}s.
        \end{align*}
\end{proof}

{\bf Proof of Lemma \ref{cota-Xtilde}}
\begin{proof}
From \eqref{EulerInterpolado} and  \eqref{eq5}, we get
\begin{align*}
&|\Tilde{X}^h(v)-\Tilde{X}^h(C_h(v))|\\
\le& \left|\int_{C_h(v)}^v b(C_h(u),\Tilde{X}^h(C_h(u)),\Tilde{Y}(C_h(u)))\,du\right| + \left|\int_{C_h(v)}^v \sigma(C_h(u),\Tilde{X}^h(C_h(u)),\Tilde{Y}(C_h(u)))\,dB^H(u)\right| \\
=& |b(C_h(v),\Tilde{X}^h(C_h(v)),\Tilde{Y}(C_h(v)))|(v-C_h(v)) \\
&\quad + |\sigma(C_h(v),\Tilde{X}^h(C_h(v)),\Tilde{Y}(C_h(v)))|
|B^H(v)-B^H(C_h(v))| \\
\le & L_3(1+|\Tilde{X}^h(C_h(v))|+|\Tilde{Y}(C_h(v))|) \times \Big((v-C_h(v))
+ |B^H(v)-B^H(C_h(v))|\Big).
\end{align*}
Using \eqref{eqdefc0},
\[
|B^H(v)-B^H(C_h(v))|
\le C(\omega)(v-C_h(v))^{H-\rho},
\]
since $v-C_h(v)\le T^{1-H+\rho}(v-C_h(v))^{H-\rho}$ we obtain
\begin{align*}
|\Tilde{X}^h(v)-\Tilde{X}^h(C_h(v))|
&\le L_3\big(T^{1-H+\rho}+ C(\omega)\big)
\big(1+|\Tilde{X}^h(C_h(v))|+|\Tilde{Y}(C_h(v))|\big)
(v-C_h(v))^{H-\rho}.
\end{align*}
Using Lemma \ref{cotadeYtilde-Xtilde},
\[
|\Tilde{Y}(C_h(v))|
\le rK_3(1+\Tilde{F}(v-h)),
\]
and defining
\(
C(\omega)=L_3\big(T^{1-H+\rho}+ C(\omega)\big),
\)
we obtain the result.
\end{proof}

{\bf Proof of Lemma \ref{X-CHU-v}}
\begin{proof} 
 From \eqref{EulerInterpolado}, the  bounds stated in Hypothesis 1, \eqref{eqcotaintegral} and the fact that $C_h(w)=C_h(z)$ for $C_h(w)\leq z < w$, together with the observation that 
 $C_h(w)-C_h(z)\leq w-z+h$, we have 
\begin{align*}
      &  |\Tilde{X}^h(C_h(u))-\Tilde{X}^h(v)|\nonumber \\
      \leq& C(\omega)\left(\int_{v}^{C_h(u)} (1+|\Tilde{X}^h(C_h(w))|+|\Tilde{Y}(C_h(w))|){d}w \right.  \nonumber\\
        &+ \int_v^{C_h(u)} \dfrac{(1+|\Tilde{X}^h(C_h(w))|+|\Tilde{Y}(C_h(w))|)}{(w-v)^{\beta}}{d}w  \nonumber \\
        &\left.+ \int_v^{C_h(u)} \int_v^w \dfrac{|C_h(w)-C_h(z)|+|\Tilde{X}^h(C_h(w))-\Tilde{X}^h(C_h(z))|+|\Tilde{Y}(C_h(w))-\Tilde{Y}(C_h(z))|}{(w-z)^{\beta+1}}{d}z{d}w\right).
        \end{align*}
Using Remark \ref{u-cu-beta}, see \eqref{u-chu}, we derive the inequality
\begin{align*}
&  |\Tilde{X}^h(C_h(u))-\Tilde{X}^h(v)|\nonumber \\
         \leq & C(\omega)\left((C_h(u)-v)^{1-\beta} + \int_v^{C_h(u)} \dfrac{|\Tilde{X}^h(C_h(w))|}{(w-v)^{\beta}}{d}w + \int_v^{C_h(u)} \dfrac{|\Tilde{Y}(C_h(w))|}{(w-v)^{\beta}}{d}w \right.\nonumber \\
        & + h\int_v^{C_h(u)} (w-C_h(w))^{-\beta}{d}w + \int_v^{C_h(u)} \int_v^{C_h(w)} \dfrac{|\Tilde{X}^h(C_h(w))-\Tilde{X}^h(z)|}{(w-z)^{\beta+1}}{d}z{d}w \nonumber\\
        & + \int_v^{C_h(u)} \int_v^{C_h(w)} \dfrac{|\Tilde{X}^h(z)-\Tilde{X}^h(C_h(z))|}{(w-z)^{\beta+1}}{d}z{d}w  \left. + \int_v^{C_h(u)} \int_v^{C_h(w)} \dfrac{|\Tilde{Y}(C_h(w))-\Tilde{Y}(C_h(z))|}{(w-z)^{\beta+1}}{d}z{d}w\right).
    \end{align*}

Multiplying by $(u-v)^{-\beta-1}$ and integrating over $[0,C_h(u)]$, we obtain  that   
\begin{align}\label{I4-1}
&\int_{0}^{C_h(u)}|\Tilde{X}^h(C_h(u))-\Tilde{X}^h(v)|(u-v)^{-\beta-1}{d}v \nonumber \\  
   \leq & C(\omega)\left(Q^1(u) + Q^2(u) + Q^3(u) + Q^4(u) + Q^5(u) + Q^6(u) + Q^7(u) \right),
\end{align}
where
\begin{align*}
    Q^1(u) &= \int_{0}^{C_h(u)}(C_h(u)-v)^{1-\beta}(u-v)^{-\beta-1}dv, \nonumber\\
    Q^2(u) &= \int_{0}^{C_h(u)}(u-v)^{-\beta-1}\int_v^{C_h(u)} \dfrac{|\Tilde{X}^h(C_h(w))|}{(w-v)^{\beta}}{d}w{d}v, \nonumber\\
    Q^3(u) &= \int_0^{C_h(u)} (u-v)^{-\beta-1} \int_v^{C_h(u)} \dfrac{|\Tilde{Y}(C_h(w))|}{(w-v)^{\beta}}{d}w{d}v \nonumber\\
    Q^4(u) &= h\int_{0}^{C_h(u)}(u-v)^{-\beta-1}\int_v^{C_h(u)} (w-C_h(w))^{-\beta}{d}w{d}v, \nonumber\\
    Q^5(u) &= \int_{0}^{C_h(u)}(u-v)^{-\beta-1}\int_v^{C_h(u)} \int_v^{C_h(w)} \dfrac{|\Tilde{X}^h(C_h(w))-\Tilde{X}^h(z)|}{(w-z)^{\beta+1}}{d}z{d}w{d}v,\nonumber \\
    Q^6(u) &= \int_{0}^{C_h(u)}(u-v)^{-\beta-1}\int_v^{C_h(u)} \int_v^{C_h(w)} \dfrac{|\Tilde{X}^h(z)-\Tilde{X}^h(C_h(z))|}{(w-z)^{\beta+1}}{d}z{d}w{d}v \nonumber\\
    Q^7(u) &= \int_0^{C_h(u)} (u-v)^{-\beta-1} \int_v^{C_h(u)} \int_v^{C_h(w)} \dfrac{|\Tilde{Y}(C_h(w)) - \Tilde{Y}(C_h(z))|}{(w-z)^{\beta+1}}{d}z{d}w{d}v.
\end{align*}
Noting that $0 < \beta < 1/2$, $C_h(u)-v\leq u-v,$ it follows that
\begin{align}\label{Q1}
    Q^1(u) \leq \int_0^{C_h(u)} (u-v)^{-2\beta}{d}v \leq \dfrac{u^{1-2\beta}}{1-2\beta}. 
\end{align}
Exchanging the integration limits in $Q^2(u)$ and $Q^3(u)$, making a  change of variable as in \citep{Mishura2008}, and taking  $C=\int_0^{\infty} (1+y)^{-\beta-1}y^{-\beta}{d}y$, we obtain
\begin{align}\label{Q2}
    Q^2(u) &= \int_{0}^{C_h(u)} |\Tilde{X}^h(C_h(w))|\int_0^{w} (u-v)^{-\beta-1}(w-v)^{-\beta}{d}v{d}w \leq C\int_{0}^{C_h(u)} \frac{|\Tilde{X}^h(C_h(w))|}{(u-w)^{2\beta}}{d}w.
\end{align}
Since $\widetilde F$ is nondecreasing, we have 
$\widetilde F(w-h)\le \widetilde F(w)$, and
\begin{align}\label{Q3}
    Q^3(u) &= \int_0^{C_h(u)} |\Tilde{Y}(C_h(w))| \int_0^w (u-v)^{-\beta-1}(w-v)^{-\beta}{d}v{d}w \nonumber \\
    &\leq C\int_0^{C_h(u)} rK_3(1+\Tilde{F}(w-h))(u-w)^{-2\beta}{d}w \nonumber \\
    &\leq CrK_3 \int_0^{C_h(u)} (u-w)^{-2\beta}{d}w + CrK_3\int_0^{C_h(u)} \Tilde{F}(w-h)(u-w)^{-2\beta}{d}w \nonumber \\
    &\leq C \left(u^{1-2\beta} + \int_0^{C_h(u)} \frac{\Tilde{F}(w)}{(u-w)^{2\beta}}{d}w\right).
    \end{align}
From Lemma 1 in \citep{Mishura2008}, it follows directly that $ Q^4(u) \leq C h^{1-\beta}$.

Exchanging the integration limits in $Q^5(u)$, we obtain
\begin{align}\label{Q5}
    Q^5(u) &\leq \int_{0}^{C_h(u)} \int_0^{C_h(w)} \int_0^{w} (u-v)^{-\beta-1}\dfrac{|\Tilde{X}^h(C_h(w))-\Tilde{X}^h(z)|}{(w-z)^{\beta+1}}{d}v{d}z{d}w \nonumber \\
    &\leq \dfrac{1}{\beta}\int_{0}^{C_h(u)} (u-w)^{-\beta} \int_0^{C_h(w)}\dfrac{|\Tilde{X}^h(C_h(w))-\Tilde{X}^h(z)|}{(w-z)^{\beta+1}}   {d}z{d}w.
\end{align}
For $Q^6(u)$, we use Lemma \ref{cota-Xtilde}, \eqref{Ftilde} and  
\eqref{u-chu} to the interval $[v,C_h(u)]$ to deduce
\begin{align}\label{Q6}
    Q^6(u) &\leq C(\omega)  h^{H-\rho-\beta}\left(1 + \Tilde{F}(C_h(u)) \right). 
\end{align}
Finally, we estimate $Q^7(u)$. Using the discrete structural estimate of the delay functional (Lemma \ref{cotadeDYtilde-DXtilde}), which mirrors 
Lemma \ref{lemmaY},
\begin{align}\label{Q7}
&Q^7(u)  \leq \int_0^{C_h(u)} (u-v)^{-\beta-1}\int_v^{C_h(u)} \int_v^{C_h(w)} \dfrac{rK_1(C_h(w)-C_h(z))}{(w-z)^{\beta+1}}{d}z{d}w{d}v \nonumber  \\
&+ \int_0^{C_h(u)}\!\!\! (u-v)^{-\beta-1}\!\!\!\int_v^{C_h(u)}\!\! \int_v^{C_h(w)}\!\! \int_{-r}^0 \!\dfrac{K_2|\Tilde{X}^h(C_h(w)+C_h(s))-\Tilde{X}^h(C_h(z)+C_h(s))|}{(w-z)^{\beta+1}} {d}s {d}z{d}w{d}v. 
\end{align}
Using $C_h(w)-C_h(z)\le (w-z)+h$ and arguing as in the estimate of $Q^4(u)$,
we obtain the bound
\begin{align}
\label{Q7-3}
Q^7(u)&\le C
+ rK_1 h \int_0^{C_h(u)} (u-v)^{-\beta-1}
\int_v^{C_h(u)} (w-C_h(w))^{-\beta}\,dw\,dv \nonumber\\
&\quad
+ \frac{K_2}{\beta}
\int_0^{C_h(u)} (u-w)^{-\beta}
\int_0^{C_h(w)} \int_{-r}^0
\dfrac{|\Tilde X^h(C_h(w)+C_h(s))
-\Tilde X^h(C_h(z)+C_h(s))|}
{(w-z)^{\beta+1}}
\,ds\,dz\,dw \nonumber\\
&\leq\! C\!+\! \dfrac{K_2}{\beta}\!\int_0^{C_h(u)}\!\! (u-w)^{-\beta}\int_0^{C_h(w)}\!\! \int_{-r}^0  \dfrac{ |\Tilde{X}^h(C_h(w)+C_h(s))-\Tilde{X}^h(C_h(z)+C_h(s))|}{(w-z)^{\beta+1}} ds {d}z{d}w. 
\end{align}

By \eqref{I4-1} and the estimates given in \eqref{Q1} - \eqref{Q7-3},
\begin{align*}
\hspace{-2cm}&\int_0^{C_h(u)} |\Tilde{X}^h(C_h(u))-\Tilde{X}^h(v)|(u-v)^{-\beta-1}{d}v \\
  \leq & C(\omega)\left(1 + \int_0^{C_h(u)} |\Tilde{X}^h(w)|(u-w)^{-2\beta}{d}w + \int_0^{C_h(u)} \Tilde{F}(w)(u-w)^{-2\beta}{d}w \right. \notag\\
 &+ \int_0^{C_h(u)} (u-w)^{-\beta}\int_0^{C_h(w)} \dfrac{|\Tilde{X}^h(C_h(w))-\Tilde{X}^h(z)|}{(w-z)^{\beta+1}}{d}z{d}w + \Tilde{F}(C_h(u))h^{H-\rho-\beta} \notag\\
 & \left. + \int_0^{C_h(u)} (u-w)^{-\beta}\int_0^{C_h(w)} \int_{-r}^0 \dfrac{ |\Tilde{X}^h(C_h(w)+C_h(s))-\Tilde{X}^h(C_h(z)+C_h(s))|{d}s}{(w-z)^{\beta+1}}{d}z  {d}w\right).\notag
\end{align*}
\end{proof}

{\bf Proof of Lemma \ref{Cota- I(w)}}
\begin{proof} 
For  $-r<a<b<0$, let 
$J(a,b):= \displaystyle \int_{a}^{b}
\big|\widetilde X^h(C_h(w)+C_h(s))-\widetilde X^h(C_h(z)+C_h(s))\big|ds$.
We partition the integration interval with respect to $s \in [-r,0]$ into  four subintervals  as

\[
[-r,-C_h(w)],\quad [-C_h(w), -z ] ,\quad [-z,-C_h(z)],\quad [-C_h(z),0].
\]

\textbf{Case 1} (First subinterval $[-r,-C_h(w)]$). Within this subinterval, the arguments of $\widetilde{X}^h$ inside the integral $J(-r,-C_h(w))$ are negative or zero (lying within the region where the initial condition $\phi$ is applied). By invoking the Hölder continuity and local regularity assumptions \eqref{eqphi}, we obtain the following point-wise control:


\[
\big|\widetilde X^h(C_h(w)+C_h(s))-\widetilde X^h(C_h(z)+C_h(s))\big|
\le C_1(\omega)\,|\,C_h(w)-C_h(z)\,|^{H-\rho}.
\]
Then
\[
J(-r, -C_h(w)) \le r\,C_1(\omega)\,|C_h(w)-C_h(z)|^{H-\rho} 
\]
Using $C_h(w)-C_h(z)\le (w-z)+h$ and arguing as in Remark \ref{u-cu-beta},
we obtain
\begin{align}\label{Caso1}
&\int_{0}^{C_h(w)}(w-z)^{-\beta-1} J(-r, -C_h(w)) dz
\leq rC_1(\omega)  \int_{0}^{C_h(w)}(w-z)^{-\beta-1}
(C_h(w)- C_h(z))^{H-\rho }dz \nonumber \\
&\quad \leq rC_1(\omega) \left(  w^{-\beta + H - \rho} + h ^{H - \rho}(w- C_h(w))^{-\beta} ) \right). 
\end{align}

\textbf{Case 2} (Second subinterval $[-C_h(w), -z ]$). In this case, $-C_h(w) \leq s \leq -z \leq -C_h(z)$,
which implies that $C_h(w) + C_h(s) > 0$ and $C_h(s) + C_h(z) < 0$.
By adding and subtracting $\widetilde{X}^h(0)$ and applying the triangle inequality, we obtain:
\begin{align*}
&\big|\widetilde X^h(C_h(w)+C_h(s))-\widetilde X^h(C_h(z)+C_h(s))\big|\\
\le & C_1(\omega)\left( \big|\widetilde X^h(C_h(w)+C_h(s))-\widetilde X^h(0)\big| + |C_h(z)+C_h(s)|^{H-\rho} \right) \nonumber \\
 \leq & C_1(\omega)\left( \big|\widetilde X^h(C_h(w)+C_h(s))-\widetilde X^h(0)\big| + |C_h(w)-C_h(z)|^{H-\rho} \right)  \nonumber \\
 \leq & C_1(\omega) ( 1 + \widetilde F(C_h(w)) + |C_h(w)-C_h(z)|^{H-\rho} ).
\end{align*}
Recall that the length of this interval is $C_h(w)-z$.
Using $C_h(w)-C_h(z)\le (w-z)+h$ and the monotonicity of $\widetilde F$, we obtain
\begin{align}\label{Caso2-1}
&\int_{0}^{C_h(w)}(w-z)^{-\beta-1}
\int_{-C_h(w)}^{-z}
\big|\widetilde X^h(C_h(w)+C_h(s))-\widetilde X^h(C_h(z)+C_h(s))\big|dsdz \nonumber \\
\leq &C_1(\omega) \int_{0}^{C_h(w)} (C_h(w)-z) (w-z)^{-\beta-1}
(C_h(w)- C_h(z))^{H-\rho } dz \nonumber \\
\leq &C_1(\omega)\big( 1 + \widetilde F(C_h(w))  \big).
\end{align}

\textbf{Case 3} (Third subinterval $[-z,-C_h(z)]$).
In this case, $-C_h(w) \leq -z \leq s \leq -C_h(z)$.
Hence, we have $C_h(w)+C_h(s)>0$ and $C_h(s)+C_h(z)<0$. Proceeding as in Case 2 and using the triangle inequality, we obtain
\begin{align*}
&\big|\widetilde X^h(C_h(w)+C_h(s))-\widetilde X^h(C_h(z)+C_h(s))\big|\\
\le &C_1(\omega)\left( \big|\widetilde X^h(C_h(w)+C_h(s))-\widetilde X^h(0)\big| + |C_h(w)-C_h(z)|^{H-\rho} \right)  \nonumber \\
 \leq & C_1(\omega) ( 2\widetilde F(C_h(w)) + |C_h(w)-C_h(z)|^{H-\rho} ).
\end{align*}

Since we work in the interval $[-z,-C_h(z)]$, whose length is smaller than $h$, we get 
\begin{align}\label{Caso3-1}
&\int_{0}^{C_h(w)}(w-z)^{-\beta-1}
\int_{-z}^{-C_h(z)}
\big|\widetilde X^h(C_h(w)+C_h(s))-\widetilde X^h(C_h(z)+C_h(s))\big|dsdz \nonumber \\
\leq &hC_1(\omega) \int_{0}^{C_h(w)}(w-z)^{-\beta-1} 
(C_h(w)- C_h(z))^{H-\rho } dz \nonumber \\
\leq & h C_1(\omega) \left( 
w^{H-\rho-\beta} 
+ (w-C_h(w))^{-\beta}  (1+\widetilde F(C_h(w)))\right).
\end{align}


\textbf{Case 4} (Final subinterval $[-C_h(z),0]$). On this subinterval it holds that
$-C_h(z) \leq s \leq 0$,
 $-C_h(z) \le C_h(s)$
 and
$-C_h(w) \le C_h(s) $.
Then  $C_h(w) + C_h(s) >0$ and $C_h(s) + C_h(z) >0$. 
We leave the integral in its original form, 
\begin{align}\label{caso4} 
& 
\int_{-C_h(z)}^{0}
\big|\widetilde X^h(C_h(w)+C_h(s))-\widetilde X^h(C_h(z)+C_h(s))\big|ds.
\end{align}
\\
In summary, from \eqref{Caso1}, \eqref{Caso2-1}, \eqref{Caso3-1}, and \eqref{caso4}, we obtain
\begin{align*}
    I(w) \leq & C_1(\omega) \left( 1+ \widetilde F(C_h(w))  +  h ^{H - \rho}(w- C_h(w))^{-\beta}  +  h  (w-C_h(w))^{-\beta}  \widetilde F(C_h(w)) \right. \nonumber \\
& + \left.
\int_0^{C_h(w)} (w-z)^{-\beta-1} \int_{-C_h(z)}^{0}
\big|\widetilde X^h(C_h(w)+C_h(s))-\widetilde X^h(C_h(z)+C_h(s))\big|\,dsdz
\right).
\end{align*}
\end{proof}

{\bf Proof of Lemma \ref{dif-dif}}
\begin{proof}
We observe that 
\begin{align*} 
|\tilde{X}^h(C_h(w)+C_h(s))-\tilde{X}^h(C_h(z)+C_h(s))| \leq & |\tilde{X}^h(C_h(w)+C_h(s))-\tilde{X}^h(z+C_h(s))| \\
&+ |\tilde{X}^h(z+C_h(s))-\tilde{X}^h(C_h(z)+C_h(s))|.
\end{align*}

Noting that $C_h(z)+C_h(s)=C_h(z+C_h(s)),$ then from Lemma \ref{cota-Xtilde},
\begin{align*}
   & |\tilde{X}^h(z+C_h(s))-\tilde{X}^h(C_h(z)+C_h(s))| \\
    \leq& C(\omega)(1 + |\tilde{X}^h(C_h(z)+C_h(s))| + rK_3(1+\tilde{F}(z+C_h(s)-h))) (z-C_h(z))^{H-\rho} \\
    \leq& C(\omega)(1 + \tilde{F}(C_h(z)) + rK_3(1+\tilde{F}(C_h(z))))h^{H-\rho} \\
    \leq& C(\omega)(1 + \tilde{F}(C_h(z)))h^{H-\rho}.
\end{align*}

Integrating with respect to  $s$ over $[-C_h(z),0]$,   multiplying by $(w-z)^{-\beta-1}$ and integrating with respect to $z$ over $[0,C_h(w)]$ we  obtain 
\begin{align*}
    &\int_0^{C_h(w)}(w-z)^{-\beta-1}\int_{-C_h(z)}^0 |\tilde{X}^h(C_h(w)+C_h(s))-\tilde{X}^h(z+C_h(s))|\,dsdz \\
    &\leq C(\omega)h^{H-\rho}\int_0^{C_h(w)}(w-z)^{-\beta-1}(1 + \tilde{F}(C_h(z)))\int_{-C_h(z)}^0 \,dsdz \\
    &\leq C(\omega)h^{H-\rho}\int_0^{C_h(w)}(w-z)^{-\beta-1}(1 + \tilde{F}(C_h(z))) dz \\
    &\leq C(\omega)(1+\tilde{F}(C_h(w)))h^{H-\rho}\int_0^{C_h(w)}(w-z)^{-\beta-1} dz \\
    &\leq C(\omega)(1+\tilde{F}(C_h(w)))h^{H-\rho} (w-C_h(w))^{-\beta}.
\end{align*}

Let us analyze now
\[
|\tilde{X}^h(C_h(w)+C_h(s))
-\tilde{X}^h(z+C_h(s))|.
\]
Applying inequality \ref{eqcotaintegral} on the interval 
$[z+C_h(s),\, C_h(w)+C_h(s)]$, we obtain
\begin{align}\label{CHS-new}
&|\tilde{X}^h(C_h(w)+C_h(s))
-\tilde{X}^h(z+C_h(s))|  \nonumber \\
\le& C(\omega)\Bigg[
\int_{z+C_h(s)}^{C_h(w)+C_h(s)}
\frac{1+|\tilde X^h(C_h(v))|+|\tilde Y(C_h(v))|}
{(v-(z+C_h(s)))^\beta} dv +
\int_{z+C_h(s)}^{C_h(w)+C_h(s)}
\int_{z+C_h(s)}^{C_h(u)} \nonumber \\
& \frac{|C_h(u)-C_h(v)|
+|\tilde X^h(C_h(u))-\tilde X^h(C_h(v))|
+|\tilde Y(C_h(u))-\tilde Y(C_h(v))|}
{(u-v)^{\beta+1}}
dvdu \Bigg].
\end{align}

Let us denote by $I_c$ the double integral in \eqref{CHS-new}. We can decompose $I_c$ as follows:
\begin{align*}
I_c \leq& \int_{z+C_h(s)}^{C_h(w)+C_h(s)} \int_{z+C_h(s)}^{C_h(u)} \dfrac{|C_h(u)-C_h(v)|}{(u-v)^{\beta+1}}{d}v{d}u \nonumber \\
&+ \int_{z+C_h(s)}^{C_h(w)+C_h(s)} \int_{z+C_h(s)}^{C_h(u)} \dfrac{|\tilde{X}^h(C_h(u))-\tilde{X}^h(C_h(v))|}{(u-v)^{\beta+1}}{d}v{d}u \nonumber \\
&+ \int_{z+C_h(s)}^{C_h(w)+C_h(s)} \int_{z+C_h(s)}^{C_h(u)} \dfrac{|\tilde{Y}(C_h(u))-\tilde{Y}(C_h(v))|}{(u-v)^{\beta+1}}{d}v{d}u \nonumber \\
=:& I_{c_1} + I_{c_2} + I_{c_3}.
\end{align*}
Next, for \(I_{c_1}\), and using the inequality \(C_h(u)-C_h(v)\le u-v+h\), we deduce

\begin{align}\label{IC-1}
I_{c_1}&=\int_{z+C_h(s)}^{C_h(w)+C_h(s)} \int_{z+C_h(s)}^{C_h(u)} \dfrac{|C_h(u)-C_h(v)|}{(u-v)^{\beta+1}}{d}v{d}u \nonumber  \\
&\leq \int_{z+C_h(s)}^{C_h(w)+C_h(s)} \int_{z+C_h(s)}^{C_h(u)} (u-v)^{-\beta}{d}v{d}u + \int_{z+C_h(s)}^{C_h(w)+C_h(s)} \int_{z+C_h(s)}^{C_h(u)} \dfrac{h}{(u-v)^{\beta+1}}{d}v{d}u \nonumber \\
&\leq \dfrac{1}{1-\beta} \int_{z+C_h(s)}^{C_h(w)+C_h(s)} (u-z-C_h(s))^{1-\beta}{d}u + \dfrac{h}{\beta}\int_{z+C_h(s)}^{C_h(w)+C_h(s)} (u-C_h(u))^{-\beta}{d}u \nonumber \\
&\leq C (C_h(w)-z)^{2-\beta} + \dfrac{h}{\beta}\int_{z+C_h(s)}^{0} (u-C_h(u))^{-\beta}{d}u + \dfrac{h}{\beta}\int_{0}^{C_h(w)+C_h(s)} (u-C_h(u))^{-\beta}{d}u.
\nonumber \\
&\leq C (C_h(w)-z)^{2-\beta} + \dfrac{(C_h(w)-z))h^{1-\beta}}{\beta}\nonumber\\
&\leq C \left((C_h(w)-z)^{2-\beta} + (C_h(w)-z)h^{1-\beta} \right),
\end{align}
where the last inequality follows from the expansion in \eqref{u-chu} and from the fact that this inequality remains valid for negative values, since the interval $[-r,0]$ is partitioned into the subintervals $t_i$ together with \eqref{u-chu}.  

For the second integral $I_{c_2}$, we further decompose it into two integrals:
\begin{align}\label{IC-2}
I_{c_2}=& \int_{z+C_h(s)}^{C_h(w)+C_h(s)} \int_{z+C_h(s)}^{C_h(u)} \dfrac{|\tilde{X}^h(C_h(u))-\tilde{X}^h(C_h(v))|}{(u-v)^{\beta+1}}{d}v{d}u \nonumber \\
\leq & \int_{z+C_h(s)}^{C_h(w)+C_h(s)} \int_{z+C_h(s)}^{C_h(u)} \dfrac{|\tilde{X}^h(C_h(u))-\tilde{X}^h(v)|}{(u-v)^{\beta+1}}{d}v{d}u \nonumber \\ 
&+ \int_{z+C_h(s)}^{C_h(w)+C_h(s)} \int_{z+C_h(s)}^{C_h(u)} \dfrac{|\tilde{X}^h(v)-\tilde{X}^h(C_h(v))|}{(u-v)^{\beta+1}}{d}v{d}u.
\end{align}

From Lemma \eqref{cota-Xtilde}  
\begin{equation*}
     |\Tilde{X}^h(v)-\Tilde{X}^h(C_h(v))| \leq C(\omega) \left(1+|\Tilde{X}^h(C_h(v))|+rK_3(1+\Tilde{F}(v-h))\right)(v-C_h(v))^{H-\rho},
\end{equation*}
then
\begin{align}\label{IC2-1}
&\int_{z+C_h(s)}^{C_h(w)+C_h(s)} \int_{z+C_h(s)}^{C_h(u)} \dfrac{|\tilde{X}^h(v)-\tilde{X}^h(C_h(v))|}{(u-v)^{\beta+1}}{d}v{d}u \nonumber  \\ 
&\leq \int_{z+C_h(s)}^{C_h(w)+C_h(s)} \int_{z+C_h(s)}^{C_h(u)} \dfrac{C(\omega) \left(1+|\tilde{X}^h(C_h(v))|+rK_3(1+\tilde{F}(v-h))\right)h^{H-\rho}}{(u-v)^{\beta+1}}{d}v{d}u \nonumber \\
&\leq \int_{z+C_h(s)}^{C_h(w)+C_h(s)} \int_{z+C_h(s)}^{C_h(u)} \dfrac{C(\omega) \left(1+\tilde{F}^h(C_h(u))+rK_3(1+\tilde{F}(C_h(u)))\right)h^{H-\rho}}{(u-v)^{\beta+1}}{d}v{d}u \nonumber \\
&\leq C(\omega) (1+rK_3)(1+\tilde{F}(C_h(w)))h^{H-\rho}\int_{z+C_h(s)}^{C_h(w)+C_h(s)} \int_{z+C_h(s)}^{C_h(u)} (u-v)^{-\beta-1}{d}v{d}u \nonumber \\
&\leq C(\omega) h^{H-\rho-\beta}(1+\tilde{F}(C_h(w))) (C_h(w)-z).
\end{align}
Therefore, combining \eqref{IC-1}, \eqref{IC-2} and \eqref{IC2-1}, equation \eqref{CHS-new} takes the form
\begin{eqnarray*}
|\tilde{X}^h(C_h(w)+C_h(s))-\tilde{X}^h(z+C_h(s))| \leq C(\omega)\left((C_h(w)-z)^{1-\beta} + \int_{z}^{C_h(w)} \dfrac{\tilde{F}(C_h(v))}{(v-z)^{\beta}}{d}v \right. \nonumber \\
+ (C_h(w)-z)^{2-\beta} + (C_h(w)-z)h^{1-\beta} 
+ \int_{z+C_h(s)}^{C_h(w)+C_h(s)} \int_{z+C_h(s)}^{C_h(u)} \dfrac{|\tilde{X}^h(C_h(u))-\tilde{X}^h(v)|}{(u-v)^{\beta+1}}{d}v{d}u  \nonumber \\
+ h^{H-\rho-\beta}(1+\tilde{F}(C_h(w)) (C_h(w)-z) + \left. \int_{z+C_h(s)}^{C_h(w)+C_h(s)} \int_{z+C_h(s)}^{C_h(u)} \dfrac{|\tilde{Y}(C_h(u))-\tilde{Y}(C_h(v))|}{(u-v)^{\beta+1}}{d}v{d}u \right).
\end{eqnarray*}
From Lemma \eqref{Cota- I(w)}, integrating with respect to $s$ over $[-C_h(z),0]$, and then multiplying by $(w-z)^{-\beta-1}$ and integrating with respect to $z$ over $[0,C_h(w)]$, we obtain:

\begin{align*}
&    \int_0^{C_h(w)}\int_{-C_h(z)}^0 \dfrac{ |\tilde{X}^h(C_h(w)+C_h(s))-\tilde{X}^h(C_h(z)+C_h(s))|}{(w-z)^{\beta+1}} {d}s{d}z \\
&\leq C(\omega)(R^1+R^2+R^3+R^4+R^5+R^6)(w),
\end{align*}
where
\begin{align*}
R^1(w) &= \int_{0}^{C_h(w)} (w-z)^{-\beta-1}\int_{-C_h(z)}^0 (C_h(w)-z)^{1-\beta}{d}s{d}z,\\
R^2(w) &= \int_{0}^{C_h(w)}(w-z)^{-\beta-1}\int_{-C_h(z)}^0 \int_{z}^{C_h(w)} \dfrac{\tilde{F}(C_h(v))}{(v-z)^{\beta}}{d}v{d}s{d}z, \\
R^3(w) &= h^{1-\beta}\int_0^{C_h(w)} (w-z)^{-\beta-1} (C_h(w)-z) C_h(z) dz, \\
{R^4(w)} &= \int_{0}^{C_h(w)}(w-z)^{-\beta-1} \int_{-C_h(z)}^0 \int_{z+C_h(s)}^{C_h(w)+C_h(s)} \int_{z+C_h(s)}^{C_h(u)} \dfrac{|\tilde{X}^h(C_h(u))-\tilde{X}^h(v)|}{(u-v)^{\beta+1}}{d}v{d}u{d}s{d}z, \\
R^5(w) &= \int_{0}^{C_h(w)}(w-z)^{-\beta-1} \int_{-C_h(z)}^0 C(1+rK_3)(1+\tilde{F}(C_h(w)))h^{H-\rho-\beta} (C_h(w)-z) {d}s{d}z \\
R^6(w) &= \int_0^{C_h(w)} (w-z)^{-\beta-1} \int_{-C_h(z)}^0 \int_{z+C_h(s)}^{C_h(w)+C_h(s)} \int_{z+C_h(s)}^{C_h(u)} \dfrac{|\tilde{Y}(C_h(u)) - \tilde{Y}(C_h(v))|}{(u-v)^{\beta+1}}{d}v{d}u{d}s{d}z.
\end{align*}

For the term $R^1$
 we obtain the following.
\begin{align}\label{R1}
    R^1(w) &=  \int_{0}^{C_h(w)} (w-z)^{-\beta-1}\int_{-C_h(z)}^0 (C_h(w)-z)^{1-\beta}{d}s{d}z\leq \int_0^{C_h(w)} z(w-z)^{1-\beta}{d}z \leq C w^{3-\beta}.
\end{align}
A direct computation of   $R^2$ yields (see equation (17) in \citep{Mishura2008}
\begin{align}\label{R2}
    R^2(w) &= \int_{0}^{C_h(w)} C_h(z)|\Tilde{F}^h(C_h(z))|\int_z^{C_h(w)} (w-z)^{-\beta-1}(v-z)^{-\beta}{d}v{d}z \nonumber \\
    &\leq C\int_{0}^{C_h(w)} \frac{|\Tilde{F}^h(C_h(z))|}{(w-z)^{2\beta}}{d}z.
\end{align}
For $R^3,$ 
\begin{align}\label{R3}
    R^3(w) &= h^{1-\beta}\int_0^{C_h(w)} (w-z)^{-\beta-1} (C_h(w)-z) C_h(z)\, dz \leq Ch^{1-\beta}
\end{align}

For the term $R^4$, instead of computing explicitly the stepwise
decomposition in $s$ and the discrete summation, we observe that after
rearranging the order of integration its structure is
\begin{align}\label{R4}
R^4(w) & = 
\int_{0}^{C_h(w)} (w-u)^{-\beta}
\int_{0}^{C_h(u)}
\frac{|\tilde{X}^h(C_h(u))-\tilde{X}^h(C_h(v))|}
{(u-v)^{\beta+1}}
\, dv\, du.
\end{align}
This coincides with the fractional convolution structure appearing
in inequality \ref{eqcotaintegral}, and therefore we keep this operator form
without expanding the discrete summation further.

For $R^5$ we have 
\begin{align}\label{R5}
R^5(w) &= \int_{0}^{C_h(w)}(w-z)^{-\beta-1} \int_{-C_h(z)}^0 C(1+rK_3)(1+\tilde{F}(C_h(w)))h^{H-\rho-\beta} (C_h(w)-z) {d}s{d}z \nonumber \\
&\leq C(1+rK_3) h^{H-\rho -\beta} w (1+\tilde{F}(w)).
\end{align}


Finally, for $R^6$ we use Lemma \ref{cotadeDYtilde-DXtilde}, which yields
\[
|\Tilde{Y}(C_h(u))-\Tilde{Y}(C_h(v))|
\le
rK_1 |u-v|
+
K_2
\int_{-r}^{0}
|\Tilde{X}^h(C_h(u)+C_h(s))
-\Tilde{X}^h(C_h(v)+C_h(s))|
\,ds.
\]

Substituting into $R^6(w)$ we obtain
\begin{align*}
&R^6(w)
\le
C \int_{0}^{C_h(w)} (w-u)^{-\beta}
\int_{0}^{C_h(u)}
\frac{|u-v|}{(u-v)^{\beta+1}}
\,dv\,du \\
&
+
C \int_{0}^{C_h(w)} (w-u)^{-\beta}
\int_{0}^{C_h(u)}
\int_{-r}^{0}
\frac{
|\Tilde{X}^h(C_h(u)+C_h(s))
-\Tilde{X}^h(C_h(v)+C_h(s))|
}{(u-v)^{\beta+1}}
\,ds\,dv\,du.
\end{align*}
Using Remark \ref{u-cu-beta}, the first term reduces to
\[
C \int_{0}^{C_h(w)} (w-u)^{-\beta}
\int_{0}^{C_h(u)} (u-v)^{-\beta}
\,dv\,du
\le
C(1 + h^{1-\beta}).
\]

For $u \in [0,T]$ define
\[
I(u)
=
\int_{0}^{C_h(u)} (u-v)^{-\beta-1}
\int_{-r}^{0}
|\widetilde X^h(C_h(u)+C_h(s))
-\widetilde X^h(C_h(v)+C_h(s))|
\,ds\,dv.
\]
Then observe that this coincides with the operator studied in Lemma \ref{Cota- I(w)} (with $w$ replaced by $u$). Therefore
\[
R^6(w)
\le
C(\omega)\left(
1 + h^{1-\beta}
+ \int_{0}^{C_h(w)} (w-u)^{-\beta} I(u)\,du
\right).
\]
Applying the bound of Lemma \ref{Cota- I(w)} and estimating each convolution term separately yields
\begin{align}\label{R6}
&R^6(w)\leq
C(\omega)\Bigg[
1
+ h^{1-\beta}
+ h^{H-\rho-\beta}
+ h^{1-\beta}\widetilde F(C_h(w))
+ \widetilde F(C_h(w)) w^{1-\beta}
\Big) \nonumber \\
&+\! 
\int_{0}^{C_h(w)}\!\!\! (w-u)^{-\beta}\!\!
\int_0^{C_h(u)}\!\!\! (u-v)^{-\beta-1}\!\!
\int_{-C_h(v)}^{0}\!\!
\big|\widetilde X^h(C_h(u)+C_h(s))
-\widetilde X^h(C_h(v)+C_h(s))\big|
\,ds\,dv\,du\Bigg].
\end{align}

Finally, from \eqref{R1}, \eqref{R2}, \eqref{R3}, \eqref{R4}, \eqref{R5}, and  \eqref{R6}
we obtain the result.

\end{proof}

{\bf Proof of Corollary \ref{Cor-cota-nualart}}

Assume Hypothesis \ref{HM2} on $\sigma$ and suppose that
$|x_i| \le C$, $|y_i|\le C$, $i=1,\dots,4$. Then
\begin{align*}
&|\sigma(t_1,x_1,y_1)-\sigma(t_1,x_2,y_2)
-\sigma(t_2,x_3,y_3)
+\sigma(t_2,x_4,y_4)|
\\
\le&
L_2 |x_1-x_2-x_3+x_4|
+ L_2 |y_1-y_2-y_3+y_4|+ L_1 |x_1-x_2|\,|t_1-t_2|+ L_1 |y_1-y_2|\,|t_1-t_2|
\\
&
+ L_2 |x_1-x_2|
\big(
|x_2-x_4|
+ |x_1-x_3|
+ |y_2-y_4|
+ |y_1-y_3|
\big)
\\
&+ L_2 |y_1-y_2|
\big(
|x_2-x_4|
+ |x_1-x_3|
+ |y_2-y_4|
+ |y_1-y_3|
\big).
\end{align*}

\begin{proof}
Using the mean value theorem in several variables,
\[
\sigma(t,x_1,y_1)-\sigma(t,x_2,y_2)
=
\int_0^1
(x_1-x_2,y_1-y_2)
\cdot
\nabla \sigma
\big(
t,
\xi(\lambda),
\eta(\lambda)
\big)
\, d\lambda,
\]
where $
\xi(\lambda)=(1-\lambda)x_2+\lambda x_1
$ and $\eta(\lambda)=(1-\lambda)y_2+\lambda y_1.
$
Applying this identity at times $t_1$ and $t_2$ and subtracting,
we obtain
\begin{align*}
&\sigma(t_1,x_1,y_1)-\sigma(t_1,x_2,y_2) -\sigma(t_2,x_3,y_3)
+\sigma(t_2,x_4,y_4)\\
=&
\int_0^1
(x_1-x_2,y_1-y_2)
\cdot
\nabla \sigma(t_1,\xi_1,\eta_1)
\, d\lambda
-
\int_0^1
(x_3-x_4,y_3-y_4)
\cdot
\nabla \sigma(t_2,\xi_2,\eta_2)
\, d\lambda.
\end{align*}

Adding and subtracting suitable terms and using the Lipschitz
assumptions on $\nabla \sigma$, we obtain
\begin{align*}
&|\sigma(t_1,x_1,y_1)-\sigma(t_1,x_2,y_2) -\sigma(t_2,x_3,y_3)
+\sigma(t_2,x_4,y_4)|\\
\le&
C
\Big(
|x_1-x_2-x_3+x_4|
+
|y_1-y_2-y_3+y_4|
\Big) +
C |t_1-t_2|
\big(
|x_1-x_2|
+
|y_1-y_2|
\big)   \\
&+
C (|x_1-x_2|+|y_1-y_2|)
\big(
|x_2-x_4|
+
|x_1-x_3|
+
|y_2-y_4|
+
|y_1-y_3|
\big),
\end{align*}
which proves the result.
\end{proof}

\section{Technical Estimates for the Solution}\label{Tec-Est-Sol}                                                                                                                                                             
In this section, we prove the Proposition \ref{cotasapriori}. First, we establish some technical lemmas, which provide a priori estimates for the process $X$. Their proofs are deferred to the end of this section.

Let us introduce the following notation. For $0 < u <v \leq T $ and $s \in [-r,0]$,
\begin{align}
\label{notation}
    |X^*(u)| &:= \displaystyle \sup_{-r \leq z \leq  u} |X(z)|. \nonumber \\
        \varphi_{u,v}^X  &:=  \frac{ |X(u)-X(v)|}{(u-v)^{\beta+1}}. \nonumber\\ 
 \Gamma_{u,v}(s)   &:=    \frac{|X(u+s) - X(v+s)|}{(u-v)^{\beta+1}} 
\end{align}
  

\begin{lemma}\label{DifX-Casos}
Let $0 < u <v \leq T $ and $s \in [-r,0]$.
Then, 
\begin{align*}
   & \int_0^u \int_{-r}^0  \frac{|X(u+s) - X(v+s)|}{(u-v)^{\beta+1}} ds{d}v  \\
   \leq & 
    C\left( 1  + \int_0^u \frac{|X^*(v)|}{v^{-\beta}} dv+\int_0^u \int_{-v}^{0}  \frac{|X(u+s) - X(v+s)|}{(u-v)^{\beta+1}} ds dv \right). 
\end{align*}

\end{lemma}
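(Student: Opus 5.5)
The plan is to mirror the case analysis of Lemma \ref{Cota- I(w)} (Cases 1--4 there), now for the exact solution $X$ instead of $\widetilde X^h$. In the integral, $v$ runs over $[0,u]$, so effectively $0\le v<u$ and the weight is $(u-v)^{-\beta-1}$. For fixed $v$, I split the $s$-integral over $[-r,0]$ into three pieces according to the signs of $u+s$ and $v+s$:
\[
[-r,-u],\qquad [-u,-v],\qquad [-v,0].
\]
The last piece, $s\in[-v,0]$, is kept untouched; it is exactly the third term on the right-hand side of the statement. So the work reduces to showing that the first two pieces are bounded by $C\bigl(1+\int_0^u |X^*(v)|\,v^{\beta}\,dv\bigr)$, reading the displayed weight $1/v^{-\beta}$ literally as $v^{\beta}$.

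\emph{First piece} ($s\in[-r,-u]$). Here both $u+s$ and $v+s$ lie in $[-r,0]$, so the integrand involves only $\phi$. By Hypothesis $\phi$,
\[
|\phi(u+s)-\phi(v+s)|\le L(u-v)^{H-\rho}.
\]
The contribution is then at most
\[
rL\int_0^u (u-v)^{H-\rho-\beta-1}\,dv\le C,
\]
since $H-\rho-\beta>0$. This gives the constant $1$.

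\emph{Second piece} ($s\in[-u,-v]$). Here $u+s\ge 0\ge v+s$, so I insert $X(0)=\phi(0)$:
\[
|X(u+s)-X(v+s)|\le |X(u+s)-X(0)|+|\phi(0)-\phi(v+s)|.
\]
The $\phi$ term is at most $L|v+s|^{H-\rho}\le L(u-v)^{H-\rho}$, because $|v+s|\le u-v$ on this interval. The $X$ term I would not bound by Hölder continuity, since the aim is to avoid using $C(\omega)$ from Theorem \ref{texistenciax}. Instead I bound it crudely by $|X(u+s)|+|X(0)|\le 2|X^*(u+s)|$ and use that the $s$-interval has length $u-v$. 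This gives
\[
\int_0^u (u-v)^{-\beta-1}\!\int_{-u}^{-v} 2|X^*(u+s)|\,ds\,dv
\le 2\int_0^u (u-v)^{-\beta}\,|X^*(u)|\,dv .
\]
It then has to be rewritten in the form $\int_0^u |X^*(v)|\,v^{\beta}\,dv$ (or the paper's intended weight).

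\textbf{Main obstacle.} The delicate step is this last conversion: turning the pointwise $|X^*(u)|$, or the inner integral $\int_{-u}^{-v}|X^*(u+s)|\,ds=\int_0^{u-v}|X^*(w)|\,dw$, into the integrated quantity claimed in the statement. My first attempt is Fubini in $(v,w)$ with $w\in[0,u-v]$. This turns the term into
\[
\int_0^u |X^*(w)|\int_0^{u-w}(u-v)^{-\beta-1}\,dv\,dw\le \tfrac1\beta\int_0^u |X^*(w)|\,w^{-\beta}\,dw .
\]
So I expect the natural weight to be $w^{-\beta}$, which matches the displayed $1/v^{-\beta}$ if that is read as a typo for $v^{-\beta}$. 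The alternative is to bound by $\sup$ and absorb the resulting $u^{1-\beta}|X^*(u)|$ term later in the Gronwall step of Proposition \ref{cotasapriori}. Summing the three pieces then gives the lemma.
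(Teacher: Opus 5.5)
Your proposal is correct and follows the paper's proof: you use the same split of $[-r,0]$ into $[-r,-u]$, $[-u,-v]$, $[-v,0]$, H\"older continuity of $\phi$ on the first piece, insertion of $X(0)$ on the second, and you keep the third piece as is. The only difference is on the second piece: the paper uses $0\le u+s\le u-v$ to bound $|X(u+s)-X(0)|\le 2|X^*(u-v)|$ and then substitutes $w=u-v$, instead of your Fubini step. Both routes give the weight $v^{-\beta}$, which confirms that the $1/v^{-\beta}$ in the display is a typo; the paper itself writes $|X^*(v)|v^{-\beta}$ when it invokes this lemma in Lemma \ref{lema6-3}.
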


\begin{lemma}\label{CA-XU-XV}
Let $0\leq v <u \leq T$ and $\varphi_{u,v}^X$ be defined by \eqref{notation}, then
\begin{align*} 
\int_0^u \varphi_{u,v}^X dv  
\leq& C\left( 1 + \int_0^u   |X^*(a)| (u-a)^{-2\beta}  da  +
 \int_{0}^{u} (u-a)^{-\beta}\int_{0}^{a}
\varphi_{a,z}^X dz da \right. \nonumber \\ 
& \quad +
\left.\int_0^u (u-a)^{-\beta}  \int_0^a \int_{-a}^0 \Gamma_{a,z}(s) ds dz da \right).
\end{align*}
\end{lemma}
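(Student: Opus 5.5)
The plan is to mirror, for the exact solution $X$, the argument already carried out for the Euler scheme in Lemma \ref{X-CHU-v}, but without the discretization operator $C_h$. This makes several terms disappear or simplify.

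We start from the integral equation \eqref{eq1}. For $0\le v<a$ we write
\[
X(a)-X(v)=\int_v^a b(w,X(w),Y(w))\,dw+\int_v^a \sigma(w,X(w),Y(w))\,dB^H(w).
\]
We bound the stochastic integral by the fractional estimate \eqref{eqcotaintegral} on $[v,a]$. For the resulting terms we use the linear growth \eqref{eq5}, and the Lipschitz properties \eqref{eq3}--\eqref{eq4} for the increments of $\sigma$. This yields
\[
|X(a)-X(v)|\le C\Big((a-v)^{1-\beta}+\int_v^a \frac{1+|X(w)|+|Y(w)|}{(w-v)^{\beta}}dw+\int_v^a\!\!\int_v^w \frac{|w-z|+|X(w)-X(z)|+|Y(w)-Y(z)|}{(w-z)^{\beta+1}}dz\,dw\Big).
\]
We control $|Y(w)|\le rK_3(1+|X^*(w)|)$ directly from the growth bound \eqref{HK1}.

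For the increment of $Y$, Lemma \ref{lemmaY} is too crude, since it is not localized in $X$-increments. Instead we use the analogue of Lemma \ref{cotadeDYtilde-DXtilde} for the exact functional:
\[
|Y(w)-Y(z)|\le rK_1|w-z|+K_2\int_{-r}^0 |X(w+s)-X(z+s)|\,ds.
\]
This is obtained in the same way from Hypothesis 3, writing $Y(t)=\int_{-r}^0K(t,s,X(t+s))ds$.

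Next we multiply by $(u-v)^{-\beta-1}$, integrate in $v$ over $[0,u]$, and exchange the order of integration as in the proof of Lemma \ref{X-CHU-v}. The terms are handled as follows.
\begin{itemize}
\item The deterministic term gives $\int_0^u (u-v)^{-2\beta}dv\le C$, using $\beta<1/2$.
\item The growth terms reduce, via the kernel identity \eqref{Co}, to $C\int_0^u |X^*(a)|(u-a)^{-2\beta}da$ plus a constant.
\item The double integrals are handled by $\int_0^z (u-v)^{-\beta-1}dv\le \beta^{-1}(u-z)^{-\beta}\le\beta^{-1}(u-a)^{-\beta}$, which produces $\int_0^u (u-a)^{-\beta}\int_0^a \varphi^X_{a,z}\,dz\,da$ together with the delay term $\int_0^u (u-a)^{-\beta}\int_0^a\int_{-r}^0\Gamma_{a,z}(s)\,ds\,dz\,da$.
\end{itemize}

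The main obstacle is converting $\int_{-r}^0$ into $\int_{-a}^0$ in the delay term. For this we split $[-r,0]$ at $-a$, following Lemma \ref{DifX-Casos} and Case 1 of Lemma \ref{Cota- I(w)}.
\begin{itemize}
\item For $s\le -z$, both arguments lie in $[-r,0]$, so $X=\phi$ there. The Hölder condition \eqref{eqphi} gives $\Gamma_{a,z}(s)\le L(a-z)^{H-\rho-\beta-1}$, which is integrable since $H-\rho>\beta$.
\item For $-a\le s\le -z$, the mixed case is handled by passing through $X(0)=\phi(0)$ and using the Hölder bound of Proposition \ref{cotasapriori}. This again gives an integrable power of $(a-z)$ times a constant.
\end{itemize}
After the $(u-a)^{-\beta}$ convolution, all these contributions are absorbed into the constant $C$, leaving exactly the $\int_{-a}^0\Gamma_{a,z}(s)\,ds$ term in the statement. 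The care needed is that the constant may depend on $\omega$ through the Hölder norm of $B^H$ and $\phi$; this is permitted by the paper's convention $C=C(\omega)$.
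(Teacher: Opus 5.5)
Your outline reproduces the paper's proof up to the last step. This includes the bound on $|X(a)-X(v)|$ from \eqref{eqcotaintegral} and \eqref{eq3}--\eqref{eq5}, the linear-growth control of $|Y|$, and the increment bound $|Y(w)-Y(z)|\le rK_1|w-z|+K_2\int_{-r}^0|X(w+s)-X(z+s)|\,ds$, which is exactly the paper's \eqref{CA-Y-2-}. It also includes the integration against $(u-v)^{-\beta-1}$ using \eqref{Co} and $\int_0^z(u-v)^{-\beta-1}dv\le\beta^{-1}(u-w)^{-\beta}$ for $z\le w$; the paper quotes (16) and (18) of Mishura for these. All of that is sound.

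\textbf{The gap: a circular citation.} The problem is in converting $\int_{-r}^0$ into $\int_{-a}^0$. Your second bullet controls the mixed strip $-a\le s\le -z$ with the H\"older bound of Proposition \ref{cotasapriori}. That proposition is proved in Appendix \ref{Tec-Est-Sol} from Lemmas \ref{DifX-Casos}, \ref{CA-XU-XV} and \ref{lema6-3}, so the argument is circular. Citing \eqref{deltaX} instead would not rescue it in any meaningful sense. With a H\"older bound on $X$ in hand the lemma collapses to $\int_0^u\varphi^X_{u,v}\,dv\le C(\omega)\int_0^u(u-v)^{H-\rho-\beta-1}dv\le C(\omega)$. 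The lemma only has content if no regularity of $X$ is used.

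\textbf{The fix.} The mixed-strip step is unnecessary. The right-hand side of the statement retains $\int_{-a}^0\Gamma_{a,z}(s)\,ds$, and the mixed strip $[-a,-z]$ lies inside $[-a,0]$. You only have to discard $s\in[-r,-a]$. There $a+s\le 0$ and $z+s<a+s\le 0$, so \eqref{eqphi} gives $\int_{-r}^{-a}\Gamma_{a,z}(s)\,ds\le Lr\,(a-z)^{H-\rho-\beta-1}$. This is integrable in $z$ because $H-\rho>\beta$, and it contributes only a constant after the $(u-a)^{-\beta}$ convolution.

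Your first bullet should accordingly read $s\le -a$, not $s\le -z$. For $-a<s\le -z$ one has $a+s>0$, so $X(a+s)$ is not given by $\phi$ there. With the second bullet deleted, your proof is complete and uses only \eqref{eqphi}, never any regularity of $X$.

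\textbf{Comparison with the paper.} Your corrected version is tidier than the paper's route, which goes through Lemma \ref{DifX-Casos}. There the mixed strip is handled by the sup bound $|X(u+s)-X(0)|\le 2|X^*(u-v)|$. That avoids circularity, but it produces an additional $X^*$-term involving $\int_0^a|X^*(z)|z^{-\beta}\,dz$. This term is silently dropped from the stated inequality. It can be absorbed into $\int_0^u|X^*(a)|(u-a)^{-2\beta}\,da$ by Chebyshev's integral inequality, since $X^*$ is nondecreasing, but the paper does not say so.
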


\begin{lemma}\label{lema6-3}
Let $\Gamma_{u,v}(s)$ defined by \eqref{notation}, then
\begin{align*}
      \int_0^u \int_{-r}^0 \Gamma_{u,v}(s) ds{d}v  
&\leq C\left( 1 
+ \int_0^u |X^*(v)|v^{-\beta} dv + 
\int_{0}^{u} |X^*(a)| (u-a)^{-2\beta} da.
 \right.\nonumber  \\
&+
\int_0^u (u-a)^{-\beta} \int_0^a
\varphi^X_{a,z} dz da +
\left. \int_0^u (u-a)^{-\beta}  \int_0^a \int_{-a}^0 \Gamma_{a,z}(s) ds dz da \right).
\end{align*}
\end{lemma}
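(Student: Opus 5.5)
We need to prove Lemma \ref{lema6-3}, the bound for $\int_0^u\int_{-r}^0\Gamma_{u,v}(s)\,ds\,dv$. The plan is to combine Lemma \ref{DifX-Casos} with a direct estimate of the remaining ``interior'' term, namely the part where $v+s\ge 0$. That interior term is then handled using the integral equation \eqref{eq1} and the same mechanism that drives Lemma \ref{CA-XU-XV}.

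\textbf{Step 1 (reduction).} Lemma \ref{DifX-Casos} already absorbs the portion of $s\in[-r,-v]$ on which at least one argument lies in $[-r,0]$; there the Hölder bound \eqref{eqphi} on $\phi$ and the bound $|X(u+s)-X(0)|$ are used. Applying it gives
\[
\int_0^u\int_{-r}^0\Gamma_{u,v}(s)\,ds\,dv\le C\Big(1+\int_0^u|X^*(v)|v^{-\beta}dv+\int_0^u\int_{-v}^0\Gamma_{u,v}(s)\,ds\,dv\Big).
\]
It therefore suffices to bound $\int_0^u\int_{-v}^0\Gamma_{u,v}(s)\,ds\,dv$ by the last three terms of the statement plus a constant.

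\textbf{Step 2 (interior increments).} For $s\in[-v,0]$ both $v+s$ and $u+s$ lie in $[0,T]$, so \eqref{eq1} gives
\[
X(u+s)-X(v+s)=\int_{v+s}^{u+s}b\,dw+\int_{v+s}^{u+s}\sigma\,dB^H.
\]
We bound the stochastic integral by \eqref{eqcotaintegral}. We then use the linear growth \eqref{eq5}, the Lipschitz bounds \eqref{eq3}--\eqref{eq4}, Lemma \ref{lemmaY}, and the bound $|Y(w)|\le rK_3(1+|X^*(w)|)$. This produces three kinds of terms:
\begin{itemize}
\item a term $\int_{v+s}^{u+s}(1+|X^*(w)|)(w-v-s)^{-\beta}dw$;
\item a term $\iint(w-z)^{-\beta}$ coming from the time-Lipschitz part;
\item terms $\iint |X(w)-X(z)|(w-z)^{-\beta-1}$ and $\iint|Y(w)-Y(z)|(w-z)^{-\beta-1}$.
\end{itemize}

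\textbf{Step 3 (integrating each kind of term).} We divide by $(u-v)^{\beta+1}$ and integrate in $s\in[-v,0]$ and $v\in[0,u]$, exchanging the order of integration as in \citep{Mishura2008}. The substitution $a=w-s$ shifts back to the unshifted time.
\begin{itemize}
\item The growth term gives $\int_0^u|X^*(a)|(u-a)^{-2\beta}da$, using \eqref{Co} and $\beta<1/2$.
\item The time-Lipschitz term contributes a constant.
\item The $X$-increment term becomes $\int_0^u(u-a)^{-\beta}\int_0^a\varphi^X_{a,z}\,dz\,da$, via $\int_0^z(u-v)^{-\beta-1}dv\le\beta^{-1}(u-z)^{-\beta}$.
\item For the $Y$-increment term we use Lemma \ref{lemmaY}. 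Its $|t-s|$ part gives a constant. The delay part $K_3\int|X|$ is handled by rewriting $Y(w)-Y(z)$ through the change $u\mapsto u+s'$, as in the proof of Lemma \ref{cotadeDYtilde-DXtilde}, which gives a term $\int_{-r}^0|X(w+s')-X(z+s')|ds'$. Splitting $s'$ at $-z$ as in Step 1, the part where the argument is negative is bounded by \eqref{eqphi} and $X^*$. The remaining part is exactly $\int_0^u(u-a)^{-\beta}\int_0^a\int_{-a}^0\Gamma_{a,z}(s)\,ds\,dz\,da$.
\end{itemize}

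\textbf{Main obstacle.} The hardest step is the last item: controlling the $Y$-increment inside the shifted window. Lemma \ref{lemmaY}, as stated, only yields $\int|X|$, which is too crude to recover a $\Gamma$-type term. If the shift argument with the Lipschitz property of $K$ in $x$ (Hypothesis 3) proves awkward, the fallback is to bound the $Y$-increment using only Lemma \ref{lemmaY}. Because its $\int_{s-r}^{t-r}|X|$ part is at most $|t-s|\,|X^*|$, this gives a contribution $C\int_0^u|X^*(a)|(u-a)^{-2\beta}da$, which is already among the allowed terms. So I would try this fallback first. The $\Gamma$ term on the right-hand side is then needed only for the consistency of the coupled Gronwall system.
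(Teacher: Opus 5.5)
Your proposal is correct. Its skeleton is the paper's: reduce with Lemma~\ref{DifX-Casos}, write the interior increment through \eqref{eq1}, bound the stochastic integral by \eqref{eqcotaintegral}, and integrate out $(v,s)$ by Fubini.

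\textbf{Where you match the paper.} The route in the last bullet of your Step~3 is also the paper's, but you attribute it wrongly. The term $\int_{-r}^0|X(a+s')-X(z+s')|\,ds'$ does not come from Lemma~\ref{lemmaY}. It comes from Hypothesis~3 applied directly to $Y(a)=\int_{-r}^0K(a,s',X(a+s'))\,ds'$, which is \eqref{CA-Y-2-}. A second application of Lemma~\ref{DifX-Casos} then produces the $\Gamma_{a,z}$ term, using $\int_{-z}^0\le\int_{-a}^0$.

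\textbf{Where you differ.} The fallback you say you would try first is genuinely different, and it works. The memory is distributed and $K$ is Lipschitz in its first two arguments, so $Y$ is Lipschitz in time: $|Y(w)-Y(z)|\le C(1+|X^*(w)|)(w-z)$. The $Y$-increment then contributes at most $C\int_{v+s}^{u+s}(1+|X^*(w)|)(w-v-s)^{1-\beta}dw$. This is dominated by your growth term and lands in $C(1+\int_0^u|X^*(a)|(u-a)^{-2\beta}da)$. No $\Gamma$ term is generated and Lemma~\ref{DifX-Casos} is used only once, so you prove a stronger inequality.

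What the paper's route buys is parallelism with the scheme. The Riemann sum \eqref{AproxY} samples $\tilde X^h$ at $t+t_i$, so $\tilde Y$ has no uniform time-Lipschitz bound at scales below $h$. $\Gamma$-type terms are therefore unavoidable from Lemma~\ref{cotadeDYtilde-DXtilde} onward, and the paper keeps the same structure for $X$.

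\textbf{Cautions.} First, do not rely on the paper's proof of Lemma~\ref{lemmaY}. Its decomposition of $Y(t)-Y(s)$ drops the window $[s,t]$ and evaluates $K$ with second argument $u-s>0$, outside $[-r,0]$. Split instead into the overlap $[t-r,s]$ and the two end windows $[s,t]$ and $[s-r,t-r]$. This gives $|Y(t)-Y(s)|\le 2rK_1|t-s|+K_3\int_s^t(1+|X|)+K_3\int_{s-r}^{t-r}(1+|X|)\le C(1+|X^*(t)|)|t-s|$, which is exactly what your fallback needs.

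Second, the substitution $a=w-s$ does not ``shift back'' for the $X$-increment; it moves $s$ into the arguments of $X$. For that term, keep $0\le z\le w\le u$ as outer variables and integrate out $s\in[w-u,0]$, $v\in[-s,z-s]$, as in \eqref{lem15-1}. This gives a bounded factor times $\int_0^u\int_0^w\varphi^X_{w,z}\,dz\,dw$.

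\textbf{A point in your favour.} Keeping the weight $(w-v-s)^{-\beta}$ on the constant part of the growth term is necessary. The paper's bound $|X(u+s)-X(v+s)|\le C(1+\cdots)$ loses the factor $(u-v)^{1-\beta}$. Taken literally, that constant makes $\int_0^u(u-v)^{-\beta-1}dv$ diverge.
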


{\bf Proof of Proposition \ref{cotasapriori}}

\begin{proof}
For $0< t\leq T,$ we are interested in estimating $|X(t)|$. From \eqref{eq1},
\begin{align*}
|X(t)|&=\left|X(0) + \int_0^t b(u,X(u),Y(u)){d}u + \int_0^t \sigma(u,X(u),Y(u)){d}B^H(u)\right| \\
&\leq |X(0)| + \int_0^t |b(u,X(u),Y(u))|{d}u + \nonumber \\
&\quad C(\omega) \left(\int_0^t \dfrac{|\sigma(u,X(u),Y(u))|}{u^{\beta}}{d}u + \int_0^t\int_0^u \dfrac{|\sigma(u,X(u),Y(u))-\sigma(v,X(v),Y(v))|}{(u-v)^{\beta+1}}{d}v{d}u\right).
\end{align*}
Let us denote
\begin{align}\label{CA-Is}
    I_1&=\int_0^t |b(u,X(u),Y(u))|{d}u,\\
    I_2&=\int_0^t \dfrac{|\sigma(u,X(u),Y(u))|}{u^{\beta}}{d}u, \nonumber \\
    I_3&=\int_0^t\int_0^u \dfrac{|\sigma(u,X(u),Y(u))-\sigma(v,X(v),Y(v))|}{(u-v)^{\beta+1}}{d}v{d}u.\nonumber 
\end{align}
For $I_1$ and $I_2$ we use the linear growth condition on the drift $b$ and  the diffusion $\sigma$ \eqref{eq5} to obtain 
\begin{align}\label{CA-I1}
    I_1 &\leq L_3 \left( t + \int_0^t |X(u)| du + \int_0^t |Y(u)|du \right) \\
     &\leq L_3 
    \left( t + T^\beta \int_0^t \frac{|X(u)|}{u^\beta} du + \int_0^t |Y(u)|du \right) \nonumber 
\end{align}
and 
\begin{align}\label{CA-I2}
    I_2 &\leq \int_0^t \dfrac{L_3(1+|X(u)|+|Y(u)|)}{u^{\beta}}{d}u.
\end{align}

On the other hand, using the linear growth condition on $K$ \eqref{HK1} we obtain 
\begin{align*}
    |Y(u)|
    &\leq \int_{-r}^0 |K(u,s,X(u+s))|{d}s \leq \int_{-r}^0 K_3(1+|X(u+s)|){d}s\\
    &\leq \int_{-r}^{-u} K_3(1+|X(u+s)|){d}s + \int_{-u}^{0} K_3(1+|X(u+s)|){d}s.
    \nonumber 
\end{align*}
In the interval $[-r,-u]$;  $-r+u < s+u <0$. Similarly, in the interval $[-u,0]$; 
$0 < u+s <u$. Then 
\begin{align*}
    |Y(u)|
    &\leq K_3r + \int_{-r}^{-u} K_3|\phi(u+s)|{d}s + K_3\int_{-u}^{0} |X(u+s)|{d}s.
\end{align*}
This leads to
\begin{align}
\label{CA-Y-3}
    |Y(u)|
    &\leq K_3r + K_3(u-r) \sup_{s\in [-r,-u]}|\phi(u+s)| + K_3\int_{-u}^{0} |X(u+s)|{d}s.
\end{align}

Thus, from \eqref{CA-I1}, \eqref{CA-I2}, and \eqref{CA-Y-3}, and considering $C(\omega)$ a generic random variable and may change from line to line, we obtain that
\begin{align}
\label{I1-2}
    I_1 + I_2  
     &\leq C(\omega)\!
    \left(\! t \!+\! T^\beta\! \int_0^t \frac{|X(u)|}{u^\beta} du + \int_0^t (u-r)\! \sup_{s\in [-r,-u]}|\phi(u+s)| + K_3\int_0^t \int_{-u}^{0} \frac{|X(u+s)|}{u^\beta}{d}s du \right) \nonumber\\  
     &\leq C(\omega)
    \left( (t-r)(1 +\sup_{s\in [-r,0]}|\phi(u+s)| ) + \int_0^t \frac{|X(u)|}{u^\beta} du  + \int_0^t \int_{-u}^{0} \frac{|X(u+s)|}{u^\beta}{d}s du \right) \nonumber \\ 
 &\leq C(\omega)
    \left( (t-r)(1 +\sup_{s\in [-r,0]}|\phi(u+s)| ) + \int_0^t \frac{|X(u)|}{u^\beta} du  + \int_0^t  |X^*(u)| u^{1-\beta} du \right) \nonumber\\ 
    &\leq C(\omega)
    \left( (t-r)(1 +\sup_{s\in [-r,-u]}|\phi(u+s)| ) + \int_0^t \frac{|X(u)|}{u^\beta} du  + \int_0^t  \frac{|X^*(u)|}{u^\beta} du \right) \nonumber \\ 
     &\leq C(\omega)
    \left(1 + \int_0^t  \frac{|X^*(u)|}{u^\beta} du \right). 
\end{align}
where $|X^*(u)|$ is given by \eqref{notation}.

Let us now analyze $I_3$ in \eqref{CA-Is}
\begin{align}\label{CA-I3}
    I_3  
     &= \int_0^t\int_0^u \dfrac{|\sigma(u,X(u),Y(u))-\sigma(v,X(v),Y(v))|}{(u-v)^{\beta+1}}{d}v{d}u \nonumber \\
    &\leq \int_0^t\int_0^u \dfrac{L_1 |u-v| + L_2 |X(u)-X(v)| + L_2 |Y(u)-Y(v)| }{(u-v)^{\beta+1}}{d}v{d}u. 
\end{align}

Now, using the Lipschitz condition on 
$K$ \eqref{HK1}, 
\begin{align}
\label{CA-Y-2-}
    |Y(u)-Y(v)| 
    &\leq \int_{-r}^0 |K(u,s,X(u+s))-K(v,s,X(v+s))|{d}s \nonumber \\ 
    &\leq \int_{-r}^0 K_1|u-v| +K_2 |X(u+s)-X(v+s)|{d}s \nonumber \\
    &\leq C |u-v| + C \int_{-r}^{0} |X(u+s)-X(v+s)|{d}s.
\end{align}

Therefore  \eqref{CA-I3} becomes
\begin{align}\label{CA-I3-1}
    I_3  
     &\leq C T^{1-\beta} + C\int_0^t\int_0^u \frac{ |X(u)-X(v)|}{(u-v)^{\beta+1}} dvdu + 
     C \int_0^t \int_0^u \int_{-r}^0  \frac{|X(u+s) - X(v+s)|}{(u-v)^{\beta+1}} ds{d}v{d}u.
\end{align}

Combining \eqref{I1-2} and \eqref{CA-I3-1} together with the notation in \eqref{notation}, we obtain
\begin{align*}
&|X(t)| \leq C(\omega)   \left(1 + \int_0^t  \frac{|X^*(u)|}{u^\beta} du      + \int_0^t  \int_0^u \varphi_{u,v}^X dv du     + \int_0^t  \int_0^u \int_{-r}^0 \Gamma_{u,v}(s) dsdv du     \right).\nonumber
\end{align*}


The final part of the proof of Proposition \ref{cotasapriori} follows the same argument used in the proof of Theorem \ref{teoCotasapriori} using Lemmas \ref{DifX-Casos}, \ref{CA-XU-XV} and \ref{lema6-3} given above. In particular, once the previous estimates are obtained, the conclusion is derived by applying the Gronwall inequality in the same way as before. Since the same bounds and conditions hold, the argument can be repeated without modification, and therefore we omit the details.
\end{proof}


{\bf Proof of Lemma \ref{DifX-Casos}}

\begin{proof}
We assume that $-r < -u < -v < 0$. To facilitate the analysis, we divide the integral into the interval $[-r,0]$ according to the relative positions of the points $-r$, $-u$, and $-v$. This decomposition allows us to treat each contribution separately.

Accordingly, we partition the interval $[-r,0]$ into the three subintervals
\[
[-r,-u], \quad [-u,-v], \quad [-v,0].
\]
We analyze each case separately.
\medskip

\noindent\textbf{Case I:} $s \in [-r,-u]$.
In this case, $-r \leq s \leq -u$. Adding $u$ yields
\[
-r+u \leq s+u \leq 0.
\]
Similarly, adding $v$ gives
\[
-r+v \leq s+v \leq v-u < 0.
\]
Therefore, we obtain $X(u+s) = \phi(u+s)$,  $X(v+s) = \phi(v+s)$  and
\begin{equation*}
    |X(u+s) - X(v+s)|
    = |\phi(u+s) -\phi(v+s)| \leq |u-v|^{H-\rho }.
\end{equation*}

In this way 
\begin{align*}
\int_{-r}^{-u} \!\! \frac{|X(u+s) - X(v+s)|}{(u-v)^{\beta+1}} ds& \leq\!\!
\int_{-r}^{-u} \!\! \frac{|u-v|^{H-\rho}}{(u-v)^{\beta+1}} ds \leq |u-v|^{H-\rho -\beta -1}(r-u). 
\end{align*}
Then
\begin{align*}
\int_0^u \int_{-r}^{-u}  \frac{|X(u+s) - X(v+s)|}{(u-v)^{\beta+1}} ds dv & \leq (r-u)\int_0^u |u-v|^{H-\rho -\beta -1} dv \leq (r-u) u^{H-\rho-\beta}.
\end{align*}

{\bf{Case II:}}
$s \in [-u,-v]$ \\
In this case, $-u < s < -v$. Thus, by adding $u$, we obtain $0 < s+u < u-v$.
Similarly, by adding $v$, we have $-u+v < s+v < 0$.
That is, one quantity is positive and the other is negative. Therefore,
$X(v+s) = \phi(v+s)$, and
\begin{align}\label{CA-C2}
    |X(u+s) - X(0) + X(0) - X(v+s)|
    &= |X(u+s) -X(0)| + |X(0) - \phi(v+s)| \nonumber \\ &\leq C(\omega) \left(|X(u+s) -X(0)| + |v+s|^{H-\rho } \right).
\end{align}
In this way, since $-u < s$, it follows that $-s < u$. Therefore, $0 <-v - s < u - v$. Thus, 
\begin{align*}
\int_{-u}^{-v}  \frac{|X(u+s) - X(v+s)|}{(u-v)^{\beta+1}} ds& \leq
\int_{-u}^{-v}  \frac{|X(u+s) - X(0)|}{(u-v)^{\beta+1}} ds + \int_{-u}^{-v}  \frac{|v+s|^{H-\rho}}{(u-v)^{\beta+1}} ds \nonumber \\ &\leq
 \int_{-u}^{-v}  \frac{|X(u+s) - X(0)|}{(u-v)^{\beta+1}} ds + \int_{-u}^{-v}  (u-v)^{H-\rho-\beta-1} ds \nonumber \\ 
 &\leq
 \int_{-u}^{-v}  2\frac{|X^*(u-v)|}{(u-v)^{\beta+1}} ds + (u-v)^{H-\rho - \beta}  \nonumber \\ 
 &\leq
 2\frac{|X^*(u-v)|}{(u-v)^{\beta}} + (u-v)^{H-\rho - \beta}.
\end{align*}
Then
\begin{align*}
\int_0^u \int_{-u}^{-v}  \frac{|X(u+s) - X(v+s)|}{(u-v)^{\beta+1}} ds dv & \leq  C \left( \int_0^u
\frac{|X^*(u-v)|}{(u-v)^{\beta}} dv \right)  + \int_0^u (u-v)^{H-\rho - \beta} dv \\
&\leq 
C\left( \int_0^u |X^*(v)|v^{-\beta} dv + u^{H-\rho - \beta +1} \right).
\end{align*}

{\bf{Caso III:}}
$s \in [-v,0]$ \\
In this case, $-v < s < 0$. Adding $u$ to the inequality, we obtain $u - v < s + u < u$. Similarly, by adding $v$, we get $0 < s + v < v$. Hence, both quantities are positive, and the integrals remain unchanged, i.e., no modification of the integration domains is required,
 \begin{align}\label{CA-C3} 
 \int_0^u \int_{-v}^{0}  \frac{|X(u+s) - X(v+s)|}{(u-v)^{\beta+1}} ds dv.
\end{align}

Combining the three cases, we obtain the result.
\end{proof}
{\bf Proof of Lemma \ref{CA-XU-XV}}
\begin{proof}
We will first study $|X(u)-X(v)|$, for $v<u$
\begin{align*}
|X(u)-X(v)| \leq &  C
\int_v^u  (1 + |X(a)| + |Y(a)|) da +  C
\int_v^u  \frac{1 + |X(a)| + |Y(a)|}{(a-v)^\beta } da  \nonumber \\
&+
\int_v^u  \int_v^a \frac{|\sigma(a,X(a), Y(a)) -\sigma(z,X(z), Y(z))|}{(a-z)^{\beta +1} } dz da. 
\end{align*}

From \eqref{CA-Y-3} we  have that 
\begin{align*}
    |Y(a)|
    &\leq K_3r + K_3(a-r) \sup_{s\in [-r,-a]}|\phi(a+s)| + K_3\int_{-a}^{0} |X(a+s)|{d}s.
\end{align*}
Thus
\begin{align}\label{CA-XU-XV-2} 
&|X(u)-X(v)| \nonumber \\
\leq &  C
\int_v^u  \left(1 + |X(a)| + K_3r + K_3(a-r) \sup_{s\in [-r,-a]}|\phi(a+s)| + K_3\int_{-a}^{0} |X(a+s)|{d}s \right) da \nonumber \\
&+ C
\int_v^u  \frac{1 + |X(a)| + K_3r + K_3(a-r) \sup_{s\in [-r,-a]}|\phi(a+s)| + K_3\int_{-a}^{0} |X(a+s)|{d}s}{(a-v)^\beta } da  \nonumber \\
&+
\int_v^u  \int_v^a \frac{|\sigma(a,X(a), Y(a)) -\sigma(z,X(z), Y(z))|}{(a-z)^{\beta +1} } dz da \nonumber\\
 \leq  &C \left(1 + \int_v^u |X^*(a)| da +  \int_v^u \frac{|X^*(a)|}{(a-v)^\beta} da \right. +
\left.\int_v^u  \int_v^a \frac{|\sigma(a,X(a), Y(a)) -\sigma(z,X(z), Y(z))|}{(a-z)^{\beta +1} } dz da \right)\nonumber \\ 
\leq & C \left(1  +  \int_v^u \frac{|X^*(a)|}{(a-v)^\beta} da \right. +
\left.\int_v^u  \int_v^a \frac{|\sigma(a,X(a), Y(a)) -\sigma(z,X(z), Y(z))|}{(a-z)^{\beta +1} } dz da \right).
\end{align}
Hence, 
\begin{align*}
\int_0^u \frac{ |X(u)-X(v)|}{(u-v)^{\beta+1}} dv  
 \leq & C  + C \left( \int_0^u   \int_v^u \frac{|X^*(a)|}{(a-v)^\beta   (u-v)^{\beta +1} } da \right. dv\nonumber \\
&+
\left. \int_0^u \int_v^u  \int_v^a \frac{|\sigma(a,X(a), Y(a)) -\sigma(z,X(z), Y(z))|}{(a-z)^{\beta +1} (u-v)^{\beta +1}} dz da dv \right).
\end{align*}
Using inequality (16) in \citep{Mishura2008} and recalling that $\varphi_{u,v}$ is defined in \eqref{notation},   we obtain 
\begin{align*}
\int_0^u  \varphi_{u,v}^X dv  \leq & C  + C \left( \int_0^u |X^*(a)| (u-a)^{-2\beta}  da  \right. \nonumber \\
&+
\left. \int_0^u \int_v^u  \int_v^a \frac{|\sigma(a,X(a), Y(a)) -\sigma(z,X(z), Y(z))|}{(a-z)^{\beta +1} (u-v)^{\beta +1}} dz da dv  \right)\\
 \leq & C  + C \left( \int_0^u   |X^*(a)| (u-a)^{-2\beta}  da  \right. +
\int_0^u \int_v^u  \int_v^a \frac{|X(a) -X(z)|}{(a-z)^{\beta +1} (u-v)^{\beta +1}} dz da dv \nonumber \\
& \qquad \quad + \left.
\int_0^u \int_v^u  \int_v^a \frac{|Y(a) -Y(z)|}{(a-z)^{\beta +1} (u-v)^{\beta +1}} dz da dv\right). 
\end{align*}

By making a change of integration variables as in \citep{Mishura2008} equation (18), we obtain

\begin{align*} 
\int_0^u \varphi_{u,v}^X dv   \leq & C  + C \left( \int_0^u   |X^*(a)| (u-a)^{-2\beta}  da +
 \int_{0}^{u} (u-a)^{-\beta}\int_{0}^{a}
\frac{|X(a)-X(z)|}{(a-z)^{\beta+1}} dz da   \right. \nonumber  \\
& \qquad \qquad
\left. +
\int_0^u (u-a)^{-\beta}  \int_0^a \frac{|Y(a) -Y(z)|}{(a-z)^{\beta +1}}  dz da \right)\nonumber
\end{align*}
\begin{align}
\label{CA-XU-XV-9}
= &C  + C \left( \int_0^u   |X^*(a)| (u-a)^{-2\beta}  da +
 \int_{0}^{u} (u-a)^{-\beta}\int_{0}^{a}
\varphi_{a,z}^X dz da   \right. \nonumber  \\
& \qquad \qquad\left.  
 +
\int_0^u (u-a)^{-\beta}  \int_0^a \frac{|Y(a) -Y(z)|}{(a-z)^{\beta +1}} dz da \right) \nonumber \\
\leq & C\left( 1 + \int_0^u   |X^*(a)| (u-a)^{-2\beta}  da  +
 \int_{0}^{u} (u-a)^{-\beta}\int_{0}^{a}
\varphi_{a,z}^X dz da \right. \nonumber \\ 
&\qquad \qquad +
\left.\int_0^u (u-a)^{-\beta}  \int_0^a \int_{-a}^0 \Gamma_{a,z}(s) ds dz da \right).
\end{align}
The last inequality follows from \eqref{CA-Y-2-} and Lemma \ref{DifX-Casos}.
\end{proof}

{\bf Proof of Lemma \ref{lema6-3}}
\begin{proof}
From Lemma \eqref{DifX-Casos}, we obtain the following
\begin{align}\label{lema6}
     & \int_0^u \int_{-r}^0 \Gamma_{u,v}(s) ds{d}v 
\leq  C\left( 1 
+ \int_0^u |X^*(v)|v^{-\beta} dv + \int_0^u \int_{-v}^{0}  \frac{|X(u+s) - X(v+s)|}{(u-v)^{\beta+1}} ds dv \right).
\end{align}
Following the same analysis as in \eqref{CA-XU-XV-2}, we obtain
\begin{align*}
    &  |X(u+s) - X(v+s)|  \nonumber \\ 
&\leq C \left(1+ \int_{v+s}^{u+s} \frac{|X^*(a)|}{ (a-(v+s))^{\beta}} da  + \int_{v+s}^{u+s} \int_{v+s}^a \frac{|\sigma(a,X(a), Y(a)) - \sigma(z,X(z),Y(z))|}{(a-z)^{\beta+1}} dzda \right. \nonumber \\
&\leq C \left(\!1+\! \int_{v+s}^{u+s}\!\! \frac{|X^*(a)|}{ (a-(v+s))^{\beta}} da \!+\!
 \int_{v+s}^{u+s}\! \int_{v+s}^a\!\! \frac{|X(a) -X(z)|}{(a-z)^{\beta+1}} dzda\!
 + \!
 \int_{v+s}^{u+s}\! \int_{v+s}^a \!\!\frac{|Y(a) -Y(z)|}{(a-z)^{\beta+1}} dzda \!\! \right).
\end{align*}
Writing this in \eqref{lema6}, we obtain the following
\begin{align}\label{lema6-1}
     &   \int_0^u \int_{-r}^0 \Gamma_{u,v}(s) ds{d}v  \nonumber \\
\leq &  C\left( 1 
+ \int_0^u |X^*(v)|v^{-\beta} dv + \int_0^u \int_{-v}^{0}  \frac{|X(u+s) - X(v+s)|}{(u-v)^{\beta+1}} ds dv \right) \nonumber \\
 \leq  & C\left(\! 1 \!
+\! \int_0^u |X^*(v)|v^{-\beta} dv + \int_0^u \int_{-v}^{0}  
\int_{v+s}^{u+s} \frac{|X^*(a)|}{ (a-(v+s))^{-\beta} (u-v)^{\beta +1}} da dsdv \right. \nonumber \\ 
& \qquad \left.+
 \int_0^u \int_{-v}^{0}   \int_{v+s}^{u+s} \int_{v+s}^a \frac{|X(a) -X(z)|+|Y(a) -Y(z)|}{(a-z)^{\beta+1} (u-v)^{\beta+1}} dzda ds dv \right). 
\end{align}
For the last integral in \eqref{lema6-1}, a change of variables followed by direct computation yields
 \begin{align}
\label{lem15-1}
 & \int_0^u \int_{-v}^{0}   \int_{v+s}^{u+s} \int_{v+s}^a \frac{|X(a) -X(z)|+|Y(a) -Y(z)|}{(a-z)^{\beta+1} (u-v)^{\beta+1}} dzda ds dv \nonumber\\
  =&  \int_{0}^{u} \int_{0}^{a} \int_{a-u}^{0} \int_{-s}^{z-s} 
\frac{|X(a)-X(z)|+|Y(a) -Y(z)|}{(a-z)^{\beta+1} (u-v)^{\beta+1}}  dv ds  dz  da\nonumber\\
\leq & C\int_{0}^{u} \int_{0}^{a} \int_{a-u}^{0} 
\frac{|X(a)-X(z)|+|Y(a) -Y(z)|}{(a-z)^{\beta+1} (u-(z-s))^{\beta}}  ds  dz  da\nonumber\\
\leq& C \int_{0}^{u} \int_{0}^{a} (u-z)^{1-\beta}
\frac{|X(a)-X(z)|+|Y(a) -Y(z)|}{(a-z)^{\beta+1} }   dz  da
\nonumber  \\
\leq& C  \int_{0}^{u} \int_{0}^{a} (u-a)^{-\beta}
\frac{|X(a)-X(z)|+|Y(a) -Y(z)|}{(a-z)^{\beta+1} }   dz  da,
\end{align}
the last inequality is due to $z <a$ implies $u-a < u-z$, and then $\frac{1}{u-z} < \frac{1}{u-a}$.
We study the second  integral of \eqref{lema6-1}, making a substitution, then changing the limits of the integral and taking into account that  $s$ is non-positive, we have 
\begin{align}
\label{lem15-2}
&\int_0^u \int_{-v}^{0}  
\int_{v+s}^{u+s} \frac{|X^*(a)|}{ (a-(v+s))^{-\beta} (u-v)^{\beta +1}} da dsdv\nonumber\\
=&\int_0^u \int_{-v}^{0}  
\int_{v}^{u} \frac{|X^*(a+s)|}{ (a-v)^{-\beta} (u-v)^{\beta +1}} da dsdv = \int_{0}^{u}\int_{-a}^{0}\int_{-s}^{a}
\frac{|X^*(a+s)|}{(a-v)^{\beta}(u-v)^{\beta+1}}\,dv\,ds\,da\nonumber\\
\leq & \int_{0}^{u}\int_{-a}^{0}\int_{0}^{a}
\frac{|X^*(a)|}{(a-v)^{\beta}(u-v)^{\beta+1}}\,dv\,ds\,da
\leq C\int_{0}^{u} \int_{-a}^{0}|X^*(a)| (u-a)^{-2\beta} dsda\nonumber\\
\leq& C\int_{0}^{u} |X^*(a)| (u-a)^{-2\beta} da.
\end{align}

Combining \eqref{lema6-1}, \eqref{lem15-1} and \eqref{lem15-2} yields
\begin{align*}
\int_0^u\! \int_{-r}^0 \! \Gamma_{u,v}(s) ds{d}v & 
 \leq C\left( 1 
+ \int_0^u |X^*(v)|v^{-\beta} dv + 
\int_{0}^{u} |X^*(a)| (u-a)^{-2\beta} da.
 \right.\nonumber  \\
&+\!
\int_0^u (u-a)^{-\beta}\!\! \int_0^a\!
\frac{|X(a) - X(z)|}{(a-z)^{\beta+1} }  dz da \!+\!
\left. \!\int_0^u \!\!(u-a)^{-\beta}\!\! \int_0^a\!
\frac{|Y(a) - Y(z)|}{(a-z)^{\beta+1} }  dz da \right).
\end{align*}
Finally, from \eqref{CA-Y-2-} and Lemma \ref{DifX-Casos},
\begin{align*}
      \int_0^u\! \int_{-r}^0 \!\Gamma_{u,v}(s) ds{d}v  
&\leq C\left( 1 
+ \int_0^u |X^*(v)|v^{-\beta} dv + 
\int_{0}^{u} |X^*(a)| (u-a)^{-2\beta} da.
 \right.\nonumber  \\
&+\!
\int_0^u\!\! (u-a)^{-\beta}\!\! \int_0^a\!
\frac{|X(a) - X(z)|}{(a-z)^{\beta+1} }  dz da \!+\!
\left. \int_0^u\! (u-a)^{-\beta}\!  \int_0^a \!\int_{-a}^0 \Gamma_{a,z}(s) ds dz da \right).
\end{align*}
\end{proof}

\section*{Funding}

\noindent
Alexander Abreu gratefully acknowledges financial support through a doctoral fellowship in Mathematics at the Pontificia Universidad Católica de Valparaíso (PUCV), Chile, and additional support from the Centro de Modelamiento Matemático (CMM), Chile.

\medskip
\noindent
Héctor Araya was supported by FONDECYT Project No. 11230051 and partially supported by the ECOS project ECOS210037 (C21E07) and the MathAmsud project AMSUD210023.

\medskip
\noindent
Lisandro Fermin was supported by the Aix-Marseille School of Economics (AMSE), funded by the French National Research Agency (ANR) under grant ANR-17-EURE-0020 and by the Excellence Initiative of Aix-Marseille University - A*MIDEX.

\medskip
\noindent
Johanna Garz\'on was partially supported by HERMES project 58557 and Mathamsud EXPLORE-SDE AMSUD240037.

\medskip
\noindent
Soledad Torres was partially supported by Fondecyt Regular project No. 1230807, Fondecyt Regular project No. 1221373, Mathamsud SMILE AMSUD230032, Mathamsud 
EXPLORE-SDE AMSUD240037 and Mathamsud SiJaVol AMSUD240024. 

\medskip
\noindent
This work was supported by Centro de Modelamiento Matemático (CMM) BASAL fund FB210005 for center of excellence from ANID-Chile.

\bibliographystyle{apalike}
\bibliography{Referencias}

\end{document}